\documentclass[11pt]{article}
\usepackage[a4paper,margin=1in]{geometry}
\usepackage[T1]{fontenc}
\usepackage{lmodern}
\usepackage{amsmath,amssymb,amsthm,mathtools,bm,mathrsfs,bbm}
\usepackage{microtype}
\usepackage{xcolor}
\usepackage{graphicx,booktabs,array,float,placeins,caption}
\usepackage{authblk}
\usepackage[round,authoryear]{natbib}
\usepackage[hidelinks]{hyperref}
\numberwithin{equation}{section}
\newtheorem{theorem}{Theorem}[section]
\newtheorem{lemma}[theorem]{Lemma}
\newtheorem{proposition}[theorem]{Proposition}
\newtheorem{corollary}[theorem]{Corollary}
\newtheorem{example}[theorem]{Example}
\theoremstyle{remark}

\newcommand{\Var}{\operatorname{Var}}

\newcommand{\tr}{\operatorname{tr}}
\newcommand{\diag}{\operatorname{diag}}
\newcommand{\dto}{\xrightarrow{d}}
\newcommand{\pto}{\xrightarrow{\mathbb{P}}}

\newcommand{\DAN}{\mathrm{DAN}}
\allowdisplaybreaks[2]
\title{When is Hotelling's $T^2$ asymptotically $F$-distributed?}
\author[1,2]{Yanpeng Li\thanks{Email: \texttt{20230256@hit.edu.cn}}}
\author[3]{Guangming Pan\thanks{Email: \texttt{gmpan@ntu.edu.sg}}}
\author[4]{Wang Zhou\thanks{Email: \texttt{wangzhou@nus.edu.sg}}}
\affil[1]{\small School of Mathematics, Harbin Institute of Technology, China}
\affil[2]{\small Faculty of Science, University of Macau, China}
\affil[3]{\small Division of Mathematical Sciences, Nanyang Technological University, Singapore}
\affil[4]{\small Department of Statistics and Data Science, National University of Singapore, Singapore}

\date{}
\begin{document}
\maketitle
\begin{abstract}
For Gaussian observations, a deterministic multiple of Hotelling's $T^2$ statistic has an exact $F$ distribution. This paper investigates when this approximation remains valid for an independent and identically distributed (i.i.d.) data matrix $\mathbf{X}\in \mathbb{R}^{p\times n}$ with a fixed, possibly heavy-tailed entry law $\xi$. Under symmetry and $\limsup_{n\rightarrow \infty} p/n<1$, we prove that approximation by the $F$ distribution in Kolmogorov distance is equivalent to belonging to the domain of attraction of the normal law. Both conditions are equivalent to relative concentration of the diagonal entries of the null-space projection $\mathbf{P}=\mathbf{I}_n-\mathbf{X}^{\top}(\mathbf{X}\mathbf{X}^{\top})^{-1}\mathbf{X}$. The central quantity $R=\mathbf{1}^{\top}\mathbf{P}\mathbf{1}$ measures the
geometric alignment between the averaging direction and the random row space of $\mathbf{X}$. For symmetric finite-variance entries, the sufficient approximation extends to every sequence $1\le p<n$, including fixed nullity $n-p$. As a consequence of the $F$ approximation, Hotelling's $T^2$ statistic exhibits different asymptotic distributions in different regimes, including the normal, chi-square, and inverse-chi-square limits depending on $p$ and $n$. Without symmetry, we establish a central limit theorem under the weak-$5/2$ tail condition $\lim_{x\to\infty}x^{5/2}\mathbb{P}(|\xi|>x)=0$ when $p/n\rightarrow \gamma\in (0,1)$, and obtain the $F$ approximation as a consequence. Centered standardized Pareto laws show that the exponent $5/2$ is sharp for this universal absolute-tail sufficient condition, although balanced-tail nonsymmetric laws can retain the approximation at the boundary, which reveals that the asymmetry of the tails matters for the $F$ approximation.
The main technical ingredients are a novel representation of the quadratic form associated with the averaging direction $\mathbf{1}^{\top}\mathbf{P}\mathbf{1}$, a conditional sign comparison under symmetry, and, in the absence of symmetry, a projection recursion identity and a row replacement comparison with a symmetrized law.
\end{abstract}


\section{Introduction}\label{sec:introduction}

Hotelling's $T^2$ statistic is a fundamental tool in multivariate statistical inference and has been widely used for hypothesis testing and confidence region construction; see, for example, \citet{Hotelling1931} and \citet{Anderson2003}. Let $\mathbf{x}_1,\ldots,\mathbf{x}_n\in\mathbb{R}^p$ be independent observations from $N_p(\mathbf{0},\boldsymbol{\Sigma})$, where $\boldsymbol{\Sigma}$ is positive definite and $1\le p<n$. Define
\begin{equation}
 \bar{\mathbf{x}}=\frac1n\sum_{j=1}^n\mathbf{x}_j,\qquad
 \mathbf{S}=\frac1n\sum_{j=1}^n(\mathbf{x}_j-\bar{\mathbf{x}})(\mathbf{x}_j-\bar{\mathbf{x}})^\top,
\end{equation}
and let
\begin{equation}
 T_n^2=n\bar{\mathbf{x}}^\top\mathbf{S}^{-1}\bar{\mathbf{x}}.
\end{equation}
Under these assumptions, the exact distribution of Hotelling's $T^2$ is given by
\begin{equation}\label{eq:intro-exact-F}
 \frac{n-p}{np}T_n^2\stackrel d=F_{p,n-p},\qquad
 F_{p,m}:=\frac{B_p/p}{A_m/m},
\end{equation}
where $A_m\sim\chi_m^2$ and $B_p\sim\chi_p^2$ are independent random variables. The factor in \eqref{eq:intro-exact-F} corresponds to the divisor $n$ in the definition of $\mathbf{S}$. In particular, if both $p$ and $n-p$ tend to infinity, the central limit theorem for the two chi-square variables gives
\begin{equation}
 \frac{(n-p)T_n^2/(np)-1}{\sqrt{2/p+2/(n-p)}}\dto N(0,1).
\end{equation}
Thus the classical Gaussian model supplies not only a normal approximation, but also an exact reference distribution for every $1\le p<n$.

For non-Gaussian observations, the validity of this reference depends on the distribution of the data and on the dimension. In the case $p=1$, \citet{GGM1997} proved that the Student $t$-statistic is asymptotically standard normal if and only if the underlying centered law belongs to the domain of attraction of the normal law, abbreviated as DAN. For fixed $p$, a related necessity result for the multivariate Student statistic based on symmetric random vectors was obtained by \citet{GineGotze2004}. In a high-dimensional setting, with $p/n\to\gamma\in(0,1)$, \citet{PanZhou2011} established a central limit theorem for Hotelling's $T^2$ statistic under a finite fourth moment. The limiting variance does not depend on the fourth moment of the entries. This suggests that the moment assumption may be stronger than necessary, but it does not identify the distributional condition that governs the approximation.

Heavy tails make this question substantially different from a direct application of the finite-fourth-moment theory. For example, extreme eigenvalues of sample covariance matrices may be determined by a small number of large entries; see \citet{auffinger2009poisson}. On the other hand, self-normalization can preserve Gaussian fluctuations under weaker tail conditions, as shown for sample correlation matrices by \citet{HeinyParolya2024,Li2024clt,LiLiuXieZhou2026}. These results indicate that the relevant condition depends on the structure of the statistic and inspire us to ask: Does the central limit theorem for Hotelling's $T^2$ statistic remain valid under weaker tail assumptions?

The Gaussian case also suggests that a normal limit alone is not the most natural formulation of the problem. For fixed $p$, the Gaussian Hotelling statistic tends to $\chi_p^2$, whereas a fixed difference $n-p$ leads to an inverse-chi-square limit after a different normalization. Consequently, we study the approximation by the $F$ distribution itself. For a data matrix $\mathbf{X}$ with independent and identically distributed entries, define
\begin{equation}\label{eq:intro-error}
 \varepsilon_{n,p}
 :=\sup_{t\in\mathbb{R}}\left|
 \mathbb{P} \left(\frac{n-p}{np}T_n^2\le t\right)
       -\mathbb{P}(F_{p,n-p}\le t)\right|.
\end{equation}
The question is therefore: under what conditions on the fixed entry law does $\varepsilon_{n,p}\to0$ as the sample size and dimension vary?

With a novel representation of Hotelling's $T^2$ statistic, we relate it to the projection representation $\mathbf{1}^{\top}\mathbf{P}\mathbf{1}$, which connects tail conditions on the entries with both fixed-direction fluctuations and the coordinate geometry of random subspaces.
Under symmetry, we show that for every dimension sequence satisfying $\limsup_{n\rightarrow \infty} p/n<1$, membership in DAN is equivalent to $\varepsilon_{n,p}\to0$. Moreover, both are equivalent to relative concentration of the diagonal entries of the null-space projection of the data matrix. The result is therefore not merely a reduction of a moment assumption: it identifies a geometric property of a random subspace that characterizes normal attraction through Hotelling's statistic. Under a finite second moment, we also obtain the sufficient $F$ approximation for every $1\le p<n$, including the near-square regime with bounded $n-p$.

The role of symmetry becomes apparent when it is removed. For $p/n\rightarrow \gamma\in (0,1)$, we establish asymptotic normality with the Gaussian centering, and consequently the $F$ approximation, under the weak-$5/2$ tail condition
\begin{equation}
 \lim_{x\to\infty}x^{5/2}\mathbb{P}(|\xi|>x)=0.
\end{equation}
The statistic thus admits a weaker absolute-tail sufficient condition than the weak-third-tail condition appearing in related high-dimensional CLTs for sample correlation matrices of \citet{Li2024clt} and \citet{LiLiuXieZhou2026}. 
The exponent $5/2$ is critical for a sufficient condition that is to hold for arbitrary nonsymmetric laws. For a centered standardized Pareto law with index $2<\alpha<5/2$, the $F$ approximation fails because of the accumulation of off-diagonal mean bias, which is of order $n^{3-\alpha}$ and reaches the $\sqrt n$ fluctuation scale of the Gaussian reference at $\alpha=5/2$. Moreover, a law with a bounded asymmetric component and exactly symmetric heavy tails can still have the $F$ approximation below this threshold. The relevant distinction is the signed tail imbalance: an absolute-tail condition controls it uniformly but does not characterize it for each individual law.

The rest of the paper is organized as follows. Section~\ref{sec:main} gives the definitions, states the results, and outlines the proof strategy. Sections~\ref{sec:proofmainresults} and \ref{sec:nonsymmetry} present the proofs of the symmetric and nonsymmetric results, respectively. Appendix~\ref{app:lemma-proofs} contains the proofs of the auxiliary lemmas. The general auxiliary lemmas are collected in Appendix~\ref{app:auxiliary}.   Examples~\ref{thm:regular-boundary} and  \ref{rem:balanced-tails} are verified in Appendix~\ref{sec:verification}.

\textbf{Notation.} Throughout the paper, $C,c>0$ denote constants whose values may change from line to line. The vectors $\mathbf{1}$ and $\mathbf{0}$ have the dimension indicated by the surrounding expression, and $\mathbf{e}_j$ denotes a coordinate vector. For a matrix $\mathbf{A}$, $\|\mathbf{A}\|$ and $\|\mathbf{A}\|_{\mathrm{F}}$ denote its operator and Frobenius norms. For real symmetric matrices, $\mathbf{A}\preceq\mathbf{B}$ means that $\mathbf{B}-\mathbf{A}$ is positive semidefinite; the notation $\mathbf{A}\prec\mathbf{B}$ means that this difference is positive definite. We use the notation $a\wedge b=\min(a,b)$ and $a\vee b=\max(a,b)$ for $a,b\in \mathbb{R}$.
We use $\mathrm{O}$ and $o$ for deterministic orders and $\mathrm{O}_{\mathbb{P}}$ and $o_{\mathbb{P}}$ for orders in probability. For a deterministic sequence $a_n>0$, $U_n=o_{\mathbb P}(a_n)$ means that $U_n/a_n\to0$ in probability. We write $a_n\sim b_n$ if $a_n/b_n\rightarrow 1$ as $n\rightarrow \infty$.
An expectation restricted to an event $E$ is always written as $\mathbb{E}[U\mathbbm{1}_E]$. For the filtration generated by the first $i$ rows, we write $\mathbb{E}_i(U)=\mathbb{E}(U\mid\mathcal{F}_i)$ and $\Var_i(U)=\mathbb{E}_i[(U-\mathbb{E}_iU)^2]$. A subscript naming an independent random vector, such as $\mathbb{E}_{\mathbf{x}}$ or $\mathbb{E}_{\boldsymbol{\varepsilon}}$, indicates integration with respect to that vector while the remaining quantities are held fixed. For $q>0$, we write $\|X_n\|_{L^q}=(\mathbb E|X_n|^q)^{1/q}$. An $L^q$ bound refers to a bound on this quantity, and $X_n\to0$ in $L^q$ means that $\|X_n\|_{L^q}\to0$ as $n\to\infty$.

\section{Main results}\label{sec:main}

\subsection{Basic definitions}\label{sec:definitions}

Let $\mathbf{X}=(X_{ij})_{1\le i\le p,\,1\le j\le n}$ have independent entries with a fixed nondegenerate real law $\xi$, where $1\le p<n$. Its columns are denoted by $\mathbf{x}_j=(X_{1j},\ldots,X_{pj})^\top$, for $1\le j\le n$. Set
\begin{equation}\label{eq:model}
 \begin{gathered}
 m=n-p,\qquad\gamma_n=p/n,\qquad
 \mathbf{H}=\mathbf{I}_n-n^{-1}\mathbf{1}\mathbf{1}^\top,\\
 \bar{\mathbf{x}}=n^{-1}\mathbf{X}\mathbf{1},\qquad
 \mathbf{S}=n^{-1}\mathbf{X}\mathbf{H}\mathbf{X}^\top.
 \end{gathered}
\end{equation}
The Hotelling $T^2$ statistic is defined by
\begin{equation}
 T_n^2=n\bar{\mathbf{x}}^\top\mathbf{S}^{-1}\bar{\mathbf{x}},
\end{equation}
and we set $T_n^2=0$ when $\mathbf{S}$ is singular.  Invariance under nonsingular deterministic linear transformations extends the conclusions to observations of the form $\mathbf{A}\mathbf{x}_j$ with nonsingular $\mathbf{A}$.

Let $\mathbf{P}=\mathbf{P}(\mathbf{X})$ be the orthogonal projection onto $\ker\mathbf{X}$. On the full-row-rank event $\mathcal{F}=\{\operatorname{rank}\mathbf{X}=p\}$, it has the representation $\mathbf{P}=\mathbf{I}_n-\mathbf{X}^\top(\mathbf{X}\mathbf{X}^\top)^{-1}\mathbf{X}$. Define
\begin{equation}\label{eq:PQR}
 \mathbf{L}=\mathbf{I}_n-\mathbf{P},\qquad R=\mathbf{1}^\top\mathbf{P}\mathbf{1},
 \qquad Q=\mathbf{1}^\top\mathbf{L}\mathbf{1}=n-R,\qquad v=mp/n.
\end{equation}
For any real matrix $\mathbf{A}$ with $r$ rows and $N>r$ columns, let $\mathcal{F}(\mathbf{A})=\{\operatorname{rank}\mathbf{A}=r\}$ and let $\mathcal{G}(\mathbf{A})$ denote the event that adjoining $\mathbf{1}_N^\top$ gives rank $r+1$. For the original matrix, $\mathcal{F}=\mathcal{F}(\mathbf{X})$ and the augmented-full-rank event is defined by
\begin{equation}\label{eq:def_fullrank}
 \mathcal{G}=\left\{\operatorname{rank}\begin{pmatrix}\mathbf{1}^\top\\\mathbf{X}\end{pmatrix}=p+1\right\}.
\end{equation}
We write $\eta_n=\mathbb{P}(\mathcal{G}^c)$.
These events will be used repeatedly below. The event $\mathcal{F}$ specifies full row rank of $\mathbf{X}$, whereas $\mathcal{G}\subseteq\mathcal{F}$ is precisely the event that $\mathbf{S}$ is invertible. For a truncated matrix or a row or column submatrix, the matrix is included in the event notation. On $\mathcal{G}$, we have the identity (see Section \ref{sec:strategy} below for details)
\begin{equation}\label{eq:identity}
 1+T_n^2/n=n/R,
\end{equation}
which is a key ingredient in the $F$-distribution approximation. The quantity $R=\mathbf{1}^{\top}\mathbf{P}\mathbf{1}=\|\mathbf{P}\mathbf{1}\|^2=\operatorname{dist}^2\big(\mathbf{1},\operatorname{row}(\mathbf{X})\big)$ measures the squared length of the projection of the constant direction $\mathbf{1}$ onto the null space of $\mathbf{X}$. In the rotationally invariant Gaussian case, the
row space is uniformly distributed on the Grassmannian, so this quantity
reduces to a classical random-projection statistic. 

For the independent chi-square variables in \eqref{eq:intro-exact-F}, set $G_{p,m}=B_p/A_m=(p/m)F_{p,m}$. For Gaussian entries, it follows that
\begin{equation}
 R^{\rm G}\stackrel d=\frac{nA_m}{A_m+B_p},\qquad
 T_n^{2,\rm G}/n\stackrel d=G_{p,m}.
\end{equation}
For real random variables $U,V$, the Kolmogorov distance is defined as
\[
d_K(U,V)=\sup_t|\mathbb{P}(U\le t)-\mathbb{P}(V\le t)|.
\]
The error in \eqref{eq:intro-error} can equivalently be written as
\begin{equation}
 \varepsilon_{n,p}
 =d_K \left(\frac{m}{np}T_n^2,F_{p,m}\right)
 =d_K(T_n^2/n,G_{p,m}).
\end{equation}
Thus $F_{p,m}$ always denotes the usual $F$ distribution, and $G_{p,m}$ is used only for the unscaled chi-square ratio.

For a symmetric law, $\xi\in\DAN$ means that $a_n^{-1}\sum_{j=1}^n\xi_j\dto N(0,1)$ for some positive sequence $a_n$. Equivalently, the function $\ell(t)=\mathbb{E}[\xi^2\mathbbm{1}_{\{|\xi|\le t\}}]$ is slowly varying at infinity. On $\mathcal{F}$, one has $\operatorname{tr}\mathbf{P}=m$. Since the law and the event $\mathcal{F}$ are invariant under column permutations,
\[
 \mathbb{E}\big[P_{jj}\mathbbm{1}_{\mathcal{F}}\big]
 =\frac mn\mathbb{P}(\mathcal{F}),\qquad 1\le j\le n.
\]
We define
\begin{equation}\label{eq:delta}
 \delta_{n,p}
 =\frac{\sum_{j=1}^n(P_{jj}-m/n)^2}{p(1-p/n)}
 =\frac{\sum_{j=1}^n(L_{jj}-p/n)^2}{p(1-p/n)},
\end{equation}
and set $\delta_{n,p}=0$ on $\mathcal{F}^c$. This quantity measures the diagonal concentration relative to the rank scale. Since $\tr\mathbf{L}=p$ and $0\le L_{jj}\le1$ on $\mathcal{F}$,
\[
 0\le\sum_{j=1}^n(L_{jj}-p/n)^2=\sum_{j=1}^nL_{jj}^2-p^2/n\le p-p^2/n,
\]
so that $0\le\delta_{n,p}\le1$. Moreover, for Gaussian entries, one has
\begin{equation}\label{eq:Gaussian-moments}
\mathbb{E} R^{\rm G}=m,\qquad
\Var(R^{\rm G})=\frac{2mp}{n+2},\qquad
\mathbb{E}\delta_{n,p}^{\rm G}=\frac{2}{n+2}
\end{equation}
by orthogonal invariance and some elementary algebra.

\subsection{The symmetric case}\label{sec:main-symmetric}

Our main results are stated as follows.
\begin{theorem}\label{thm:DAN}
Suppose that the fixed entry law $\xi$ is nondegenerate and symmetric about zero. If
\begin{equation}\label{eq:aspect-away}
 \limsup_{n\to\infty}p/n<1,
\end{equation}
then the following are equivalent:
\begin{itemize}
	\item[(i)] $\xi\in\DAN$;
	\item[(ii)] $\mathbb{E}\delta_{n,p}\to0$;
	\item[(iii)] $\varepsilon_{n,p}\rightarrow0$.
\end{itemize}
\end{theorem}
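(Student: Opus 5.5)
Throughout, I work through the identity \eqref{eq:identity}, $1+T_n^2/n=n/R$ on $\mathcal{G}$. Its Gaussian counterpart, $R^{\rm G}\stackrel d=nA_m/(A_m+B_p)$, turns statement (iii) into a Kolmogorov comparison between $R$ and $R^{\rm G}$. Because $t\mapsto n/t-1$ is monotone, $\varepsilon_{n,p}\le d_K(R,R^{\rm G})+\eta_n$, and conversely. The first step is therefore to show that $\eta_n\to0$ for a symmetric nondegenerate law when $\limsup p/n<1$. I would use a standard singularity-probability bound for i.i.d. matrices with one deterministic row adjoined; the bound should be exponentially small, since the aspect ratio stays away from one.

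The key device is a conditional sign comparison. Symmetry gives $\mathbf{X}\stackrel d=\mathbf{X}\mathbf{D}_\varepsilon$ for a Rademacher diagonal $\mathbf{D}_\varepsilon$ independent of $\mathbf{X}$. Since $\ker(\mathbf{X}\mathbf{D}_\varepsilon)=\mathbf{D}_\varepsilon\ker\mathbf{X}$, we get $R\stackrel d=\boldsymbol{\varepsilon}^\top\mathbf{P}\boldsymbol{\varepsilon}$ with $\mathbf{P}=\mathbf{P}(\mathbf{X})$ independent of $\boldsymbol{\varepsilon}$. Conditionally on $\mathbf{X}$, this is a Rademacher quadratic form. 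Its diagonal part is $\tr\mathbf{P}=m$, which is deterministic. Its off-diagonal part $\sum_{j\neq k}P_{jk}\varepsilon_j\varepsilon_k$ has conditional variance $2\sum_{j\ne k}P_{jk}^2=2(m-\sum_j P_{jj}^2)=2\big(mp/n-\sum_j(P_{jj}-m/n)^2\big)=2v(1-\delta_{n,p})$. The same computation for Gaussian $\mathbf{g}$ gives $\mathbf{g}^\top\mathbf{P}\mathbf{g}$ conditional variance $2m$, whereas $R^{\rm G}$ has variance about $2v$. This suggests comparing with a Gaussian projection instead of a Gaussian vector: replace $\mathbf{X}$ by a Gaussian matrix and use rotation invariance.

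For (ii)$\Rightarrow$(iii), I would apply a conditional quantitative CLT for Rademacher chaos of order two, in Kolmogorov distance. The fourth-moment or Götze–Tikhomirov type bound is controlled by $\max_j\sum_k P_{jk}^2/v$ and $\|\mathbf{P}\mathbf{P}\text{-offdiag}\|$-type contractions. This yields $(R-m)/\sqrt{2v}$ approximately $N(0,1-\delta_{n,p})$ given $\mathbf{X}$. When $\mathbb{E}\delta\to0$ this is $N(0,1)$. I would compare it with $R^{\rm G}$, where either $v\to\infty$ gives a normal limit, or bounded $v$ (fixed $p$) requires a Poisson/chi-square comparison handled separately via the same sign representation and a multivariate CLT for $\mathbf{P}\boldsymbol\varepsilon$.

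The main obstacle is controlling the contraction terms. The required smallness of $\sum_{j}\big(\sum_{k\neq j}P_{jk}^2\big)^2/v^2$ and related quantities must be deduced from $\delta\to0$, using $\mathbf{P}^2=\mathbf{P}$; the identity $\sum_k P_{jk}^2=P_{jj}$ reduces the first to $\sum_j(P_{jj}-P_{jj}^2)^2/v^2\le \max_j P_{jj}(1-P_{jj})\cdot v^{-1}$, which tends to zero if $v\to\infty$.

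For (iii)$\Rightarrow$(ii), I would argue by variance deficit. If $\mathbb{E}\delta\not\to0$ along a subsequence, then the conditional variance is $2v(1-\delta)$ with $\delta$ bounded away from zero on an event of non-negligible probability. I would then show that the law of $(R-m)/\sqrt{2v}$ is a mixture whose fourth-moment/variance structure cannot match $N(0,1)$; concretely, I would use bounded test functions and hypercontractivity of the chaos to force a Kolmogorov gap.

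For (i)$\Leftrightarrow$(ii), the main input is that $L_{jj}=\mathbf{x}_j^\top(\mathbf{X}\mathbf{X}^\top)^{-1}\mathbf{x}_j$ is a leverage score. I would establish two implications:
\begin{itemize}
\item[(a)] If $\ell$ is slowly varying, then the self-normalized leverages concentrate at $p/n$; the plan is to use the Sherman–Morrison leave-one-out formula and the fact that $\max_j\|\mathbf{x}_j\|^2/\sum_j\|\mathbf{x}_j\|^2\to0$ under DAN.
\item[(b)] If $\xi\notin\DAN$, then $\max_{j}X_{ij}^2/\sum_j X_{ij}^2\not\to0$, so some columns carry leverage near $1$. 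This gives $\sum_j(L_{jj}-p/n)^2\gtrsim p$ with positive probability, and hence $\mathbb{E}\delta\not\to0$.
\end{itemize}

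I expect (b) to be the main obstacle, since it needs a lower bound on leverages for heavy-tailed columns, uniformly in $p$ up to $cn$.
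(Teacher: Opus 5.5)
Your treatment of (ii)$\Leftrightarrow$(iii) follows the paper's route and is sound in outline. It uses $\eta_n\to0$ from an atom-based rank bound, and $R\stackrel d=\boldsymbol\varepsilon^\top\mathbf P\boldsymbol\varepsilon$ with conditional variance $2v(1-\delta_{n,p})$. It then applies a conditional chaos CLT when $v\to\infty$ and a separate chi-square comparison for fixed $p$. For the converse, uniform fourth-moment bounds force the normalized second moment $\approx1-\mathbb E\delta_{n,p}$ to match the reference value $1$. Two small corrections. First, the contraction terms are controlled by $\|\mathbf P-\diag\mathbf P\|\le2$ and $v\to\infty$, not by $\delta_{n,p}$, which enters only through the variance. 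Second, for fixed $p$ the relevant vector is $\mathbf U^\top\boldsymbol\varepsilon$ with $\mathbf U$ an orthonormal basis of the row space. Its Lindeberg parameter $\max_jL_{jj}$ is controlled by $\sum_jL_{jj}^2=v\delta_{n,p}+p^2/n$, and no Poisson regime arises.

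\textbf{The genuine gap is (i)$\Rightarrow$(ii).} The input you propose, $\max_j\|\mathbf x_j\|^2/\sum_j\|\mathbf x_j\|^2\to0$, does not control leverages when $p\asymp n$. Leverages depend on the whole Gram matrix, which in that regime is far from a multiple of the identity. The leave-one-out identity $L_{jj}=q_j/(1+q_j)$ helps only once you show that $q_j=\mathbf x_j^\top(\mathbf X_{-j}\mathbf X_{-j}^\top)^{-1}\mathbf x_j$ concentrates around a common value. Under DAN with $\mathbb E\xi^2=\infty$ there is no second-moment bound for such quadratic forms, and the unregularized inverse is not under control. The paper supplies three ingredients that your plan lacks:
\begin{itemize}
\item It truncates at the normal-attraction scale $a_n$ with $n\ell(a_n)/a_n^2\to1$. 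Slow variation then gives $n\mathbb P(|\xi|>a_n)\to0$ and a standardized fourth moment $\mu_{4,n}=o(n)$.
\item It proves diagonal concentration for the truncated matrix. If $p/n\to0$, this follows from the Frobenius bound $p^{-1}\mathbb E\tr(\mathbf C_n-\mathbf I_p)^2\to0$. If $p/n\to\gamma$, it uses a regularized resolvent $\mathbf K_n(t)$, the quadratic-form LLN, a scalar self-consistent equation, and the ordering $\mathbf P^\circ\preceq\mathbf K_n(t)$ with $t\downarrow0$.
\item It transfers back to $\mathbf X$. Rescaling rows does not change the row space, and only $N_n$ rows are altered, with $\mathbb EN_n\le pn\mathbb P(|\xi|>a_n)=o(p)$. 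Hence $\sum_j(L_{jj}-L^\circ_{jj})^2\le\|\mathbf L-\mathbf L^\circ\|_{\mathrm F}^2\le2N_n$.
\end{itemize}
Without some such mechanism, step (a) is not a proof.

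\textbf{Second gap, in (ii)$\Rightarrow$(i).} The step you flag as the obstacle is actually immediate: any normalized row $\mathbf y$ lies in the row space, so $\mathbf L\succeq\mathbf y\mathbf y^\top$ and $L_{jj}\ge y_j^2$. The real deficiency is your probabilistic input. Knowing only $\max_jX_{ij}^2/\sum_jX_{ij}^2\not\to0$ gives $y_j^2\ge\epsilon$ for some small $\epsilon$. This is useless when $p/n$ stays near some $\gamma\ge\epsilon$, because typical leverages are then $\approx p/n$ and $(L_{jj}-p/n)^2$ need not be bounded below. You need domination $y_j^2\ge h$ for a fixed $h>\gamma_0\ge\limsup p/n$, in a fixed fraction of rows, at suitably chosen $n$.

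The paper extracts this from the failure of the tail criterion $tq(t)/U(t)\to0$ for $W=\xi^2$. If $t_kq(t_k)\ge cU(t_k)$, take $n_k=\lfloor b/q(t_k)\rfloor$ with $b$ small and $\epsilon$ with $(1+\epsilon)^{-1}=h$. For the squared entries of a row, the event $\{W_1>t_k,\ \sum_{j\ge2}W_j\le\epsilon t_k\}$ then has probability at least $q(t_k)/2$. Hence $\mathbb P(L_{11}\ge h)\ge c_bpq(t_k)$, and column exchangeability gives $p^{-1}\mathbb E\sum_j(L_{jj}-p/n)^2\ge c_b(h-\gamma_0)^2n_kq(t_k)$, which stays bounded away from zero. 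This also supplies the order-$p$ count of high-leverage columns that your claim ``$\gtrsim p$ with positive probability'' tacitly assumes.
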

The reference distribution $F_{p,n-p}$ depends on both $n$ and $p$, so the theorem characterizes an approximation rather than convergence to a single fixed law. Its geometric condition concerns the diagonals relative to the rank scale. As a complementary result, under a finite second moment, the sufficient approximation also extends to the near-square regime.
\begin{corollary}\label{thm:all-dimensions}
	Let the fixed entry law be nondegenerate, symmetric about zero, and have finite variance. Then, for every sequence $1\le p<n$,
	\begin{equation}
	\varepsilon_{n,p}\rightarrow0.
	\end{equation}
\end{corollary}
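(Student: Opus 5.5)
Since finite variance implies $\xi\in\DAN$, Theorem~\ref{thm:DAN} already gives $\varepsilon_{n,p}\to0$ along any sequence with $\limsup p/n<1$. The plan is to argue along subsequences: it suffices to show that every subsequence has a further subsequence along which $\varepsilon_{n,p}\to0$. We may therefore assume that either $p/n\to\gamma<1$, which is covered by the theorem, or $p/n\to1$. The remaining case $p/n\to1$ splits into $m=n-p\to\infty$ and $m$ bounded (after passing to a further subsequence, $m$ is constant).

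The basic tool is identity \eqref{eq:identity}: on $\mathcal{G}$, $T_n^2/n=n/R-1$. Comparing $T_n^2/n$ with $G_{p,m}$ therefore amounts to comparing $R$ with $R^{\rm G}\stackrel d= nA_m/(A_m+B_p)$ in Kolmogorov distance, together with $\eta_n=\mathbb P(\mathcal G^c)\to0$. The latter should follow from finite variance via a standard smallest-singular-value or invertibility argument for the augmented $(p+1)\times n$ matrix with $p+1\le n$. Since $R^{\rm G}/m\approx A_m/m\cdot n/(n)$ when $p/n\to1$, the target law of $R$ is essentially $A_m$ (scaled chi-square $\chi^2_m$) if $m$ is fixed, and $m+\sqrt{2m}\,N(0,1)$ if $m\to\infty$. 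The structural step I would use is the representation $R=\|\mathbf{P}\mathbf{1}\|^2$, where $\mathbf P$ is the projection onto an $m$-dimensional kernel. Writing $\mathbf P=\mathbf U\mathbf U^\top$ with an orthonormal basis $\mathbf U\in\mathbb R^{n\times m}$ of $\ker\mathbf X$ gives $R=\|\mathbf U^\top\mathbf 1\|^2$. The aim is to show that $n^{-1/2}\mathbf U^\top\mathbf 1$ is approximately $N(0,\mathbf I_m/n\cdot n)$, that is, $\mathbf U^\top\mathbf 1\approx N(0,\mathbf I_m)$.

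Symmetry enters through the conditional sign comparison. Because $\xi$ is symmetric, $\mathbf X\stackrel d=\mathbf X\mathbf D$ for an independent diagonal Rademacher matrix $\mathbf D$. Since $\ker(\mathbf X\mathbf D)=\mathbf D\ker\mathbf X$, this gives $R\stackrel d=\boldsymbol\varepsilon^\top\mathbf P\boldsymbol\varepsilon$, where $\boldsymbol\varepsilon$ is a Rademacher vector independent of $\mathbf P$. Conditionally on $\mathbf P$, this is a Rademacher quadratic form (chaos) with conditional mean $\tr\mathbf P=m$. Its off-diagonal part has conditional variance $2(\|\mathbf P\|_F^2-\sum_j P_{jj}^2)=2(m-\sum_jP_{jj}^2)$. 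Its diagonal part equals $m$ exactly, so no fluctuation arises there. The comparison is carried out with a Gaussian $\mathbf g$ in place of $\boldsymbol\varepsilon$: $\mathbf g^\top\mathbf P\mathbf g\sim\chi^2_m$ exactly. By invariance principles for quadratic forms (Mossel--O'Donnell--Oleszkiewicz or Rotar type), $d_K(\boldsymbol\varepsilon^\top\mathbf P\boldsymbol\varepsilon,\mathbf g^\top\mathbf P\mathbf g)$, conditionally on $\mathbf P$, is controlled by the maximal influence $\max_j\sum_k P_{jk}^2=\max_jP_{jj}$ relative to the variance $\sum_{j}P_{jj}(1-P_{jj})$-type scale. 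The variance discrepancy between the Rademacher and Gaussian diagonal parts is $\sum_j(\varepsilon_j^2\text{ vs }g_j^2)P_{jj}$, whose variance is $2\sum_jP_{jj}^2$. Thus the comparison reduces to showing $\sum_jP_{jj}^2\ll m$, i.e. $\max_jP_{jj}\to0$ in probability. The rescaling by $n/(A_m+B_p)$ versus $1$ differs by a factor $1+O_{\mathbb P}(\sqrt{m}/n)$, which is negligible.

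The main obstacle is proving $\max_j P_{jj}\to0$ in probability when $p/n\to1$, particularly for bounded $m$, where $\mathbb E P_{jj}=m/n$ but a heavy entry could concentrate kernel mass. The first attempt would be the formula $P_{jj}=1/(1+\mathbf x_j^\top(\mathbf X_{(j)}\mathbf X_{(j)}^\top)^{-1}\mathbf x_j)$, where $\mathbf X_{(j)}$ deletes column $j$. This shows that $P_{jj}$ is small unless $\mathbf x_j$ nearly lies in the low-singular directions of a $p\times(n-1)$ matrix with nullity $m-1$. With finite variance, the rescaled columns are isotropic. A union bound therefore requires a tail estimate $\mathbb P(P_{jj}>\epsilon)=o(1/n)$, or alternatively an averaged bound $\mathbb E\sum_jP_{jj}^2=o(m)$, which suffices for the variance argument above. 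I expect the averaged bound to be attainable through a second-moment computation of $\sum_j P_{jj}^2=\sum_j\|\mathbf P\mathbf e_j\|^4$, using smallest-singular-value estimates for nearly square matrices (Rudelson--Vershynin type under finite variance). This is the delicate step.
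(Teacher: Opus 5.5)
Your reduction matches the paper's. Finite variance gives DAN, so Theorem~\ref{thm:DAN} covers $\limsup p/n<1$. Along $p/n\to1$ you use $1+T_n^2/n=n/R$, the column-sign identity $R\stackrel d=\boldsymbol\varepsilon^\top\mathbf P\boldsymbol\varepsilon$, and a Gaussian comparison governed by $\max_jP_{jj}$, which is exactly Lemma~\ref{lem:sign} and Proposition~\ref{thm:comparison}.

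\textbf{First gap: the max-diagonal estimate.} The estimate that carries the content of the corollary, $\max_{1\le j\le n}P_{jj}\pto0$ when $p/n\to1$, is only conjectured, and the route you sketch does not work as stated. Take the unregularized leave-one-out formula $P_{jj}=(1+\mathbf x_j^\top\mathbf M\mathbf x_j)^{-1}$ with $\mathbf M=(\mathbf X_{(j)}\mathbf X_{(j)}^\top)^{-1}$. Already for Gaussian entries, $\|\mathbf M\|$ is typically of order $n/m^2$ while $\tr\mathbf M$ is of order $n/m$. So $\tr\mathbf M/\|\mathbf M\|$ is only of order $m$, and $O(1)$ in the square case $m=1$. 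A lower-tail bound such as Lemma~\ref{lem:positive-form} then gives only $e^{-cm}$. That is useless for a union bound over $n$ columns, and for $n\,\mathbb E P_{jj}^2=o(m)$, when $m$ is bounded or $O(\log n)$.

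Smallest-singular-value estimates bound $\|\mathbf M\|$ from above. A lower bound on $\mathbf x_j^\top\mathbf M\mathbf x_j$ instead needs $\mathbf M$ to be large on a subspace of dimension proportional to $n$, so they are the wrong tool. The missing idea is regularization at a fixed ridge $t>0$. One has $\mathbf 0\preceq\mathbf P\preceq\mathbf K_n(t)=\mathbf I_n-n^{-1}\mathbf X^\top(\mathbf X\mathbf X^\top/n+t\mathbf I_p)^{-1}\mathbf X$. The diagonal entries of $\mathbf K_n(t)$ are $(1+q_j)^{-1}$ with $q_j=n^{-1}\mathbf x_j^\top\mathbf M_j\mathbf x_j$ and $\|\mathbf M_j\|\le1/t$. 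Lemma~\ref{lem:quadratic-LLN} and the trace identity give a self-consistent equation whose root at $\gamma=1$ is $a(t)=(\sqrt{1+4/t}-1)/2$, which tends to $\infty$ as $t\to0$. Now $\tr\mathbf M_j/\|\mathbf M_j\|\gtrsim nta(t)$, so Lemma~\ref{lem:positive-form} and a union bound give $\max_j(\mathbf K_n(t))_{jj}\le(1+a_0a(t)/2)^{-1}$ with probability tending to one. Letting $t\to0$ then gives the claim.

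\textbf{Second gap: $\eta_n\to0$.} This is necessary, since $T_n^2=0$ on $\mathcal G^c$ while $G_{p,m}>0$. You defer it to a ``standard invertibility argument''. For $m\to\infty$ it is just Lemma~\ref{lem:rank}. For fixed nullity $m=k$ and an atomic symmetric law such as Rademacher, that lemma gives only the non-vanishing bound $q^k/(1-q)$. The augmented matrix is then essentially square, has a deterministic row $\mathbf 1^\top$, and its entries have only two moments, so this is not a routine application of known theorems.

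The paper handles this case using the max-diagonal estimate again, in two steps.
\begin{itemize}
\item Rank of $\mathbf X$: expose rows one at a time. The first $n-K$ rows are linearly independent except with probability $q^{K+1}/(1-q)$. At each of the remaining $K-k$ steps, a unit kernel vector $\mathbf v_i$ of the exposed rows satisfies $\|\mathbf v_i\|_\infty^2\le\max_j(\mathbf P_i)_{jj}\pto0$. The Lindeberg CLT then gives $\mathbf v_i^\top\mathbf z_{i+1}\dto N(0,1)$, so $\mathbb P(\mathbf v_i^\top\mathbf z_{i+1}=0)\to0$. Letting $K\to\infty$ gives $\mathbb P(\mathcal F)\to1$.
\item Augmented rank: the sign comparison gives $R\dto\chi_k^2$, hence $\mathbb P(R=0)\to0$ and $\eta_n\to0$.
\end{itemize}
Without the max-diagonal estimate, neither the sign comparison nor this rank control is available, so your proposal does not reach the conclusion.
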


The limiting forms of the $F$ reference give the following corollary for symmetric entries.
\begin{corollary}\label{cor:normal}
Suppose that either the hypotheses of Theorem~\ref{thm:DAN} hold with $\xi\in\DAN$, or the hypotheses of Corollary~\ref{thm:all-dimensions} hold. If $p\to\infty$ and $m=n-p\to\infty$, then
\begin{align}
 \frac{R-m}{\sqrt{2mp/n}}&\dto N(0,1),\label{eq:R-CLT}\\
 \sqrt{\frac{m^3}{2pn}}
 \left(\frac{T_n^2}{n}-\frac pm\right)&\dto N(0,1),\label{eq:T-CLT}\\
 \sqrt{\frac{nm}{2p}}
 \big(\log(1+T_n^2/n)+\log(m/n)\big)&\dto N(0,1).
 \label{eq:all-dimension-log}
\end{align}
Under symmetry and \eqref{eq:aspect-away}, if $p\to\infty$, either of \eqref{eq:R-CLT} and \eqref{eq:T-CLT} conversely implies $\xi\in\DAN$. For $p=k$ fixed, Theorem \ref{thm:DAN} gives $T_n^2\dto\chi_k^2$. For $m=k$ fixed, Corollary~\ref{thm:all-dimensions} gives
\begin{equation}
 R\dto\chi_k^2,\qquad T_n^2/n^2\dto1/\chi_k^2.
\end{equation}
\end{corollary}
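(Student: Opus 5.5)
The plan is to transfer every statement from the Gaussian reference, using $\varepsilon_{n,p}\to0$ (Theorem~\ref{thm:DAN}(iii) or Corollary~\ref{thm:all-dimensions}) and the identity \eqref{eq:identity}. Two facts make this work. First, Kolmogorov distance is invariant under strictly monotone deterministic maps. Second, $\mathbb{P}(\mathcal{G}^c)\to0$; this holds because otherwise $T_n^2=0$ would carry positive mass while $G_{p,m}>0$ a.s., contradicting $\varepsilon_{n,p}\to0$. On $\mathcal{G}$ we have $R=n/(1+T_n^2/n)$, and $R^{\rm G}=n/(1+G_{p,m})$ in law. Hence $d_K(R,R^{\rm G})\le\varepsilon_{n,p}+\eta_n\to0$. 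The same applies to $\log(1+T_n^2/n)$ and to any affine normalization of $T_n^2/n$ or $R$. So each display reduces to the corresponding limit for $A_m,B_p$. Kolmogorov closeness to a sequence converging weakly to a continuous law gives weak convergence to that law, by Pólya's theorem.

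For the Gaussian limits with $p,m\to\infty$, write $A_m=m+\sqrt{2m}Z_1$ and $B_p=p+\sqrt{2p}Z_2$ with $Z_1,Z_2$ asymptotically independent $N(0,1)$. Then
\[
R^{\rm G}-m=\frac{n A_m}{A_m+B_p}-m=\frac{pA_m-mB_p}{A_m+B_p}+\text{(lower order)}.
\]
The leading term is $\bigl(p\sqrt{2m}Z_1-m\sqrt{2p}Z_2\bigr)/n$, with variance $2mp(p+m)/n^2=2mp/n$. This gives \eqref{eq:R-CLT}. For \eqref{eq:T-CLT}, apply the delta method to $G=B_p/A_m$:
\[
G-\frac pm\approx\frac{\sqrt{2p}Z_2}{m}-\frac{p\sqrt{2m}Z_1}{m^2},
\]
whose variance is $2p/m^2+2p^2/m^3=2pn/m^3$. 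For \eqref{eq:all-dimension-log}, note $\log(1+G)+\log(m/n)=\log(n/R^{\rm G})-\log(n/m)=-\log(R^{\rm G}/m)$. This is $-(R^{\rm G}-m)/m+o_{\mathbb P}$ on the scale $\sqrt{2p/(nm)}$, because $(R^{\rm G}-m)/m=O_{\mathbb P}(\sqrt{p/(nm)})\to0$. The error terms must be uniform in how $p,m$ grow relative to each other, including $p/n\to0$ or $1$. I would handle this with the relative errors $A_m/m-1=O_{\mathbb P}(m^{-1/2})$ and $B_p/p-1=O_{\mathbb P}(p^{-1/2})$ and a direct expansion. This is the main bookkeeping step, but it is routine.

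For the fixed regimes: with $p=k$ fixed, $mG_{k,m}=B_k/(A_m/m)\to\chi_k^2$. Moreover $T_n^2=n(T_n^2/n)$ and $n/m\to1$, so $T_n^2\dto\chi_k^2$ by Theorem~\ref{thm:DAN}, which needs $\xi\in\DAN$. Here \eqref{eq:aspect-away} holds trivially. With $m=k$ fixed, Corollary~\ref{thm:all-dimensions} applies. Then $R^{\rm G}=nA_k/(A_k+B_p)$ with $B_p/n\to1$, so $R^{\rm G}\to\chi_k^2$. Likewise $G_{p,k}/n=(B_p/n)/A_k\to1/\chi_k^2$, so $T_n^2/n^2\dto1/\chi_k^2$.

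For the converse under symmetry and \eqref{eq:aspect-away} with $p\to\infty$, suppose \eqref{eq:R-CLT} or \eqref{eq:T-CLT} holds. The two are equivalent through \eqref{eq:identity} and the same linearization, provided $\mathbb{P}(\mathcal{G})\to1$. For symmetric nondegenerate laws with $p/n$ bounded away from $1$ I would take this from the paper's rank estimates; if it is not available, I would argue that mass at $T_n^2=0$ contradicts \eqref{eq:T-CLT}. Since \eqref{eq:aspect-away} forces $m\to\infty$, the Gaussian reference satisfies the same CLT. Hence $d_K\bigl((R-m)/\sqrt{2mp/n},\,(R^{\rm G}-m)/\sqrt{2mp/n}\bigr)\to0$ by Pólya's theorem, and this translates back to $\varepsilon_{n,p}\to0$ up to $\eta_n$. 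Theorem~\ref{thm:DAN}, (iii)$\Rightarrow$(i), then yields $\xi\in\DAN$.

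The main obstacle is this last step in case $\mathbb{P}(\mathcal{G})\to1$ is not directly available. Then I would show that \eqref{eq:T-CLT} forces $\eta_n\to0$. The reason is that $T_n^2=0$ corresponds to the normalized value $-\sqrt{m^3/(2pn)}\,p/m\to-\infty$, so positive mass there contradicts tightness.
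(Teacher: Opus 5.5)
Your proposal is correct. It follows the paper's overall strategy: you transfer from the Gaussian reference through $\varepsilon_{n,p}\to0$, monotone invariance of $d_K$, and $\eta_n\le\mathbb{P}(T_n^2=0)\le\varepsilon_{n,p}$. The route differs in three places.

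First, the paper obtains $(R^{\rm G}-m)/\sqrt{2v}\dto N(0,1)$ from the sign comparison of Proposition~\ref{thm:comparison} together with $\mathbb{E}\delta^{\rm G}_{n,p}=2/(n+2)$. You compute it directly from the exact identity $R^{\rm G}-m=(pA_m-mB_p)/(A_m+B_p)$. This is more elementary. It is also automatically uniform in $p/n$, because $(A_m+B_p)/n\to1$ and the numerator is a unit-variance combination of two independent standardized chi-squares with coefficients $\sqrt{p/n}$ and $\sqrt{m/n}$. Note that this identity is exact, so there is no ``lower order'' term.

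Second, the paper proves \eqref{eq:R-CLT} first and passes to \eqref{eq:T-CLT} and \eqref{eq:all-dimension-log} by the linearizations $Y_n=-Z_n+o_{\mathbb P}(1)$ and $L_n=-Z_n+o_{\mathbb P}(1)$ on the actual statistic. You instead transfer each display separately. Since $Y_n$, $L_n$ and $T_n^2/n^2$ are increasing functions of $T_n^2/n$, you in fact have $d_K(Y_n,Y_n^{\rm G})=d_K(L_n,L_n^{\rm G})=\varepsilon_{n,p}$ exactly, so linearization is needed only inside the Gaussian model. The paper's tightness-based linearizations are what it reuses in Theorem~\ref{thm:weak-five-halves}, where no a priori $F$ approximation is available.

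Third, for the converse the paper uses the uniform fourth-moment bound on the rank-corrected Rademacher form to force $\mathbb{E}(Z^\sharp)^2\to1$. That gives $\mathbb{E}\delta_{n,p}\to0$, which is condition (ii) of Theorem~\ref{thm:DAN}. You use P\'olya's theorem to turn the CLT into $\varepsilon_{n,p}\to0$ and then invoke (iii)$\Rightarrow$(i). This is valid and is the same mechanism one level up, since the paper's (iii)$\Rightarrow$(ii) is exactly that uniform-integrability argument. The needed $\eta_n\to0$ for the $R$-version comes from Lemma~\ref{lem:rank}, because \eqref{eq:aspect-away} forces $m\to\infty$. For the $T$-version you need no $\eta_n$ at all, because $d_K(Y_n,Y_n^{\rm G})=\varepsilon_{n,p}$. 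So the ``main obstacle'' you flag does not actually arise.
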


For $p=1$, the usual signed Student statistic $t_{n-1}$ satisfies $T_n^2=nt_{n-1}^2/(n-1)$ under our convention. Under symmetry, convergence of its square to $\chi_1^2$ therefore recovers the normal limit in \citet{GGM1997}.

\subsection{The nonsymmetric case}\label{sec:main-nonsymmetric}

In growing dimensions, asymmetry may accumulate in the constant direction of the projection. Indeed, column exchangeability gives the exact identity
\begin{equation}
 \mathbb{E}R
 =\mathbb{E}\tr\mathbf{P}+n(n-1)\mathbb{E}P_{12}
 =m+\mathbb{E}\big(p-\operatorname{rank}\mathbf{X}\big)
      +n(n-1)\mathbb{E}P_{12}.
\end{equation}
The rank correction vanishes when $\mathbb{P}(\mathcal{F})=1$ and is exponentially small under \eqref{eq:aspect-away}; see Lemma \ref{lem:rank} for details. Without symmetry, the accumulated off-diagonal mean $n(n-1)\mathbb{E}P_{12}$ may affect the centering even when the rank correction is negligible. We next establish asymptotic normality without symmetry. The $F$ approximation then follows by comparing the statistic with the normal limit of its Gaussian reference.
\begin{theorem}\label{thm:weak-five-halves}
Suppose the fixed entry law satisfies $\mathbb{E}\xi=0$, $\mathbb{E}\xi^2=1$, and
\begin{equation}\label{eq:weak-five-halves}
 \lim_{x\to\infty}x^{5/2}\mathbb{P}(|\xi|>x)=0.
\end{equation}
If $p/n\to\gamma\in(0,1)$, then
\begin{align}
 \sqrt n\big(\log(1+T_n^2/n)+\log(1-\gamma_n)\big)
 &\dto N \left(0,\frac{2\gamma}{1-\gamma}\right),
 \label{eq:log-CLT}\\
 \sqrt n\left(\frac{T_n^2}{n}-\frac{\gamma_n}{1-\gamma_n}\right)
 &\dto N \left(0,\frac{2\gamma}{(1-\gamma)^3}\right).
\end{align}
Equivalently, the standardized limits \eqref{eq:R-CLT}--\eqref{eq:all-dimension-log} hold in this dimension range. Moreover,
\begin{equation}\label{eq:weak-five-halves-mean}
 \mathbb{E}R=m+o(\sqrt n),\qquad \varepsilon_{n,p}\to0.
\end{equation}
\end{theorem}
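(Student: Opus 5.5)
Everything reduces to $R=\mathbf 1^\top\mathbf P\mathbf 1$ through the identity \eqref{eq:identity}, $1+T_n^2/n=n/R$ on $\mathcal G$. Under $p/n\to\gamma\in(0,1)$ we have $\eta_n\to0$ exponentially fast by Lemma~\ref{lem:rank}. Hence $\log(1+T_n^2/n)+\log(1-\gamma_n)=-\log(R/m)$, and it suffices to prove
\[
\frac{R-m}{\sqrt n}\dto N\big(0,2\gamma(1-\gamma)\big).
\]
The delta method then gives \eqref{eq:log-CLT} and the $T_n^2$ statement. The $F$ approximation follows from the Gaussian case: $T_n^{2,\rm G}/n\stackrel d=G_{p,m}$ satisfies the same CLT with the same centering $p/m$, so both statistics are asymptotically $N(p/m,\,2\gamma/((1-\gamma)^3 n))$. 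Since this normal limit is continuous, Pólya's theorem gives $\varepsilon_{n,p}=d_K(T_n^2/n,G_{p,m})\to0$.

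The plan is to separate the centering from the fluctuation. For the fluctuation, I would write $R-\mathbb E R$ as a martingale over rows, $\sum_{i=1}^p(\mathbb E_i-\mathbb E_{i-1})R$. The projection recursion will be the workhorse. Let $\mathbf P^{(i)}$ be the null-space projection of the first $i$ rows and $\mathbf y_i$ the $i$-th row. Adding that row gives
\[
R_i=R_{i-1}-\frac{(\mathbf y_i^\top\mathbf P^{(i-1)}\mathbf 1)^2}{\mathbf y_i^\top\mathbf P^{(i-1)}\mathbf y_i}.
\]
Each increment is therefore a ratio of quadratic forms in the independent row $\mathbf y_i$ with a projection matrix fixed by the past.

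Next, I would truncate entries at $\sqrt n$ (or $n^{1/2}\epsilon_n$), renormalize, and check that the truncation changes $R$ by $o_{\mathbb P}(\sqrt n)$. Here the weak-$5/2$ condition is more than enough: the expected number of entries exceeding $n^{1/2}$ is $np\,o(n^{-5/4})=o(n^{3/4})$, which is not zero. So rather than discarding those entries, I would argue via row replacement that the rows containing large entries perturb $R$ only slightly. Then I would verify the martingale CLT conditions, that is, a Lindeberg condition and a conditional-variance limit $\sum_i\Var_{i-1}(\cdot)\to 2\gamma(1-\gamma)n$. The denominator $\mathbf y_i^\top\mathbf P^{(i-1)}\mathbf y_i$ concentrates at $\tr\mathbf P^{(i-1)}=n-i+1$. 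In the numerator, $(\mathbf y_i^\top\mathbf P^{(i-1)}\mathbf 1)^2$ has conditional variance of the form $2\|\mathbf P^{(i-1)}\mathbf 1\|^4$ plus a fourth-cumulant term $\sum_j(\mathbf P^{(i-1)}\mathbf 1)_j^4$. Delocalization of $\mathbf P^{(i-1)}\mathbf 1$ makes that second term negligible, which is why the limit variance is free of the fourth moment. For delocalization I would compare, via the row replacement comparison, with the symmetrized law, where the earlier symmetric-case bound $\mathbb E\delta_{n,p}\to0$ (Theorem~\ref{thm:DAN}) is available.

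The main obstacle is the centering $\mathbb E R=m+o(\sqrt n)$, equivalently $n(n-1)\mathbb E P_{12}=o(\sqrt n)$. Under symmetry $\mathbb EP_{12}=0$, so the issue arises only from third-order asymmetry. My approach is to compare each row with a symmetrized copy row by row, using Lindeberg replacement through the recursion above. The first-order difference vanishes, the second-order difference matches because the variances agree, and the third-order term involves $\mathbb E\xi^3$-type contributions weighted by cubic sums of entries of $\mathbf P^{(i-1)}$ and $\mathbf P^{(i-1)}\mathbf 1$. If the law has a third moment, each row costs $O(n^{-1/2})$ after delocalization, for a total of $O(\sqrt n)$. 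That is not yet $o(\sqrt n)$, so the cancellation has to be sharpened. To do so, I would truncate at level $\epsilon\sqrt n$ and exploit the fact that $\mathbb E[\xi^3\mathbbm 1_{|\xi|\le t}]$ grows like $o(t^{1/2})$ under \eqref{eq:weak-five-halves}, while the relevant odd off-diagonal sums of the projection carry an extra $n^{-1/2}$ factor from averaging over random signs. This is exactly where the exponent $5/2$ balances against the $\sqrt n$ scale, since the off-diagonal bias is of order $n^{3-\alpha}$. I expect this step to be the most delicate.
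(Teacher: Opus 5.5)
Your reduction to $R$ via $1+T_n^2/n=n/R$, the row recursion, the comparison with a symmetrized row, and the final P\'olya step all match the paper, and you correctly identify $\mathbb E R=m+o(\sqrt n)$ as the crux. The mechanism you sketch for that step, however, does not work, so there is a genuine gap.

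\textbf{Truncation and the missing comparison mechanism.} Truncating at $\epsilon\sqrt n$ leaves $o(n^{3/4})$ exceedances. Each of them can change the decrement of its row by an amount of order one: as $|z_j|\to\infty$ the decrement tends to $t_j^2/J_{jj}$ (cf.\ the saturation bound $\mathcal Q\le b^2+U^2/V_0$ in the proof of Lemma~\ref{sat:lem:truncation}). Row-replacement bounds therefore give only $o(n^{3/4})$. Showing that these rows ``perturb $R$ only slightly'' needs exactly the signed cancellation you have not supplied, because in Example~\ref{thm:regular-boundary} the bias comes from entries of size $\asymp\sqrt n$. The paper instead truncates at $n^{2/3}$, where $pnq_n=o(n^{1/3})$ makes the crude bound of Lemma~\ref{sat:lem:truncation} sufficient. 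It must then compare rows whose entries reach $n^{2/3}\gg\sqrt n$. An entrywise Taylor expansion of $1/\mathbf z^\top\mathbf J\mathbf z$ is useless there, since $J_{jj}z_j^2$ can dominate the denominator. The missing idea is the exact identity of Lemma~\ref{sr:lem:row-mean}, $\theta=A/F-AO/F^2+A/F^2+A(O-1)^2/(F^2V)$, expanded around the sign-invariant $F=1+\sum_jJ_{jj}z_j^2$. Three things follow:
\begin{itemize}
\item The diagonal part of $A/F$ cancels exactly against the symmetrized row, the other terms are small for each law separately, and no smallness of individual entries is needed.
\item The representation $F^{-1}=\int_0^\infty e^{-sF}\,\mathrm ds$ factorizes the surviving odd term as $\Phi(s)\{(\mathbf u^\top\mathbf m(s))^2-\sum_ju_j^2m_j(s)^2\}$.
\item The tilted means $m_j(s)$ are controlled by $|\mathbb E[\zeta_ne^{-s\zeta_n^2}]|\le\omega(s)s^{3/4}$ with $\omega(s)\to0$.
\end{itemize}

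\textbf{Your count of the bias.} The factorization shows your count is off: the mean discrepancy is \emph{quadratic} in the asymmetry, since it is a product $m_j(s)m_k(s)$. At $s\asymp1/n$ a row therefore costs roughly $(\mathbb E[\xi^3\mathbbm 1_{\{|\xi|\le\sqrt n\}}])^2/n$. For the Pareto laws, the bias $n^{3-\alpha}$ you quote is the \emph{square} of $\mathbb E[\xi^3\mathbbm 1_{\{|\xi|\le\sqrt n\}}]\asymp n^{(3-\alpha)/2}$. One power of the truncated third moment times an extra $n^{-1/2}$ per row would total $O(n^{(3-\alpha)/2})=o(\sqrt n)$ for every $\alpha>2$, so it cannot account for the threshold $5/2$.

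\textbf{The fluctuation part.} There are two further problems. First, summing the conditional variances of the immediate decrements ($\approx2a^2\approx2$ each) gives $2\gamma n$, not $2\gamma(1-\gamma)n$. The martingale differences $(\mathbb E_i-\mathbb E_{i-1})R$ carry feedback through the drift $\mathbb E_{i-1}\Delta_i\approx R_{i-1}/(n-i+1)$. Tracking that feedback requires the asymmetric bias conditionally, at every step. Second, $\mathbb E\delta_{n,p}\to0$ for the symmetrized law controls $\diag\mathbf P$, not the entries of $\mathbf P^{(i-1)}\mathbf 1$ under nonsymmetric or mixed row laws. For those, the paper needs the leave-one-column fourth-moment recursion of Lemma~\ref{lem:projection-moments}.

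The paper avoids a martingale CLT altogether. Lindeberg replacement with smooth test functions transfers the whole law from $R(\mathbf Z^\circ)$, whose CLT follows from Lemma~\ref{lem:sign} and the two-column Woodbury diagonal bound. Consequently, only the conditional first moments of the decrements must agree to $o(n^{-1/2})$ and the second moments to $o(1)$, together with uniform integrability of $\Delta^2$.
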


In particular, for a centered, variance-one law satisfying $\mathbb{P}(|\xi|>x)\sim c x^{-\alpha}\mathcal{L}(x)$ with $c>0$ and a positive slowly varying function $\mathcal{L}(x)$, then the conclusions hold for $\alpha>5/2$, and also for $\alpha=5/2$ when $\mathcal{L}(x)\to0$. The following example shows that a positive tail limit at the boundary already permits failure of the $F$ approximation for a fixed nonsymmetric finite-variance law.

\begin{example}[Failure of $F$ approximation below the boundary]\label{thm:regular-boundary}
For $2<\alpha\le5/2$, let $V_\alpha$ have density
$\alpha v^{-\alpha-1}\mathbbm{1}_{\{v\ge1\}}$, and define
\begin{equation}\label{eq:Pareto}
 \xi_\alpha=\frac{V_\alpha-\mu_\alpha}{\sigma_\alpha},\qquad
 \mu_\alpha=\frac\alpha{\alpha-1},\qquad
 \sigma_\alpha^2=\frac\alpha{(\alpha-1)^2(\alpha-2)}.
\end{equation}
Then $\xi_\alpha$ is centered, has variance one, is non-atomic and
nonsymmetric, and
\[
 x^\alpha\mathbb{P}(|\xi_\alpha|>x)\rightarrow \sigma_\alpha^{-\alpha}
\]
as $x\rightarrow \infty$.
For every $p/n\to\gamma\in(0,1)$, the corresponding approximation
error satisfies
\begin{equation}\label{eq:regular-boundary-separation}
 \begin{cases}
  \varepsilon_{n,p}\rightarrow 1,&2<\alpha<5/2,\\[2pt]
  \displaystyle\liminf_{n\to\infty}\varepsilon_{n,p}>0,&\alpha=5/2.
 \end{cases}
\end{equation}
In the first case, we have $(R-m)/\sqrt n\pto-\infty$.
\end{example}

In view of Theorem \ref{thm:weak-five-halves} and Example \ref{thm:regular-boundary}, the vanishing weak-$5/2$ tail condition is sharp as a sufficient condition for $F$ approximation without symmetry. However, this does not make it necessary for each nonsymmetric law, as the $F$ approximation can still be valid for a nonsymmetric law with symmetric tails below this boundary. The following example shows that the asymmetry on bounded support itself does not affect the $F$ approximation, while the asymmetry in the tails matters.  
\begin{example}[$F$ approximation with symmetric tails below the boundary]\label{rem:balanced-tails}
Fix $12/5<\alpha\le5/2$. Let $S_\alpha$ be symmetric with
$\mathbb{P}(|S_\alpha|>x)=x^{-\alpha}$ for $x\ge1$.
Let $A$ equal $1$ with probability $2/3$ and $-2$ with probability
$1/3$, let $U$ be independent and uniform on $[-1/4,1/4]$, and set
$\eta=A+U$. Define $Z$ by the mixture law
$\frac12\mathcal{L}(S_\alpha)+\frac12\mathcal{L}(\eta)$, and set
\begin{equation}\label{lt:eq:balanced-mixture}
 \xi=Z/\sigma,\qquad
 \sigma^2=\mathbb{E}Z^2
 =\frac{\alpha}{2(\alpha-2)}+\frac{97}{96}.
\end{equation}
It follows that $\xi$ is centered, has variance one, is non-atomic and nonsymmetric. Its asymmetric component has bounded support, while its two heavy tails agree exactly: for all sufficiently large $x$,
\begin{equation}\label{lt:eq:balanced-mixture-tails}
 \mathbb{P}(\xi>x)=\mathbb{P}(\xi<-x)
 =\frac14\sigma^{-\alpha}x^{-\alpha},\qquad
 \mathbb{P}(|\xi|>x)=\frac12\sigma^{-\alpha}x^{-\alpha}.
\end{equation}
Nevertheless, for every $p/n\to\gamma\in(0,1)$, we have
\begin{equation}\label{lt:eq:balanced-mixture-F}
 \frac{R-m}{\sqrt{2mp/n}}\dto N(0,1),\qquad
 \mathbb{E}R=m+o(\sqrt n),\qquad
 \varepsilon_{n,p}\rightarrow 0.
\end{equation}
Hence the condition in
Theorem~\ref{thm:weak-five-halves} is not necessary for an individual
nonsymmetric law. 
\end{example}

\subsection{Proof strategy}\label{sec:strategy}
The proof relies on a geometric representation of Hotelling's statistic with conditional comparisons of projection quadratic forms. The basic tools are the Sherman--Morrison and Woodbury identities, concentration and lower-tail estimates for quadratic forms \citep{Tikhomirov2016,DabagiaFernandez2025}, and Lindeberg replacement \citep{lindeberg1922neue}. 
The main technical ingredients are a comparison that preserves the diagonal cancellation under sign symmetrization (Lemma~\ref{sr:lem:row-mean}), recursive projection-moment bounds and a truncation estimate (Lemmas~\ref{lem:projection-moments} and \ref{sat:lem:truncation}), and a positive two-column identity for the directional bias (Lemma~\ref{lp:lem:conditional-bias}). These ingredients connect the $F$ approximation to diagonal concentration and DAN under symmetry, establish the Gaussian-centered central limit theorem under the weak-$5/2$ condition without symmetry, and explain the criticality of the exponent $5/2$.

\subsubsection{A geometric representation}

Our starting point is the identity \eqref{eq:identity}, which expresses Hotelling's statistic through the projection of the constant direction onto the null space of the data matrix. Let $\mathbf{e}=\mathbf{1}/\sqrt n$, $\mathbf{a}=\mathbf{X}\mathbf{e}$, and $\mathbf{C}=\mathbf{X}\mathbf{X}^\top$. On $\mathcal{G}$, both $\mathbf{C}$ and $\mathbf{C}-\mathbf{a}\mathbf{a}^\top=n\mathbf{S}$ are positive definite. Thus $U=\mathbf{a}^\top\mathbf{C}^{-1}\mathbf{a}<1$, and the Sherman--Morrison formula gives
\[
\frac{T_n^2}{n}
=\mathbf{a}^\top(\mathbf{C}-\mathbf{a}\mathbf{a}^\top)^{-1}\mathbf{a}
=\frac{U}{1-U}
=\frac{1}{\mathbf{e}^\top\mathbf{P}\mathbf{e}}-1
=\frac nR-1,
\]
where $\mathbf{e}^\top\mathbf{P}\mathbf{e}=1-U=R/n$ by \eqref{eq:PQR}. Moreover, on $\mathcal{F}$, the condition $R>0$ is equivalent to $\mathcal{G}$.

This representation reduces the $F$ approximation to a comparison between the directional projection statistic $R=\mathbf{1}^\top\mathbf{P}\mathbf{1}$ and its Gaussian counterpart $R^{\rm G}=\mathbf{1}^\top\mathbf{P}^{\rm G}\mathbf{1}$. The remaining arguments study how the entry law affects this fixed-direction geometry.

\subsubsection{The symmetric cases}
Under symmetry, multiplying each column by an independent Rademacher sign preserves the data law. Consequently,
\begin{equation}\label{eq:strategy-sign}
R\stackrel d=\boldsymbol{\varepsilon}^\top\mathbf{P}\boldsymbol{\varepsilon},
\qquad
\Var_{\boldsymbol{\varepsilon}}
\bigl(\boldsymbol{\varepsilon}^\top\mathbf{P}\boldsymbol{\varepsilon}
\mid\mathbf{P}\bigr)
=2\frac{mp}{n}(1-\delta_{n,p}),
\end{equation}
where the distributional identity is unconditional, and the variance formula holds on $\mathcal{F}$. This converts the fixed direction into independent signs while retaining the random projection. 
Lemma~\ref{lem:sign} provides both the conditional distributional comparison and a fourth-moment bound. The comparison yields the approximation when the relative diagonal concentration error tends to zero. For the converse, the fourth-moment bound gives uniform integrability, so convergence to the Gaussian reference forces convergence of the second moment and hence diagonal concentration. Proposition~\ref{thm:comparison} makes this reduction precise, including the cases of fixed rank and fixed nullity.

We then relate diagonal concentration to DAN. Truncation at the natural normal-attraction scale produces a centered variance-one triangular array with fourth moment $o(n)$. When $p/n\to0$, a Frobenius estimate controls the row-space projection. When $p/n\to\gamma\in(0,1)$, Lemma~\ref{lem:quadratic-LLN} and a fixed regularization parameter yield a scalar equation for the average diagonal of the regularized projection. Since only $o(p)$ rows are affected by truncation in expectation, comparison through the common row spaces transfers the diagonal estimate to the original matrix. Conversely, a law outside DAN admits a subsequence along which sufficiently many rows are dominated by a single coordinate. This contradicts relative diagonal concentration; the factor $n/p$ keeps the argument effective even in fixed dimension.

The near-square case requires finer control of the projection geometry. When $m=n-p$ is small, a squared-diagonal error of order $o(n)$ may be too large relative to $m$. Under finite variance, regularization and the lower-tail estimate in Lemma~\ref{lem:positive-form} instead give
$\max_{1\le j\le n}P_{jj}\to0$.
For fixed nullity, a separate row-exposure argument establishes the required rank property for atomic laws. These estimates are combined with the same sign comparison in Section~\ref{sec:proof-near-square}.

\subsubsection{The nonsymmetric cases}

For nonsymmetric laws, in view of the symmetric cases in Theorem \ref{thm:DAN}, we compare the projection statistic $R$ with its counterpart under entrywise sign symmetrization, conditioning at each row replacement on the common rows. Under the weak tail condition \eqref{eq:weak-five-halves}, we begin with a truncation, smoothing, and standardization argument in Lemma~\ref{lem:five-halves-truncation}. These operations do not affect the asymptotic behavior of \(R\) at the CLT scale; see Lemma~\ref{sat:lem:truncation} and Section~\ref{sec:prf:weak52}.
For further comparison,
motivated by \citet{LiLiuXieZhou2026}, we use the exact row update
\[
R_{i+1}
=R_i-
\frac{(\mathbf{1}^\top\mathbf{P}_i\mathbf{z}_{i+1})^2}
{\mathbf{z}_{i+1}^\top\mathbf{P}_i\mathbf{z}_{i+1}},
\]
where $\mathbf{P}_i$ is determined by the first $i$ rows and $\mathbf{z}_{i+1}$ is independent of $\mathbf{P}_i$. Thus, the comparison boils down to controlling the decrement by adding a new row. 
 
For a projection $\mathbf{J}$ of rank $r\asymp n$, set $\mathbf{u}=\mathbf{J}\mathbf{1}/\sqrt r$ and write the row decrement as $\Delta=rA/V$, where $A=(\mathbf{u}^\top\mathbf{z})^2$ and $V=\mathbf{z}^\top\mathbf{J}\mathbf{z}$. The key step is to split $V=D+O$ into its diagonal and off-diagonal parts and expand $A/V$ around $F=1+D$. The leading diagonal contribution is unchanged under entrywise sign symmetrization $\mathbf{z}\mapsto\mathbf{z}^\circ$, since it depends only on the squared entries, whose expectation therefore cancels exactly.
To estimate the remaining terms, we adapt the Laplace-integral technique used in higher-moment calculations for self-normalized sums \citep{albrecher2007asymptotic}:
\[
F^{-q}=\frac1{\Gamma(q)}
\int_0^\infty t^{q-1}e^{-tF}\,\mathrm{d}t,
\qquad q>0.
\]
This representation retains the diagonal denominator and, by independence, converts mixed expectations into products of rescaled moments. The residual asymmetry is captured by the signed transform
$g_n(t)=\mathbb{E}(\zeta_n e^{-t\zeta_n^2})$.
The uniform estimate $g_n(t)=o(t^{3/4})$ supplied by Lemma~\ref{lem:five-halves-truncation} controls the principal signed term. The same representation estimates the denominator corrections and remainder, giving the conditional mean comparison in Lemma~\ref{sr:lem:row-mean}.

The intermediate matrices have independent entries whose laws may differ across rows. Lemma~\ref{lem:projection-moments} uses a leave-one-column identity and a recursive fourth-moment estimate to control the coordinates of $\mathbf{u}$ uniformly over the intermediate row-law choices,
\[
\mathbb{E}\sum_{j=1}^n  u_j^4\le C/n,
\qquad
\mathbb{E}A^2
=3\mathbb{E}\|\mathbf{u}\|^4+O(k_n/n).
\]
Thus the fourth-moment error enters through $k_n/n\to0$. Projection lower-tail estimates control the denominator, while a subsequential Gaussian-mixture approximation and fourth-moment convergence establish uniform integrability of $A^2$. These ingredients yield $\Delta-A\to0$ in $L^2$ and uniform integrability of $\Delta^2$; see Lemma~\ref{sym:lem:increments}.

We then apply a second-order Lindeberg replacement to smooth test functions of the normalized statistic. The conditional mean comparison gives a decrement-mean error $o(n^{-1/2})$. The conditional second moments of $A$ match exactly under the two laws, and the $L^2$ approximation gives a second-moment error $o(1)$ after averaging over the common rows. At the $\sqrt n$ scale, the first two Taylor terms therefore contribute $o(n^{-1})$ per replacement. Uniform integrability of the squared decrements controls the Taylor remainders, so summing over $O(n)$ rows gives a vanishing total error. The Gaussian-centered limit consequently transfers from the symmetric reference to the comparison array, which further transfers the limit to the original fixed law, and \eqref{eq:identity} yields the Hotelling limits and the $F$ approximation.

\subsubsection{Criticality of the $5/2$ boundary}

The counterexample in Example~\ref{thm:regular-boundary} is governed by the off-diagonal mean of the full projection. For the conditionally truncated Pareto matrix with $d\asymp n$ rows, column exchangeability and the fixed trace give
\[
(n-d)-\mathbb{E}R=n(n-1)\mathbb{E}L_{12}.
\]
The key device is a two-column Woodbury identity: the conditional off-diagonal mean is expressed as a sum of nonnegative squares. The contribution of entries near $\sqrt n$ then gives
$\mathbb{E}L_{12}\ge c n^{1-\alpha}$, producing a directional bias of at least $c n^{3-\alpha}$; see Lemma~\ref{lp:lem:conditional-bias}. Moreover, Lemma~\ref{lem:projection-moments} bounds the variance of the retained statistic by $O(n)$. 

For $2<\alpha<5/2$, the bias dominates the $\sqrt n$ fluctuation scale of the Gaussian reference. Removing exceptional rows preserves a one-sided projection ordering, and conditional Chebyshev estimates yield distance tending to one. At $\alpha=5/2$, the bias is at least a fixed multiple of $\sqrt n$. A bounded monotone truncating function converts this bias and the variance bound into a persistent expectation gap, which gives a strictly positive limiting lower bound for the Kolmogorov distance. We refer to Appendix~\ref{sec:verification} for more details.

Thus $5/2$ is sharp for a universal sufficient condition formulated through a vanishing power bound on the absolute tail. However, it is not necessary for every nonsymmetric law. In Example~\ref{rem:balanced-tails}, the symmetric heavy-tail component contributes zero to the signed Laplace transform, while the bounded asymmetric component contributes only $O(t)$. The conditional comparison therefore remains effective at the absolute-tail boundary, and the $F$ approximation still holds.

\section{Proofs of the symmetric cases}\label{sec:proofmainresults}
This section presents the proofs of the symmetric cases, including Theorem \ref{thm:DAN}, Corollary \ref{thm:all-dimensions}, and Corollary \ref{cor:normal}. We begin with an estimate for the probabilities of full-rank events, which is used frequently in the sequel; the proof is given in Appendix \ref{app:lem:rank}.
\begin{lemma}\label{lem:rank}
	Recall the definitions of $\mathcal{F}$ and $\mathcal{G}$ in Section~\ref{sec:definitions}. Let $	q:=\sup_{x\in\mathbb{R}}\mathbb{P}(\xi=x)<1$ be the maximal atom of a nondegenerate entry law $\xi$. Then
	\begin{equation}
	\eta_n=\mathbb{P}(\mathcal{G}^c)\le q^{n-p}/(1-q),
	\end{equation}
	with value zero if $q=0$. Moreover, $\mathbb{P}(\mathcal{F}^c)\le\eta_n$.
\end{lemma}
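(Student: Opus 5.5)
The plan is to prove the bound on $\eta_n$ by exposing the columns of the augmented matrix $\mathbf{Y}=\binom{\mathbf{1}^\top}{\mathbf{X}}\in\mathbb{R}^{(p+1)\times n}$ one at a time and controlling how often the rank fails to grow. The inequality $\mathbb{P}(\mathcal{F}^c)\le\eta_n$ is immediate from $\mathcal{G}\subseteq\mathcal{F}$ (if $\operatorname{rank}\mathbf{X}<p$ then adjoining one row gives rank at most $p$), so the work is the bound for $\mathcal{G}^c$.

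The key step is a one-step anti-concentration bound. Let $\mathbf{y}_j=(1,\mathbf{x}_j^\top)^\top$ be the $j$th column of $\mathbf{Y}$, and let $V_{j}=\operatorname{span}(\mathbf{y}_1,\ldots,\mathbf{y}_{j})$, with $r_j=\dim V_j$ and $r_0=0$. We have $r_1=1$ always, because the first coordinate equals $1$. Suppose $V_{j-1}$ is a proper subspace of $\mathbb{R}^{p+1}$. Choose a nonzero normal vector $\mathbf{w}=(w_0,w_1,\ldots,w_p)$ to $V_{j-1}$; it is measurable with respect to the first $j-1$ columns. If $w_1=\cdots=w_p=0$, then $\mathbf{w}^\top\mathbf{y}_j=w_0\neq0$, so the rank increases surely. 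Otherwise pick an index $k$ with $w_k\ne0$ and condition on all coordinates of $\mathbf{x}_j$ except $X_{kj}$. The event $\mathbf{y}_j\in V_{j-1}$ forces $\mathbf{w}^\top\mathbf{y}_j=0$, which pins $X_{kj}$ to a single value. Since $X_{kj}$ is independent of everything conditioned on, this event has conditional probability at most $q$. Hence
\[
\mathbb{P}(r_j=r_{j-1}\mid\mathbf{y}_1,\ldots,\mathbf{y}_{j-1})\le q \quad\text{on }\{r_{j-1}<p+1\}.
\]

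Next, count the failures. The event $\mathcal{G}^c$ means $r_n\le p$. Since $r_1=1$, columns $2,\ldots,n$ must produce at most $p-1$ increments, so among these $n-1$ trials at least $n-p$ are failures, each occurring while the rank is still deficient. By the conditional bound above, the failure indicators are stochastically dominated by independent Bernoulli$(q)$ variables, say via a coupling with uniforms. Equivalently, one can run the trials until the full rank $p+1$ is reached and count the total number of failures $N$ before that point: $N$ is dominated by a sum of $p$ independent Geometric variables, each counting failures before one success. It is simplest to bound $\mathbb{P}(\text{at least } n-p \text{ failures among } n-1 \text{ Bernoulli}(q)\text{ trials with at most } p-1 \text{ successes})$, which equals $\mathbb{P}(\mathrm{Bin}(n-1,1-q)\le p-1)$.

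The main obstacle is turning this binomial tail into exactly $q^{n-p}/(1-q)$. The first bound to try is $\mathbb{P}(\mathrm{Bin}(n-1,1-q)\le p-1)=\sum_{s\le p-1}\binom{n-1}{s}(1-q)^sq^{n-1-s}$, but its binomial coefficients grow with $n$, so it does not reduce to a clean power of $q$. I would instead look for a sharper grouping in which each row contributes essentially one atom. Expose the matrix row by row: add row $i$ of $\mathbf{X}$ to the rows already present, starting from the row $\mathbf{1}^\top$. A new row lies in the existing row space $W$, of dimension $i$, only if its coordinates outside a set of $i$ pivot columns are determined by its pivot coordinates. That leaves $n-i$ free coordinates, each fixed to a single value, so the conditional probability is at most $q^{n-i}$. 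A union bound over $i=1,\ldots,p$ then gives
\[
\eta_n\le\sum_{i=1}^p q^{n-i}\le \frac{q^{n-p}}{1-q}.
\]
This row-wise argument is the one I would commit to, and the column argument above is unnecessary. When $q=0$, every term vanishes, so $\eta_n=0$.
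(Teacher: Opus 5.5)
Your row-wise argument is correct and is essentially the paper's proof: the paper also exposes the rows $\mathbf{r}_i^\top$ after $\mathbf{1}^\top$, uses pivot coordinates on which the coordinate projection of the current span is injective to get $\mathbb{P}(\mathbf{r}_i^\top\in\mathcal S_{i-1}\mid\text{previous rows})\le q^{n-\dim\mathcal S_{i-1}}$, and sums $\sum_{i=1}^p q^{n-i}\le q^{n-p}/(1-q)$. The only difference is minor. The paper sums over the disjoint first-failure events, on which $\dim\mathcal S_{i-1}=i$ exactly, whereas in your union bound ``dimension $i$'' should read ``dimension at most $i$''; this is harmless, since $q^{n-\dim W}\le q^{n-i}$.
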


On the event $\mathcal{G}$, in view of \eqref{eq:PQR} and the identity \eqref{eq:identity}, it suffices to study the quadratic form  $R=\mathbf{1}^{\top}\mathbf{P}\mathbf{1}$. Let $\boldsymbol{\varepsilon}=(\varepsilon_1,\ldots,\varepsilon_n)^\top$ have independent Rademacher coordinates, independently of $\mathbf{X}$, and set
$ \mathbf{D}_{\boldsymbol{\varepsilon}}=\diag(\varepsilon_1,\ldots,\varepsilon_n)$.
If the entries are symmetric, $\mathbf{X}\mathbf{D}_{\boldsymbol{\varepsilon}}\stackrel d=\mathbf{X}$, and the null-space projection satisfies
\begin{equation*}
\mathbf{P}(\mathbf{X}\mathbf{D}_{\boldsymbol{\varepsilon}})=\mathbf{D}_{\boldsymbol{\varepsilon}} \mathbf{P}(\mathbf{X})\mathbf{D}_{\boldsymbol{\varepsilon}},
\end{equation*}
by $\mathbf{D}_{\boldsymbol{\varepsilon}} \mathbf{D}_{\boldsymbol{\varepsilon}}=\mathbf{I}_n$. Consequently, $R\stackrel d=R^{\rm s}=\boldsymbol{\varepsilon}^\top \mathbf{P}\boldsymbol{\varepsilon}$ under symmetry.
Thus, the analysis of the quadratic form $\mathbf{1}^{\top}\mathbf{P}\mathbf{1}$ boils down to a Rademacher quadratic form $\boldsymbol{\varepsilon}^\top \mathbf{P}\boldsymbol{\varepsilon}$, whose centered version is $\boldsymbol{\varepsilon}^\top (\mathbf{P}-\operatorname{diag}\mathbf{P})\boldsymbol{\varepsilon}$ since $\boldsymbol{\varepsilon}^\top (\operatorname{diag}\mathbf{P})\boldsymbol{\varepsilon}=m$ on the event $\mathcal{G}$. The following lemma gives its distributional approximation, and the fourth-moment bound plays an essential role in the necessity argument. The proof, deferred to Appendix \ref{app:prf_sign}, relies on a Lindeberg replacement argument and a smooth comparison for distribution functions.
\begin{lemma}\label{lem:sign}
	Let $\mathbf{D}\in\mathbb{R}^{n\times n}$ be a deterministic real symmetric matrix with zero diagonal, let $s^2=2\tr \mathbf{D}^2>0$, and let $\boldsymbol{\varepsilon}$ have independent Rademacher coordinates. Then
	\begin{align}
	d_K(\boldsymbol{\varepsilon}^\top \mathbf{D}\boldsymbol{\varepsilon}/s,N(0,1))
	&\le C(\|\mathbf{D}\|/s)^{1/4},\label{eq:sign-bound}\\
	\mathbb{E}(\boldsymbol{\varepsilon}^\top \mathbf{D}\boldsymbol{\varepsilon})^4
	&\le60(\tr \mathbf{D}^2)^2.\label{eq:sign-fourth}
	\end{align}
	If $\mathbf{U}\in\mathbb{R}^{n\times k}$ has orthonormal columns, $k$ is fixed, and $b=\max_{1\le j\le n}\|\mathbf{U}^\top \mathbf{e}_j\|^2$, then
	\begin{equation}\label{eq:sign-fixed}
	d_K(\|\mathbf{U}^\top\boldsymbol{\varepsilon}\|^2,\chi_k^2)\le C_k b^{1/8}.
	\end{equation}
\end{lemma}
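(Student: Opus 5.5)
We prove the three claims of Lemma~\ref{lem:sign} separately, starting with the elementary moment bound \eqref{eq:sign-fourth}. Write $W=\boldsymbol{\varepsilon}^\top\mathbf{D}\boldsymbol{\varepsilon}=2\sum_{i<j}D_{ij}\varepsilon_i\varepsilon_j$. Expanding $\mathbb{E}W^4$, only products in which every index appears an even number of times survive. These are the paired terms $(ij)(ij)(kl)(kl)$ and the cycle-type terms $D_{ij}D_{jk}D_{kl}D_{li}$. The paired terms contribute at most $3(2\sum_{i<j}D_{ij}^2)^2\cdot$const, i.e.\ a multiple of $(\tr\mathbf{D}^2)^2$. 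The cycle terms sum to at most a multiple of $\tr\mathbf{D}^4\le(\tr\mathbf{D}^2)^2$. Careful bookkeeping of the combinatorial constants (Gaussian-chaos style: $\mathbb{E}W^4=3(\mathbb{E}W^2)^2+c\,\tr\mathbf{D}^4$ minus nonnegative diagonal corrections, with $\mathbb{E}W^2=2\tr\mathbf{D}^2$) gives a bound of the form $12(\tr\mathbf{D}^2)^2+48\tr\mathbf{D}^4\le60(\tr\mathbf{D}^2)^2$. This is routine.

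For \eqref{eq:sign-bound} we use a Lindeberg replacement followed by smoothing. Replace the Rademachers one at a time by independent standard Gaussians $g_i$. Because $\mathbf{D}$ has zero diagonal, $W$ is multilinear, so for a test function $h$ with bounded third derivative each swap error is controlled by the third-order Taylor term:
\[
|\mathbb{E}h(W/s)-\mathbb{E}h(W_g/s)|\le C\|h'''\|_\infty s^{-3}\sum_i\mathbb{E}|\textstyle\sum_jD_{ij}y_j|^3\le C\|h'''\|_\infty s^{-3}\sum_i(\sum_jD_{ij}^2)^{3/2}.
\]
Here $y$ denotes the mixed Rademacher/Gaussian vector and hypercontractivity controls the moments. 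Bounding $\sum_i r_i^{3/2}\le\max_i r_i^{1/2}\sum_ir_i$ with $r_i=\sum_jD_{ij}^2\le\|\mathbf{D}\|^2$ gives the error $C\|h'''\|\,\|\mathbf{D}\|/s$. Next diagonalize $\mathbf{D}=\sum\lambda_k\mathbf{v}_k\mathbf{v}_k^\top$. Then $W_g/s=\sum\lambda_k(\chi^2_{1,k}-1)/s$, using $\tr\mathbf{D}=0$, and a Berry--Esseen bound for weighted centered chi-squares gives $d_K\le C\max|\lambda_k|/s\le C\|\mathbf{D}\|/s$. Finally, smooth the indicator at bandwidth $\tau$, which costs $\tau^{-3}\|\mathbf{D}\|/s+\tau$ plus the normal anti-concentration $C\tau$. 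Optimizing $\tau=(\|\mathbf{D}\|/s)^{1/4}$ yields \eqref{eq:sign-bound}.

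For \eqref{eq:sign-fixed} we run the same argument in $\mathbb{R}^k$ for the vector $\mathbf{U}^\top\boldsymbol{\varepsilon}=\sum_j\varepsilon_j\mathbf{U}^\top\mathbf{e}_j$, whose covariance is $\mathbf{I}_k$. A multivariate Lindeberg swap with smooth $h(\|\cdot\|^2)$ costs at most $C_k\sum_j\|\mathbf{U}^\top\mathbf{e}_j\|^3\le C_kb^{1/2}\sum_j\|\mathbf{U}^\top\mathbf{e}_j\|^2=C_kkb^{1/2}$. The Gaussian replacement is exactly $\chi^2_k$. Smoothing in $t$ at bandwidth $\tau$ uses the bounded density of $\chi_k^2$ (for $k\ge3$). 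For $k=1,2$ the density is unbounded near $0$, so we use the anti-concentration $\mathbb{P}(\chi_k^2\in[t,t+\tau])\le C\tau^{1/2}$. Balancing $\tau^{-3}b^{1/2}$ against $\tau^{1/2}$ gives $\tau=b^{1/7}$ and error $b^{1/14}$. This is not $b^{1/8}$, so we would instead smooth directly in $\|\cdot\|$ (the Gaussian norm has bounded density in $\sqrt{t}$ scale when $k\ge1$), giving $\tau^{-3}b^{1/2}+\tau$ and hence $b^{1/8}$.

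The main obstacle is the hypercontractive third-moment bound for mixed Rademacher/Gaussian linear forms, together with the correct smoothing scale for small $k$. Everything else is routine.
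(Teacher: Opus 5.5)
Your proposal is correct. For \eqref{eq:sign-bound} and \eqref{eq:sign-fixed} it is the paper's argument: a one-coordinate Lindeberg swap to Gaussians that exploits the zero diagonal, Berry--Esseen for $\sum_j\lambda_j(g_j^2-1)/s$, and smoothing at bandwidth $(\|\mathbf{D}\|/s)^{1/4}$. For fixed $k$, it is a vector swap costing $C_k\sqrt b\,h^{-3}$, followed by smoothing in the norm (where $\|\mathbf{G}_k\|$ has bounded density) with $h=b^{1/8}$.

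A few small remarks on those two parts:
\begin{itemize}
\item What you call the main obstacle is elementary. Every coordinate, Rademacher or Gaussian, has fourth moment at most $3$, so direct expansion gives $\mathbb{E}V_j^4\le 3(\sum_kD_{jk}^2)^2$. This is exactly what the paper uses; no hypercontractivity is needed.
\item The $\chi_2^2$ density $\tfrac12e^{-x/2}$ is bounded; only $k=1$ is singular. This is moot once you smooth in $\|\cdot\|$.
\item If you smooth with $\psi((\|\mathbf{x}\|-r)/\tau)$, take $\psi$ constant on some $(-\infty,c]$ with $c>0$. Then the $\|\mathbf{x}\|^{-2}$ factors in the third derivative of the norm stay $O(\tau^{-2})$ on the support. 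The paper's mollified ball indicators $\mathbbm{1}_{B_r}*\rho_h$ give $\|\mathrm{D}^3f\|_\infty\le C_kh^{-3}$ without this check.
\end{itemize}

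The genuine difference is in \eqref{eq:sign-fourth}, where you expand the Rademacher chaos directly. Here is the bookkeeping you left as routine, with $r_i=\sum_jD_{ij}^2$.
\begin{itemize}
\item The repeated-pair terms give $12(\tr\mathbf{D}^2)^2-16\sum_{i,j}D_{ij}^4$.
\item The $4$-cycles on distinct indices give $48\bigl(\tr\mathbf{D}^4-2\sum_ir_i^2+\sum_{i,j}D_{ij}^4\bigr)$.
\item Hence $\mathbb{E}W^4=12(\tr\mathbf{D}^2)^2+48\tr\mathbf{D}^4-96\sum_ir_i^2+32\sum_{i,j}D_{ij}^4$.
\end{itemize}
The correction is nonpositive because $\sum_{i,j}D_{ij}^4\le\sum_ir_i^2$, so your claim holds. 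That sign check is the only real content, since the distinct-index cycle sum is signed and not termwise comparable with $\tr\mathbf{D}^4$.

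The paper avoids this entirely. Each Rademacher mixed moment is $0$ or $1$, and the matching Gaussian moment vanishes in the same cases and is otherwise at least $1$. Hence $\mathbb{E}(\boldsymbol{\varepsilon}^\top\mathbf{D}\boldsymbol{\varepsilon})^4\le\mathbb{E}(\mathbf{g}^\top\mathbf{A}\mathbf{g})^4$ with $\mathbf{A}=(|D_{jk}|)$. Diagonalizing $\mathbf{A}$ (which has zero trace) gives exactly $12(\tr\mathbf{A}^2)^2+48\tr\mathbf{A}^4$.

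Your route yields an exact identity involving the smaller $\tr\mathbf{D}^4\le\tr\mathbf{A}^4$. The paper's route is shorter and free of combinatorics. Both finish with $\tr\mathbf{D}^4\le(\tr\mathbf{D}^2)^2$, respectively $\tr\mathbf{A}^4\le(\tr\mathbf{A}^2)^2=(\tr\mathbf{D}^2)^2$.
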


With Lemma \ref{lem:sign} and the symmetry condition, the following proposition illustrates the $F$ approximation error bound $\varepsilon_{n,p}$ in terms of the relative projection-diagonal concentration error $\mathbb{E}\delta_{n,p}$, which is a key ingredient in showing sufficiency and necessity in Theorem \ref{thm:DAN}. Moreover, in the near-square case $v\asymp 1$ in Corollary \ref{thm:all-dimensions}, the $F$ approximation error is bounded in terms of the maximum diagonal entry $\max_{1\le j\le n}P_{jj}$. The proof is deferred to Appendix \ref{app:prf_proposition}.

\begin{proposition}\label{thm:comparison}
	Recalling $v$ in \eqref{eq:PQR}, for a symmetric entry law and $v\ge1$,
	\begin{equation}\label{eq:comparison}
	\varepsilon_{n,p}
	\le C\big(v^{-1/8}+\mathbb{E}\delta_{n,p}+n^{-1}+\eta_n\big).
	\end{equation}
	For fixed $p=k$, the right-hand side can be replaced by
\begin{equation}\label{eq:fixed-dimension-bound}
	C_k\big(\mathbb{E}[\mathbbm{1}_{\mathcal{F}}(\max_{1\le j\le n}L_{jj})^{1/8}]+n^{-1/4}+\eta_n\big).
\end{equation}
	For fixed $m=k$, it can instead be replaced by
	\begin{equation}\label{eq:fixed-nullity-bound}
	C_k\big(\mathbb{E}[\mathbbm{1}_{\mathcal{F}}(\max_{1\le j\le n}P_{jj})^{1/8}]+n^{-1/4}+\eta_n\big).
	\end{equation}
	Consequently, under symmetry and for any sequence $1\le p<n$,
	\begin{equation}\label{eq:all-dimension-geometry}
	\varepsilon_{n,p}\to0
	\quad\text{is equivalent to}\quad
	\eta_n\to0\ \hbox{and}\ \mathbb{E}\delta_{n,p}\to0.
	\end{equation}
\end{proposition}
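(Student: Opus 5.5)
The plan is to pass to the sign-symmetrized statistic and compare conditionally on $\mathbf{P}$. Under symmetry $R\stackrel d=R^{\rm s}=\boldsymbol{\varepsilon}^\top\mathbf{P}\boldsymbol{\varepsilon}$, and on $\mathcal{F}$ we have $R^{\rm s}=m+\boldsymbol{\varepsilon}^\top\mathbf{D}\boldsymbol{\varepsilon}$ with $\mathbf{D}=\mathbf{P}-\diag\mathbf{P}$. By \eqref{eq:identity}, the map $R\mapsto n/R-1$ is monotone on $\mathcal{G}$, and the Gaussian reference has the same form, $T^{2,\rm G}_n/n=n/R^{\rm G}-1$. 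Outside $\mathcal{G}$ we only pay $\eta_n$. Hence
\[
\varepsilon_{n,p}\le d_K(R^{\rm s},R^{\rm G})+C\eta_n .
\]
The Gaussian case itself satisfies $R^{\rm G}\stackrel d=\boldsymbol{\varepsilon}^\top\mathbf{P}^{\rm G}\boldsymbol{\varepsilon}$, so both sides are Rademacher chaos in a random projection. I would compare each with $m+\sqrt{2v}\,N(0,1)$ via the triangle inequality. We have $\Var(R^{\rm G})=2mp/(n+2)$, and $d_K(R^{\rm G},N(m,2v))\le Cv^{-1/2}$ follows from the chi-square ratio representation.

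For the non-Gaussian side I would condition on $\mathbf{P}$. Here $\tr\mathbf{D}^2=\sum_{j\ne k}P_{jk}^2=m-\sum_jP_{jj}^2=v(1-\delta_{n,p})$, so $s^2=2v(1-\delta_{n,p})$. The bound $\|\mathbf{D}\|\le\|\mathbf{P}\|+\max_j P_{jj}\le 2$ gives, by \eqref{eq:sign-bound},
\[
d_K\bigl(\boldsymbol{\varepsilon}^\top\mathbf{D}\boldsymbol{\varepsilon}/s,N(0,1)\bigr)\le C s^{-1/4}.
\]
Rescaling $N(0,s^2)$ to $N(0,2v)$ costs $C|1-s^2/(2v)|\le C\delta_{n,p}$ in Kolmogorov distance. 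I would split on $\{\delta_{n,p}\le1/2\}$: there $s^{-1/4}\le Cv^{-1/8}$, and off it the error is bounded by $1\le2\delta_{n,p}$. Averaging over $\mathbf{X}$ then gives \eqref{eq:comparison}. The $n^{-1}$ term absorbs $\Var(R^{\rm G})$ versus $2v$ and the Gaussian-side error, since $v^{-1/2}\le v^{-1/8}$.

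For fixed $p=k$ I would instead write $R^{\rm s}=n-\|\mathbf{U}^\top\boldsymbol{\varepsilon}\|^2$, where $\mathbf{U}$ is an orthonormal basis of the row space, so that $\|\mathbf{U}^\top\mathbf{e}_j\|^2=L_{jj}$. Applying \eqref{eq:sign-fixed} conditionally gives the comparison with $\chi_k^2$. The Gaussian $Q^{\rm G}$ equals $nB_k/(A_m+B_k)$ and is within $Cn^{-1/4}$ of $\chi^2_k$. Fixed $m=k$ is identical with $\mathbf{P}$ in place of $\mathbf{L}$, where $R^{\rm s}=\|\mathbf{U}^\top\boldsymbol{\varepsilon}\|^2$ directly. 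Note that $n/R-1$ is monotone, so the Kolmogorov distance passes through.

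For the equivalence \eqref{eq:all-dimension-geometry}, sufficiency follows by subsequences. If $v\to\infty$, use \eqref{eq:comparison}. If $v$ stays bounded, then $p$ or $m$ is bounded; along a further subsequence it is a fixed $k$. In that case $\mathbb{E}\delta_{n,p}\to0$ forces $\sum_j(L_{jj}-k/n)^2\to0$ in $L^1$, hence $\max_jL_{jj}\to0$ in probability, and \eqref{eq:fixed-dimension-bound} or \eqref{eq:fixed-nullity-bound} applies.

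Necessity is the hard step. If $\varepsilon_{n,p}\to0$ then $\mathbb{P}(T_n^2=0)\to0$, so $\eta_n\to0$, using that the Gaussian reference has no atom at $0$. Next, $\mathbb{E}\delta_{n,p}\to0$ must be extracted from the standardized second moment. Write $W=(R-m)/\sqrt{2v}$ on $\mathcal{F}$. By \eqref{eq:sign-fourth}, $\mathbb{E}W^4\le C$ uniformly, so $W^2$ is uniformly integrable. Convergence of $W$ in law to the same limit as $W^{\rm G}$ then gives $\mathbb{E}W^2\to\lim\mathbb{E}(W^{\rm G})^2$, with $\mathbb{E}W^2=\mathbb{E}(1-\delta_{n,p})$. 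This yields $\mathbb{E}\delta_{n,p}\to0$ whenever $v\to\infty$, with the standardized limit $N(0,1)$ having variance $1$. The obstacle is bounded $v$: there $W$ and $W^{\rm G}$ have the same non-Gaussian limit (for instance a centered $\chi_k^2$), and one must check that the conditional second moment still matches exactly. In that regime I would compute with fixed $k$: $\mathbb{E}W^2=\mathbb{E}(1-\delta)$ versus the Gaussian value $1-\mathbb{E}\delta^{\rm G}=1-2/(n+2)\to1$. This again forces $\mathbb{E}\delta_{n,p}\to0$, because uniform integrability holds by the same fourth-moment bound.
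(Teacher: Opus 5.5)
Your proposal is correct and follows essentially the same route as the paper. The shared steps are: sign symmetrization with the conditional bound \eqref{eq:sign-bound} and the split on $\{\delta_{n,p}\le 1/2\}$; the fixed-rank bound \eqref{eq:sign-fixed} together with the chi-square ratio when $p$ or $m$ is fixed; and, for necessity, the uniform fourth-moment bound \eqref{eq:sign-fourth}, which forces $\mathbb{E}[\mathbbm{1}_{\mathcal F}(1-\delta_{n,p})]$ to match the Gaussian value $n/(n+2)$. The one cosmetic difference is your split into bounded versus diverging $v$ in the necessity step, which is unnecessary: the moment-matching argument is uniform over all regimes (the paper runs it with the truncations $x^2\wedge K^2$ and replaces $R$ by $m$ off $\mathcal{F}$).
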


\subsection{Proof of Theorem \ref{thm:DAN}}
For a symmetric nondegenerate law, \eqref{eq:aspect-away} and Lemma~\ref{lem:rank} give $\eta_n\to0$. The equivalence \eqref{eq:all-dimension-geometry} in Proposition~\ref{thm:comparison} therefore makes $\varepsilon_{n,p}\to0$ equivalent to $\mathbb{E}\delta_{n,p}\to0$. It remains to show that this diagonal concentration condition $\mathbb{E}\delta_{n,p}\to0$ is equivalent to $\xi \in \mathrm{DAN}$.
\subsubsection{Sufficiency of DAN}
Let $\ell(a)=\mathbb{E}(\xi^2\mathbbm{1}_{\{|\xi|\le a\}})$. For a symmetric nondegenerate law, normal attraction is equivalent to slow variation of $\ell$; see \citet{Bingham1989}. Select $a_n\to\infty$ with $n\ell(a_n)/a_n^2\to1$. The associated tail criterion gives
\begin{equation}\label{eq:DAN-truncation}
n\mathbb{P}(|\xi|>a_n)\to0,
\qquad
\frac{\mathbb{E}(\xi^4\mathbbm{1}_{\{|\xi|\le a_n\}})}{n\ell(a_n)^2}\to0.
\end{equation}
The second assertion follows directly from slow variation. Indeed, for $0<\epsilon<1$,
\[
\frac{\mathbb{E}(\xi^4\mathbbm{1}_{\{|\xi|\le a\}})}{a^2\ell(a)}
\le\epsilon^2+\frac{\ell(a)-\ell(\epsilon a)}{\ell(a)}.
\]
First let $a\to\infty$ with $\epsilon$ fixed. The second term tends to zero by the slow variation of $\ell(a)$. Letting $\epsilon\rightarrow 0$ then gives the second assertion in \eqref{eq:DAN-truncation}.
Let $\mathbf{X}^\circ$ be the matrix from $\mathbf{X}$ with the entry law $\xi\mathbbm{1}_{\{|\xi|\le a_n\}}/\sqrt{\ell(a_n)}$. By symmetry and \eqref{eq:DAN-truncation}, the entries of $\mathbf{X}^\circ$ are centered, symmetric, and have variance one, with fourth moment $\mu_{4,n}$ satisfying $\mu_{4,n}/n\to0$. Denote its row-space and null-space projections by $\mathbf{L}^\circ$ and $\mathbf{P}^\circ=\mathbf{I}_n-\mathbf{L}^\circ$, respectively. These projections are also defined on rank failure. The usual inverse formula for $\mathbf{L}^\circ$ holds on $\mathcal{F}(\mathbf{X}^\circ)$.

Suppose first that $p/n\to0$ and let $\mathbf{C}_n=\mathbf{X}^\circ(\mathbf{X}^\circ)^\top/n$. Entrywise expansion gives
\begin{equation}
\frac1p\mathbb{E}\tr(\mathbf{C}_n-\mathbf{I}_p)^2
=\frac{\mu_{4,n}+p-2}{n}\to0.
\end{equation}
Consider the spectral decomposition of \(\mathbf{C}_n\) as
\[
\mathbf{C}_n=\mathbf{U}\boldsymbol{\Lambda} \mathbf{U}^\top
=\sum_{i=1}^p\lambda_i \mathbf{u}_i \mathbf{u}_i^\top ,
\]
where \(\lambda_1,\ldots,\lambda_p\) are the eigenvalues of \(\mathbf{C}_n\). For each strictly positive eigenvalue, define
\[
\mathbf{v}_i=\frac{(\mathbf{X}^\circ)^\top \mathbf{u}_i}{\sqrt{n\lambda_i}},\qquad \lambda_i>0.
\]
Then
\[
\mathbf{v}_i^\top \mathbf{v}_j
=\frac{\mathbf{u}_i^\top \mathbf{X}^\circ(\mathbf{X}^\circ)^\top \mathbf{u}_j}{n\sqrt{\lambda_i\lambda_j}}
=\frac{\mathbf{u}_i^\top \mathbf{C}_n\mathbf{u}_j}{\sqrt{\lambda_i\lambda_j}}
=\mathbbm{1}_{\{i=j\}}.
\]
Moreover, these vectors span the row space of $\mathbf{X}^\circ$, and hence we can represent
\[
\mathbf{L}^\circ=\sum_{\lambda_i>0}\mathbf{v}_i\mathbf{v}_i^\top=\sum_{\lambda_i\ge1/2}\mathbf{v}_i\mathbf{v}_i^\top+\sum_{0<\lambda_i<1/2}\mathbf{v}_i\mathbf{v}_i^\top=:\mathbf{L}_++\mathbf{L}_-.
\]
We first control the diagonal entries of \(\mathbf{L}_+\). For the \(j\)-th
diagonal element, we have
\[
(\mathbf{L}_+)_{jj}
=
\sum_{\lambda_i\ge1/2}
\frac{\big((\mathbf{X}^\circ)_{\cdot j}^{\top}\mathbf{u}_i\big)^2}
{n\lambda_i}\le
\frac2n
\sum_{\lambda_i\ge1/2}
\big((\mathbf{X}^\circ)_{\cdot j}^{\top}\mathbf{u}_i\big)^2\le 2\|\mathbf{x}_j^\circ\|^2/n,
\]
where we used \(\lambda_i\ge1/2\) and
\[
\sum_{i=1}^p
\big((\mathbf{X}^\circ)_{\cdot j}^{\top}\mathbf{u}_i\big)^2
=
\|(\mathbf{X}^\circ)_{\cdot j}\|^2
=\|\mathbf{x}_j^\circ\|^2
\]
by the orthonormality of \(\{\mathbf{u}_i\}_{i=1}^p\). Next, consider \(\mathbf{L}_-\). If \(0<\lambda_i<1/2\), then
\[
(1-\lambda_i)^2>\frac14 .
\]
Hence every eigenvalue contributing to \(\mathbf{L}_-\) contributes at least
\(1/4\) to $\operatorname{tr}(\mathbf{C}_n-\mathbf{I}_p)^2
=\sum_{i=1}^p(1-\lambda_i)^2$. Therefore,
\[
\frac14\operatorname{rank}(\mathbf{L}_-)
\le
\operatorname{tr}(\mathbf{C}_n-\mathbf{I}_p)^2.
\]
Since \(\mathbf{L}_-\) is an orthogonal projection onto the subspace spanned by the vectors \(\mathbf{v}_i\) with \(0<\lambda_i<1/2\), we have $\sum_{j=1}^n(\mathbf{L}_-)_{jj}^2\le\tr \mathbf{L}_-=\operatorname{rank}(\mathbf{L}_-)$, which implies
\begin{align}
\frac1p\mathbb{E}\sum_{j=1}^n(L^\circ_{jj})^2&\le
\frac2p\mathbb{E}\sum_{j=1}^n(\mathbf{L}_+)_{jj}^2
+
\frac2p\mathbb{E}\sum_{j=1}^n(\mathbf{L}_-)_{jj}^2 \nonumber\\
&\le\frac8{n^2p}\sum_{j=1}^n\mathbb{E}\|\mathbf{x}_j^\circ\|^4
+\frac8p\mathbb{E}\tr(\mathbf{C}_n-\mathbf{I}_p)^2\\
&\le C\frac{\mu_{4,n}+p}{n}\to0,\notag
\end{align}
where the last line follows from $\mathbb{E}\|\mathbf{x}_j^\circ\|^4=p\mu_{4,n}+p(p-1)$.

For a subsequence with $\gamma_n=p/n\to\gamma\in(0,1)$, fix $t>0$ and set
\[
a_n(t)=n^{-1}\tr(\mathbf{C}_n+t\mathbf{I}_p)^{-1},\qquad
\mathbf{K}_n(t)=\mathbf{I}_n-n^{-1}(\mathbf{X}^\circ)^\top(\mathbf{C}_n+t\mathbf{I}_p)^{-1}\mathbf{X}^\circ.
\]
Delete the $j$-th column $\mathbf{x}_j^\circ$ and let
\[
\mathbf{M}_j=(\mathbf{C}_n-n^{-1}\mathbf{x}_j^\circ(\mathbf{x}_j^\circ)^\top+t\mathbf{I}_p)^{-1},
\qquad q_j=n^{-1}(\mathbf{x}_j^\circ)^\top \mathbf{M}_j\mathbf{x}_j^\circ.
\]
By the Sherman--Morrison formula, and using $\mathbf{M}_j^2\preceq t^{-1}\mathbf{M}_j$, we obtain
$(\mathbf{K}_n(t))_{jj}=(1+q_j)^{-1}$ and
\[
0\le n^{-1}\tr \mathbf{M}_j-a_n(t)
=\frac{(\mathbf{x}_j^\circ)^\top \mathbf{M}_j^2\mathbf{x}_j^\circ}{n^2(1+q_j)}\le \frac{(\mathbf{x}_j^\circ)^\top \mathbf{M}_j\mathbf{x}_j^\circ}
{tn^2(1+q_j)}=
\frac{q_j}{nt(1+q_j)}
\le\frac1{nt}.
\]
Applying Lemma~\ref{lem:quadratic-LLN}, with $\mu_{4,n}/n\to0$, gives
\[
n^{-1}\sum_{j=1}^n\mathbb{E}|q_j-a_n(t)|\to0.
\]
Since the function $f(u)=(1+u)^{-1}$ is Lipschitz on $[0,\infty)$, we get
\begin{equation}\label{eq:k1ant}
\frac1n\sum_{j=1}^n|(\mathbf{K}_n(t))_{jj}-(1+a_n(t))^{-1}|\pto0.
\end{equation}
The trace identity $n^{-1}\tr \mathbf{K}_n(t)=1-\gamma_n+ta_n(t)$ therefore implies
\begin{equation}
1-\gamma_n+ta_n(t)-(1+a_n(t))^{-1}\pto0.
\end{equation}
To identify the limit, let $a_n^*(t)$ be the unique nonnegative zero of
\[
F_n(a)=1-\gamma_n+ta-(1+a)^{-1}.
\]
Indeed, $F_n(0)=-\gamma_n<0$, $F_n(a)\to\infty$ as $a\to\infty$, and $F_n'(a)=t+(1+a)^{-2}\ge t$. Consequently,
\[
|a_n(t)-a_n^*(t)|\le t^{-1}|F_n(a_n(t))|\pto0.
\]
Writing $k_n^*(t)=(1+a_n^*(t))^{-1}$ in $F_n(a_n^*(t))=0$ gives
\[
(k_n^*(t))^2-(1-\gamma_n-t)k_n^*(t)-t=0.
\]
Its positive root converges to the corresponding root with $\gamma_n$ replaced by $\gamma$. Therefore,
\begin{equation}\label{eq:ridge-root}
(1+a_n(t))^{-1}\pto
k_\gamma(t):=\frac{1-\gamma-t+\sqrt{(1-\gamma-t)^2+4t}}2.
\end{equation}
Combining \eqref{eq:k1ant} with \eqref{eq:ridge-root} gives
\[
\frac1n\sum_{j=1}^n\big((\mathbf{K}_n(t))_{jj}-k_\gamma(t)\big)^2\pto0.
\]
Indeed, all the diagonal entries and $k_\gamma(t)$ lie in $[0,1]$, so the squared differences are bounded by the absolute differences. The same convergence holds in expectation by boundedness.

Notice that singular-value decomposition gives
$\mathbf{0}\preceq\mathbf{P}^\circ\preceq\mathbf{K}_n(t)\preceq\mathbf{I}_n$.
Let $\mathcal{F}^\circ=\{\operatorname{rank}\mathbf{X}^\circ=p\}$. The maximal atom of the truncated law is at most the original maximal atom plus $\mathbb{P}(|\xi|>a_n)$, and hence is bounded away from one for all sufficiently large $n$. Applying Lemma~\ref{lem:rank} gives $\mathbb{P}((\mathcal{F}^\circ)^c)\le Ce^{-cn}$. Consequently $n^{-1}\mathbb{E}\tr\mathbf{P}^\circ\to1-\gamma$, whereas the trace identity gives $n^{-1}\mathbb{E}\tr\mathbf{K}_n(t)\to k_\gamma(t)$. Since the eigenvalues of $\mathbf{K}_n(t)-\mathbf{P}^\circ$ belong to $[0,1]$, we get
\[
\frac1n\sum_{j=1}^n\big((\mathbf{K}_n(t))_{jj}-P^\circ_{jj}\big)^2
\le\frac1n\tr\big(\mathbf{K}_n(t)-\mathbf{P}^\circ\big).
\]
Adding and subtracting $(\mathbf{K}_n(t))_{jj}$ and $k_\gamma(t)$, and using $(a+b+c)^2\le3(a^2+b^2+c^2)$, now yields
\begin{align*}
\limsup_{n\to\infty}\frac1n\mathbb{E}
\sum_{j=1}^n\big(P^\circ_{jj}-(1-\gamma_n)\big)^2
\le3\big(k_\gamma(t)-(1-\gamma)\big)+3\big(k_\gamma(t)-(1-\gamma)\big)^2\rightarrow 0,
\end{align*}
since $k_\gamma(t)\to1-\gamma$ as $t\rightarrow 0$.

We finally transfer these conclusions for $\mathbf{P}^{\circ}$ to the original projection $\mathbf{P}$. Let $N_n$ denote the number of rows containing at least one entry larger than $a_n$ in absolute value. From \eqref{eq:DAN-truncation}, we get
\[
\mathbb{E} N_n\le pn\mathbb{P}(|\xi|>a_n)=o(p).
\]
Since row rescaling does not change a row space, it suffices to compare $\mathbf{X}/\sqrt{\ell(a_n)}$ with $\mathbf{X}^\circ$. Let $\mathcal{S}$ be the span of these common rows. Each row space is obtained from $\mathcal{S}$ by adjoining at most $N_n$ rows, so their projections agree on $\mathcal{S}$ and their difference has rank at most $2N_n$. Since the difference of two orthogonal projections has operator norm at most one, we have
\begin{equation}
\sum_{j=1}^n(L_{jj}-L^\circ_{jj})^2
\le\|\mathbf{L}-\mathbf{L}^\circ\|_F^2\le2N_n.
\end{equation}
For $p/n\to0$, applying $(a+b)^2\le2a^2+2b^2$ gives
\[
\frac1p\mathbb{E}\sum_{j=1}^nL_{jj}^2
\le\frac2p\mathbb{E}\sum_{j=1}^n(L^\circ_{jj})^2
+\frac4p\mathbb{E}N_n\to0.
\]
On $\mathcal{F}$, the numerator of $\delta_{n,p}$ equals $\sum_{j=1}^nL_{jj}^2-p^2/n$, which further implies $\mathbb{E}\delta_{n,p}\to0$. When $p/n\to\gamma\in(0,1)$, using $P_{jj}-P^\circ_{jj}=-(L_{jj}-L^\circ_{jj})$ gives
\[
\frac1n\mathbb{E}\sum_{j=1}^n\big(P_{jj}-(1-\gamma_n)\big)^2
\le\frac4n\mathbb{E}N_n
+\frac2n\mathbb{E}\sum_{j=1}^n\big(P^\circ_{jj}-(1-\gamma_n)\big)^2
\to0.
\]
Since $mp/n\asymp n$ in this case, the same conclusion follows for $\mathbb{E}\delta_{n,p}$. Finally, every subsequence satisfying \eqref{eq:aspect-away} has a further subsequence with $p/n\to\gamma\in[0,1)$, which completes the proof of sufficiency.
\subsubsection{Necessity of DAN}
Let $W=\xi^2$, $U(t)=\mathbb{E}(W\mathbbm{1}_{\{W\le t\}})$, and $q(t)=\mathbb{P}(W>t)$. The normal-attraction tail criterion is
\begin{equation}\label{eq:DAN-tail-criterion}
tq(t)/U(t)\rightarrow 0;
\end{equation}
see \citet{Bingham1989} for details.
Suppose \eqref{eq:DAN-tail-criterion} fails. There exist $c>0$ and $t_k\to\infty$ with $t_kq(t_k)\ge cU(t_k)$. Fix $\gamma_0<1$ with $p/n\le\gamma_0$. Choose $\epsilon>0$ such that $h=(1+\epsilon)^{-1}>\gamma_0$, and then choose $b>0$ so small that $b(1+1/(c\epsilon))<1/2$. Set $n_k=\lfloor b/q(t_k)\rfloor$, so $n_kq(t_k)\to b$. At $n=n_k$,
\begin{align*}
\mathbb{P}\left(\sum_{j=2}^nW_j>\epsilon t_k\right)
\le(n-1)q(t_k)+\frac{(n-1)U(t_k)}{\epsilon t_k}\le b\left(1+\frac1{c\epsilon}\right)<1/2,
\end{align*}
which further implies
\[
\mathbb{P}\left(W_1>t_k,\ \sum_{j=2}^nW_j\le\epsilon t_k\right)
\ge q(t_k)/2.
\]
Define
\[
\mathcal{A}=\{W_1>t_k,\ \sum_{j=2}^nW_j\le\epsilon t_k\}.
\]
On $\mathcal{A}$, the corresponding normalized row $\mathbf{y}$ satisfies $y_1^2\ge h$. Since that row belongs to the row space, $\mathbf{L}\succeq\mathbf{y}\mathbf{y}^\top$ and $L_{11}\ge h$. For $p$ independent rows, we get
\[
\mathbb{P}(L_{11}\ge h)\ge1-(1-q(t_k)/2)^p\ge c_b p q(t_k).
\]
The last inequality is uniform because $pq(t_k)$ is bounded. More explicitly, $1-(1-q/2)^p\ge1-e^{-pq/2}$, and $1-e^{-u}\ge c_bu$ on the bounded interval containing $pq(t_k)/2$. Exchangeability of columns then gives
\[
\frac1p\mathbb{E}\sum_{j=1}^n(L_{jj}-p/n)^2
=\frac np\mathbb{E}(L_{11}-p/n)^2
\ge c_b(h-\gamma_0)^2nq(t_k).
\]
Since $n=n_k$ and $n_kq(t_k)\to b$, the limit inferior of the right-hand side is strictly positive. Restricting the left-hand side to the original full-row-rank event $\mathcal{F}$ changes it by at most $(n/p)\mathbb{P}(\mathcal{F}^c)\le(n/p)\eta_n=o(1)$. On $\mathcal{F}$, one has
\[
\frac1p\sum_{j=1}^n(L_{jj}-p/n)^2=(1-p/n)\delta_{n,p}.
\]
Consequently $\mathbb{E}\delta_{n,p}$ cannot tend to zero, completing the proof of necessity.

\subsection{Proof of Corollary~\ref{thm:all-dimensions}}\label{sec:proof-near-square}

In view of Theorem~\ref{thm:DAN}, it suffices to consider the case \(p/n\to1\) with \(p<n\). Without loss of generality, assume that the entry law has variance one. We first show that
\begin{equation}\label{eq:near-square-maxdiag}
\max_{1\le j\le n}P_{jj}\pto0,
\qquad
\mathbb{P}(\operatorname{rank}\mathbf{X}=p)\rightarrow 1.
\end{equation}
For fixed \(t>0\), let
\[
\mathbf{C}_n=\mathbf{X}\mathbf{X}^\top/n,~~
a_n(t)=n^{-1}\tr(\mathbf{C}_n+t\mathbf{I}_p)^{-1},
~~
\mathbf{K}_n(t)=\mathbf{I}_n-n^{-1}\mathbf{X}^\top
(\mathbf{C}_n+t\mathbf{I}_p)^{-1}\mathbf{X}.
\]
For the \(j\)-th column \(\mathbf{x}_j\), set
\[
\mathbf{M}_j=
\big(\mathbf{C}_n-n^{-1}\mathbf{x}_j\mathbf{x}_j^\top+t\mathbf{I}_p\big)^{-1},
~~
q_j=n^{-1}\mathbf{x}_j^\top\mathbf{M}_j\mathbf{x}_j.
\]
Then \(\mathbf{M}_j\) is independent of \(\mathbf{x}_j\), and the Sherman--Morrison formula gives
\[
(\mathbf{K}_n(t))_{jj}=(1+q_j)^{-1}.
\]
Moreover, using \(\mathbf{M}_j^2\preceq t^{-1}\mathbf{M}_j\),
\[
0\le n^{-1}\tr\mathbf{M}_j-a_n(t)
\le \frac{q_j}{nt(1+q_j)}
\le\frac1{nt}.
\]
Applying Lemma~\ref{lem:quadratic-LLN} conditionally on \(\mathbf{M}_j\) therefore gives
\[
\frac1n\sum_{j=1}^n\mathbb{E}|q_j-a_n(t)|\to0,
\]
which further implies
\begin{equation}\label{eq:near-square-k-diagonal}
\frac1n\sum_{j=1}^n
\left|(\mathbf{K}_n(t))_{jj}-(1+a_n(t))^{-1}\right|
\pto0.
\end{equation}
On the other hand,
\[
n^{-1}\tr\mathbf{K}_n(t)=1-p/n+ta_n(t),
\]
so \eqref{eq:near-square-k-diagonal} yields
\[
1-p/n+ta_n(t)-(1+a_n(t))^{-1}\pto0.
\]
The left-hand side is strictly increasing as a function of \(a_n(t)\). Since \(p/n\to1\), its zero converges to
\begin{equation}
a(t)=\frac{\sqrt{1+4/t}-1}{2},
\qquad
a_n(t)\pto a(t).
\end{equation}
Notice that \(a(t)\to\infty\) as \(t\rightarrow 0\).
Since \(\|\mathbf{M}_j\|\le1/t\) and \(\tr\mathbf{M}_j\ge na_n(t)\), conditioning on all columns except \(j\), applying Lemma~\ref{lem:positive-form} to \(\mathbf{M}_j\), and then taking a union bound give
\begin{align*}
&\mathbb{P}\left(a_n(t)\ge a(t)/2,
\min_{1\le j\le n}n^{-1}\mathbf{x}_j^\top\mathbf{M}_j\mathbf{x}_j
<a_0a(t)/2\right)\\
&\qquad\le
\sum_{j=1}^n
\mathbb{P}\left(\tr\mathbf{M}_j\ge na(t)/2,
\mathbf{x}_j^\top\mathbf{M}_j\mathbf{x}_j<a_0\tr\mathbf{M}_j\right)
\le Cn e^{-cnta(t)/2}\to0.
\end{align*}
Consequently,
\[
\mathbb{P}\left(
\max_{1\le j\le n}(\mathbf{K}_n(t))_{jj}
>
\frac1{1+a_0a(t)/2}
\right)\to0.
\]
A spectral decomposition gives \(\mathbf{0}\preceq\mathbf{P}\preceq\mathbf{K}_n(t)\). Given \(\epsilon>0\), choosing \(t>0\) sufficiently small so that \((1+a_0a(t)/2)^{-1}<\epsilon\), and then letting \(n\to\infty\), gives
\[
\max_{1\le j\le n}P_{jj}\pto0.
\]

It remains to verify full row rank when the nullity remains bounded. If \(m=n-p\to\infty\), Lemma~\ref{lem:rank} gives \(\mathbb{P}(\mathcal{F}^c)\to0\). Thus it remains to consider \(m=k\) fixed. Let \(q<1\) be the maximal atom of the entry law and fix an integer \(K>k\). If \(\mathbf{B}_i\) denotes the matrix formed by the first \(i\) rows of \(\mathbf{X}\), exposing the first \(n-K\) rows gives
\[
\mathbb{P}\big(\operatorname{rank}\mathbf{B}_{n-K}<n-K\big)
\le \frac{q^{K+1}}{1-q}.
\]
For \(n-K\le i<n-k\), choose an \(\mathcal{F}_i\)-measurable unit vector \(\mathbf{v}_i\in\ker\mathbf{B}_i\), where \(\mathcal{F}_i=\sigma(\mathbf{B}_i)\). Since \(\mathbf{v}_i=\mathbf{P}_i\mathbf{v}_i\), the Cauchy--Schwarz inequality gives
\[
\max_{1\le j\le n}|(\mathbf{v}_i)_j|^2
\le
\max_{1\le j\le n}(\mathbf{P}_i)_{jj}\pto0,
\]
where the maximum-diagonal estimate above is applied to the matrix \(\mathbf{B}_i\).
The next row \(\mathbf{z}_{i+1}\) is independent of \(\mathcal{F}_i\). Conditional on \(\mathcal{F}_i\), for every \(\epsilon>0\),
\[
\sum_{j=1}^n(\mathbf{v}_i)_j^2
\mathbb{E}_i\left[
\xi^2\mathbbm{1}_{\{|(\mathbf{v}_i)_j\xi|>\epsilon\}}
\right]
\le
\mathbb{E}\left[
\xi^2\mathbbm{1}_{\{|\xi|>\epsilon/\|\mathbf{v}_i\|_\infty\}}
\right]\pto0
\]
by finite variance. Hence the conditional Lindeberg central limit theorem gives
\[
\mathbf{v}_i^\top\mathbf{z}_{i+1}\dto N(0,1).
\]
In particular, for every \(\epsilon>0\),
\[
\limsup_{n\to\infty}
\mathbb{P}(\mathbf{v}_i^\top\mathbf{z}_{i+1}=0)
\le
\mathbb{P}(|N(0,1)|\le\epsilon),
\]
and letting \(\epsilon\rightarrow 0\) shows that this probability tends to zero. Since the event that the new row belongs to the previous row space is contained in \(\{\mathbf{v}_i^\top\mathbf{z}_{i+1}=0\}\), and there are only \(K-k\) remaining steps, we get
\[
\limsup_{n\to\infty}\mathbb{P}(\operatorname{rank}\mathbf{X}<p)
\le\frac{q^{K+1}}{1-q}.
\]
Letting \(K\to\infty\) gives the second assertion in \eqref{eq:near-square-maxdiag}. Together with the case \(m\to\infty\), a subsequence argument covers every sequence with \(p/n\to1\).

It remains to control the augmented-full-rank event \(\mathcal{G}\). If \(m\to\infty\), Lemma~\ref{lem:rank} gives directly
\[
\eta_n=\mathbb{P}(\mathcal{G}^c)
\le\frac{q^m}{1-q}\to0.
\]
Suppose now that \(m=k\) is fixed. Under symmetry, applying Lemma~\ref{lem:sign} to an orthonormal basis of \(\ker\mathbf{X}\) on \(\mathcal{F}\) gives
\[
d_K(R,\chi_k^2)
\le
C_k\mathbb{E}\left[
\mathbbm{1}_{\mathcal{F}}
\left(\max_{1\le j\le n}P_{jj}\right)^{1/8}
\right]
+\mathbb{P}(\mathcal{F}^c)\to0.
\]
Thus \(R\dto\chi_k^2\). Since \(R\ge0\), for every \(\epsilon>0\),
\[
\limsup_{n\to\infty}\mathbb{P}(R=0)
\le
\limsup_{n\to\infty}\mathbb{P}(R\le\epsilon)
\le
\mathbb{P}(\chi_k^2\le\epsilon),
\]
and letting \(\epsilon\rightarrow 0\) yields \(\mathbb{P}(R=0)\to0\). On \(\mathcal{F}\), the condition \(R>0\) is equivalent to \(\mathcal{G}\), so
\[
\eta_n
\le
\mathbb{P}(\mathcal{F}^c)+\mathbb{P}(R=0)
\to0.
\]
Hence \(\eta_n\to0\) for every sequence with $p/n\rightarrow 1$.

Finally, on \(\mathcal{F}\),
\[
0\le\delta_{n,p}
=
\frac{\sum_{j=1}^nP_{jj}^2-m^2/n}{mp/n}
\le
\frac np\max_{1\le j\le n}P_{jj}.
\]
Since \(n/p\to1\), \(\max_{1\le j\le n}P_{jj}\pto0\), and \(0\le\delta_{n,p}\le1\), bounded convergence gives \(\mathbb{E}\delta_{n,p}\to0\). Together with \(\eta_n\to0\), Proposition~\ref{thm:comparison} yields $\varepsilon_{n,p}\to0$, which completes the proof.

\subsection{Proof of Corollary~\ref{cor:normal}}\label{sec:proof-normal}

For simplicity, let
\[
Z_n=\frac{R-m}{\sqrt{2mp/n}},\qquad
Y_n=\sqrt{\frac{m^3}{2pn}}\left(\frac{T_n^2}{n}-\frac pm\right),\qquad
L_n=\sqrt{\frac{nm}{2p}}\big(\log(1+T_n^2/n)+\log(m/n)\big).
\]
We first record the relations among these three statistics. Suppose that \(p,m\to\infty\) and \(\eta_n\to0\). If \(Z_n\) is tight, writing
\[
h_n=\frac{R-m}{m}
=\sqrt{\frac{2p}{nm}}Z_n=o_{\mathbb P}(1),
\]
the identity \(1+T_n^2/n=n/R\) gives, on \(\mathcal G\),
\[
Y_n
=\sqrt{\frac{nm}{2p}}\left(\frac1{1+h_n}-1\right)
=-Z_n+o_{\mathbb P}(1),
\]
where we used $|\frac1{1+h}-1+h|\le 2h^2$ for $|h|\le1/2$. Similarly, applying \(|\log(1+h)-h|\le Ch^2\) gives
\[
L_n=-Z_n+o_{\mathbb P}(1).
\]
Consequently,
\begin{align}
Y_n&=-Z_n+o_{\mathbb P}(1),\label{eq:inverse-transform-CLT}\\
L_n&=-Z_n+o_{\mathbb P}(1).\label{eq:log-transform-CLT}
\end{align}
The converse implications follow from the same transformations. Indeed, if \(Y_n\) is tight, set
\[
u_n=\frac{T_n^2/n-p/m}{1+p/m}.
\]
Then \(u_n=o_{\mathbb P}(1)\) and, on \(\mathcal G\), \(R=m/(1+u_n)\). Hence
\[
Z_n
=\sqrt{\frac{nm}{2p}}\left(\frac1{1+u_n}-1\right)
=-Y_n+o_{\mathbb P}(1).
\]
If \(L_n\) is tight, writing \(a_n=\sqrt{nm/(2p)}\to\infty\), the identity $R=m\exp(-L_n/a_n)$ gives
\[
Z_n=a_n\big(\exp(-L_n/a_n)-1\big)
=-L_n+o_{\mathbb P}(1).
\]
Thus tightness of any one of \(Z_n,Y_n,L_n\) implies tightness of the other two.

We now proceed to complete the proof of Corollary \ref{cor:normal}. Under the applicable symmetric result in Theorem \ref{thm:DAN} or Corollary \ref{thm:all-dimensions}, the \(F\) approximation holds. Since the reference distribution is strictly positive and \(T_n^2=0\) on \(\mathcal G^c\), it also follows that \(\eta_n\to0\). Using the monotone transformation in \eqref{eq:identity}, we obtain
\[
d_K(R,R^{\rm G})\to0.
\]
If \(p,m\to\infty\), then \(v=mp/n\to\infty\). For the Gaussian projection, the sign comparison together with \(\mathbb E\delta_{n,p}^{\rm G}=2/(n+2)\) gives
\[
\frac{R^{\rm G}-m}{\sqrt{2v}}\dto N(0,1).
\]
Therefore,
\[
\frac{R-m}{\sqrt{2mp/n}}\dto N(0,1),
\]
which implies \eqref{eq:R-CLT}. Applying \eqref{eq:inverse-transform-CLT} and \eqref{eq:log-transform-CLT} then gives \eqref{eq:T-CLT} and \eqref{eq:all-dimension-log}.

Under symmetry and \eqref{eq:aspect-away}, the converse follows from the same projection representation. If \eqref{eq:R-CLT} holds, the uniformly bounded fourth moments of the rank-corrected Rademacher quadratic form used in the proof of Proposition~\ref{thm:comparison} give uniform integrability. Hence the limiting variance is one, which implies
\[
\mathbb E\delta_{n,p}\to0.
\]
Theorem~\ref{thm:DAN} then yields \(\xi\in\DAN\). If \eqref{eq:T-CLT} holds, the reverse transformation above first gives \eqref{eq:R-CLT}, and the same argument applies.

Finally, if \(p=k\) is fixed, the Gaussian row-space statistic satisfies $n-R^{\rm G}\dto\chi_k^2$. The \(F\) approximation therefore gives \(Q=n-R\dto\chi_k^2\). Since
\[
T_n^2-Q=\frac{Q^2}{n-Q}=o_{\mathbb P}(1)
\]
on \(\mathcal G\), we obtain $T_n^2\dto\chi_k^2$.
If \(m=k\) is fixed, the null-space comparison gives $R\dto\chi_k^2$. Using the identity
\[
\frac{T_n^2}{n^2}=\frac1R-\frac1n
\]
on \(\mathcal G\), together with the fact that \(\chi_k^2\) has no atom at zero and $\eta_n\rightarrow 0$, the continuous mapping theorem yields
\[
\frac{T_n^2}{n^2}\dto\frac1{\chi_k^2}.
\]
This completes the proof.

\section{Proofs of the nonsymmetric case}\label{sec:nonsymmetry}

This section contains the proof of Theorem~\ref{thm:weak-five-halves}.
Without symmetry, the reduction to a Rademacher quadratic form does not hold. However, the identity $1+T_n^2/n=n/R$ still holds on $\mathcal G$, which suggests that the asymptotic behavior of $T_n^2/n$ relies on the quadratic form $R=\mathbf{1}^{\top}\mathbf{P}\mathbf{1}$. Inspired by \cite{LiLiuXieZhou2026}, the projection matrices satisfy a recursion formula due to their intrinsic structure. Specifically, for a matrix $\mathbf{B}$, let
$\mathbf{J}=\mathbf{P}(\mathbf{B})=\mathbf{I}_n-\mathbf{B}^\top(\mathbf{B}\mathbf{B}^\top)^{-1}\mathbf{B}$ and $\mathbf{t}=\mathbf{J}\mathbf{1}$.
Adding an independent row $\mathbf{z}^{\top}$ forms a new matrix $(\mathbf{B};\mathbf{z}^{\top})$, which gives the exact update recursion
\begin{equation}\label{eq:five-halves-projection-recursion}
\mathbf{P}(\mathbf{B};\mathbf{z}^{\top})
=\mathbf{J}-\frac{\mathbf{J}\mathbf{z}\mathbf{z}^{\top}\mathbf{J}}
{\mathbf{z}^{\top}\mathbf{J}\mathbf{z}},
\qquad
R(\mathbf{B};\mathbf{z}^{\top})
=R(\mathbf{B})-
\frac{(\mathbf{t}^{\top}\mathbf{z})^2}
{\mathbf{z}^{\top}\mathbf{J}\mathbf{z}}.
\end{equation}
For completeness, the quotient above is set to zero if the denominator $\mathbf{z}^{\top}\mathbf{J}\mathbf{z}$ vanishes, which has exponentially small probability by Lemma \ref{lem:lower}. 
The exact update recursion in \eqref{eq:five-halves-projection-recursion} is a key ingredient in the proof. It controls both the error due to truncation and standardization and the error arising from the Lindeberg replacement in Section~\ref{sec:lindeberg}.
Specifically, we compare
its row decrements with those obtained from the entrywise sign-symmetrized
law, whose projection statistic satisfies the symmetric reference CLT established in Theorem \ref{thm:DAN}. Their diagonal terms agree exactly, whereas the signed off-diagonal terms are negligible under \eqref{eq:weak-five-halves}.
The comparison is first proved for a non-atomic triangular array $\zeta_n$ and then transferred to the original fixed law $\xi$ under \eqref{eq:weak-five-halves}.

\subsection{Truncation is negligible}
We begin with some estimates for truncated, smoothed, and standardized random variables, which are used frequently in the sequel. The smoothing argument is used to make the law non-atomic, which does not affect the asymptotic behavior of $R$ at the CLT scale by choosing an appropriate smoothing level. The proof is deferred to Appendix \ref{app:prf_truncation}.
\begin{lemma}\label{lem:five-halves-truncation}
Suppose $\mathbb{E}\xi=0$, $\mathbb{E}\xi^2=1$, and
\eqref{eq:weak-five-halves} holds. Let $U$ be independent of $\xi$
and uniform on $[-1,1]$, and define
\begin{gather}
 \tau_n=n^{2/3},\qquad \epsilon_n=n^{-3},\qquad
 \widehat\xi_n=\xi\mathbbm{1}_{\{|\xi|\le\tau_n\}},\qquad
 q_n=\mathbb{P}(|\xi|>\tau_n),\qquad
 \mu_n=\mathbb{E}\widehat\xi_n,
 \label{lt:eq:fixed-cutoff}\\
 \widetilde\xi_n=\widehat\xi_n+\epsilon_nU,\qquad
 s_n^2=\Var(\widehat\xi_n)+\epsilon_n^2/3,\qquad
 \zeta_n=(\widetilde\xi_n-\mu_n)/s_n.
 \label{lt:eq:fixed-array-law}
\end{gather}
Then $s_n\to1$, $\zeta_n$ is non-atomic with mean zero and variance one, and
\begin{gather}
 q_n=o(n^{-5/3}),\qquad |\mu_n|=o(n^{-1}),\qquad
 \mathbb{E}|\widehat\xi_n|^3=o(n^{1/3}),\qquad
 \mathbb{E}\widehat\xi_n^4=o(n),
 \label{lt:eq:fixed-cutoff-moments}\\
 l_n:=\mathbb{E}|\zeta_n|^3=o(n^{1/3}),\qquad
 k_n:=\mathbb{E}\zeta_n^4=o(n).
 \label{eq:five-halves-truncated-moments}
\end{gather}
For some fixed $n_0$, we have
\begin{equation}
 \lim_{x\to\infty}\sup_{n\ge n_0}
 x^{5/2}\mathbb{P}(|\zeta_n|>x)=0.
 \label{lt:eq:fixed-uniform-tail}
\end{equation}
Moreover, there is a bounded
nondecreasing function $\omega:(0,1]\to[0,\infty)$ with
$\omega(t)\to0$ as $t\rightarrow 0$ such that
\begin{equation}\label{sr:eq:signed-laplace-bound}
\left|\mathbb{E}\{\zeta_n e^{-s\zeta_n^2}\}\right|
\le\omega(t)s^{3/4},\qquad 0<s\le t\le1,
\end{equation}
uniformly in $n$.
\end{lemma}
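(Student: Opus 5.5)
The plan is to derive everything from the single tail function $\bar F(x)=\mathbb{P}(|\xi|>x)=o(x^{-5/2})$, through layer-cake integrals. Fix $\theta(x):=\sup_{y\ge x}y^{5/2}\bar F(y)\downarrow0$.

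\emph{Step 1: elementary moments.} First, $q_n=\bar F(n^{2/3})\le\theta(n^{2/3})n^{-5/3}=o(n^{-5/3})$. Since $\mathbb{E}\xi=0$, we have $|\mu_n|=|\mathbb{E}\xi\mathbbm{1}_{\{|\xi|>\tau_n\}}|\le\tau_nq_n+\int_{\tau_n}^\infty\bar F(x)\,\mathrm{d}x\le C\theta(\tau_n)\tau_n^{-3/2}=o(n^{-1})$. For the higher moments,
\[
\mathbb{E}|\widehat\xi_n|^r\le r\int_0^{\tau_n}x^{r-1}\bar F(x)\,\mathrm{d}x .
\]
We split this integral at a slowly growing level $x_0$ and bound $\bar F(x)\le\theta(x_0)x^{-5/2}$ on $[x_0,\tau_n]$. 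This gives $o(\tau_n^{r-5/2})+C(x_0)$. For $r=3$ the bound is $o(n^{1/3})$, and for $r=4$ it is $o(n)$. For the variance, $\Var(\widehat\xi_n)=1-\mathbb{E}\xi^2\mathbbm{1}_{\{|\xi|>\tau_n\}}-\mu_n^2\to1$, so $s_n\to1$. Moreover, $\zeta_n$ has mean zero and variance one by construction, and it is non-atomic because it is a convolution with the uniform law. The bounds on $l_n$ and $k_n$ then follow from Minkowski's inequality, since $\epsilon_n$, $\mu_n$ and $s_n-1$ are all negligible.

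\emph{Step 2: uniform tail bound \eqref{lt:eq:fixed-uniform-tail}.} We have $|\zeta_n|\le(|\xi|+1+|\mu_n|)/s_n\le2|\xi|+2$ for $n\ge n_0$. Hence $\mathbb{P}(|\zeta_n|>x)\le\bar F(x/2-1)$, and this is $o(x^{-5/2})$ uniformly in $n$.

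\emph{Step 3: signed Laplace bound \eqref{sr:eq:signed-laplace-bound}. This is the main obstacle.} Set $h_s(y)=ye^{-sy^2}$, which satisfies $|h_s|\le Cs^{-1/2}$ and $|h_s'|\le1$. Since $\mathbb{E}\zeta_n=0$,
\[
\mathbb{E}h_s(\zeta_n)=-\mathbb{E}\,\zeta_n(1-e^{-s\zeta_n^2}).
\]
We split at a level $M$. On $\{|\zeta_n|\le M\}$ we use $|1-e^{-sy^2}|\le sy^2$, which bounds this part by $sM\cdot\mathbb{E}\zeta_n^2\le sM$. On $\{|\zeta_n|>M\}$ we use $|y(1-e^{-sy^2})|\le\min(|y|,s|y|^3)$ and apply the layer-cake formula with the uniform tail $\mathbb{P}(|\zeta_n|>x)\le\vartheta(x)x^{-5/2}$, where $\vartheta(x):=\sup_{n\ge n_0}\sup_{y\ge x}y^{5/2}\mathbb{P}(|\zeta_n|>y)\to0$. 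This gives
\[
\mathbb{E}|\zeta_n|\min(1,s\zeta_n^2)\mathbbm{1}_{\{|\zeta_n|>M\}}\le C\vartheta(M)\Big(s\int_M^{s^{-1/2}}x^{2}x^{-5/2}\,\mathrm{d}x+\int_{s^{-1/2}}^\infty x^{-5/2}\,\mathrm{d}x\Big)+\text{boundary terms}\le C\vartheta(M)s^{3/4}.
\]
We then choose $M=s^{-1/4}\ge t^{-1/4}$. The first part becomes $sM=s^{3/4}$ and the second becomes $C\vartheta(t^{-1/4})s^{3/4}$.

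The first part, $s^{3/4}$, does not carry a vanishing factor, so the bound on $\{|\zeta_n|\le M\}$ has to be refined. The key observation is that the exact cancellation uses only the odd part. Write
\[
\mathbb{E}\zeta_n(1-e^{-s\zeta_n^2})\mathbbm{1}_{\{|\zeta_n|\le M\}}=\mathbb{E}\zeta_n(1-e^{-s\zeta_n^2})\mathbbm{1}_{\{|\zeta_n|\le A\}}+\mathbb{E}\zeta_n(1-e^{-s\zeta_n^2})\mathbbm{1}_{\{A<|\zeta_n|\le M\}}.
\]
The first piece is at most $sA^3\cdot\mathbb{P}\le sA^3$ with $A$ fixed, and $sA^3\le t^{1/4}A^3s^{3/4}$. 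The second piece is at most $s\,\mathbb{E}|\zeta_n|^3\mathbbm{1}_{\{A<|\zeta_n|\le M\}}\le Cs\,\vartheta(A)M^{1/2}=C\vartheta(A)s^{3/4}$ by the layer-cake formula. Altogether this gives $\omega(t)=C\inf_{A}\big(t^{1/4}A^3+\vartheta(A)\big)+C\vartheta(t^{-1/4})$. This function is bounded and nondecreasing, it tends to $0$ as $t\to0$ (take $A=t^{-1/24}$), and it is uniform in $n$ because every bound depends on $n$ only through the uniform tail of Step 2.

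The delicate point is to verify that the layer-cake boundary terms and the constant $n_0$ do not spoil the uniformity in $n$. We handle small $n<n_0$ by noting that there are only finitely many laws, each with a finite third moment.
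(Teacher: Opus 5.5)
Your proposal is correct and follows essentially the same route as the paper. Both use tail integration for the truncated moments, a bound $|\zeta_n|\le 2|\xi|+2$ for the uniform tail, and, for the signed Laplace transform, the centering identity $\mathbb{E}\zeta_n e^{-s\zeta_n^2}=-\mathbb{E}\zeta_n(1-e^{-s\zeta_n^2})$ with cuts at a fixed level (your $A$, the paper's $M$) and at $s^{-1/2}$, giving a bound $sA+C\vartheta(A)s^{3/4}$.

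Your intermediate cut at $s^{-1/4}$ is unnecessary. With it, the middle piece is really $O(\vartheta(A)s^{7/8})$, not $=\vartheta(A)s^{3/4}$, but this is harmless. The paper also simply takes $\omega(t)=\sup_n\sup_{0<s\le t}s^{-3/4}|\mathbb{E}\zeta_n e^{-s\zeta_n^2}|$ instead of an explicit formula.
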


Let $\mathbf{X}$ have i.i.d. entries with the law $\xi$ in Theorem~\ref{thm:weak-five-halves}. Couple $\mathbf{X}$, $\widehat{\mathbf{X}}$, $\widetilde{\mathbf{X}}$ and $\mathbf{Z}$ entrywise through \eqref{lt:eq:fixed-array-law}, with respective entry laws $\xi$, $\widehat\xi_n$, $\widetilde\xi_n$ and $\zeta_n$. For each of these matrices $\mathbf{A}$, use the notation $\mathbf{P}(\mathbf{A})$ for its null-space projection and $R(\mathbf{A})=\mathbf{1}^\top\mathbf{P}(\mathbf{A})\mathbf{1}$, as in Section~\ref{sec:definitions}. On $\mathcal F(\mathbf{A})$,
\[
 \mathbf{P}(\mathbf{A})=\mathbf{I}_n-\mathbf{A}^\top
            (\mathbf{A}\mathbf{A}^\top)^{-1}\mathbf{A}.
\]
With the exact update recursion and Lemma \ref{lem:five-halves-truncation}, 
the following lemma shows that, under \eqref{eq:weak-five-halves}, truncation at $n^{2/3}$ does not affect the asymptotic behavior of $R(\mathbf{X})$ at the CLT scale in Theorem \ref{thm:weak-five-halves}; the proof is deferred to Appendix \ref{app:prf:diff_truncation}.
\begin{lemma}\label{sat:lem:truncation}	
Let $\xi$ be a fixed nondegenerate real law with
$\mathbb{E}\xi=0$ and $\mathbb{E}\xi^2=1$, and suppose that
$p\leq\gamma_0n$ for a fixed $\gamma_0<1$.  Let $\tau_n\to\infty$,
and define
\[
 \widehat\xi=\xi\mathbbm{1}_{\{|\xi|\leq\tau_n\}},\qquad
 q_n=\mathbb{P}(|\xi|>\tau_n),\qquad
 \mu_n=\mathbb{E}\widehat\xi.
\]
Assume that $nq_n\to0$.  Couple $\mathbf{X}$ and
$\widehat{\mathbf{X}}$ entrywise using $\xi$ and $\widehat\xi$.
Then
\begin{equation}\label{sat:eq:transfer}
 \mathbb{E}|R(\mathbf{X})-R(\widehat{\mathbf{X}})|
 \leq Cpnq_n(1+n\mu_n^2)+Cn^C e^{-cn}.
\end{equation}
In particular, under \eqref{eq:weak-five-halves} with $\tau_n=n^{2/3}$ in Lemma \ref{lem:five-halves-truncation}, we have $\mathbb{E}|R(\mathbf{X})-R(\widehat{\mathbf{X}})|=o(n^{1/3})$.
\end{lemma}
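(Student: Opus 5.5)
The plan is to pass from $\mathbf{X}$ to $\widehat{\mathbf{X}}$ one row at a time, changing only rows that contain at least one large entry. Let $\mathcal{B}$ be the set of rows $i$ with $\max_j|X_{ij}|>\tau_n$. Then $\mathbb{E}|\mathcal{B}|\le pnq_n$. Moreover $\mathbb{P}(|\mathcal{B}|\ge 2)\le (pnq_n)^2$, which is negligible relative to $pnq_n$ because $nq_n\to0$ and $p\le \gamma_0 n$ (we only need $pnq_n$ small, or else the bound is trivial since $R\le n$). Since $0\le R\le n$ always, rank-failure events cost at most $n\,\mathbb{P}(\mathcal{F}^c\cup\mathcal{F}(\widehat{\mathbf X})^c)$. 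By Lemma~\ref{lem:rank} this is $\le n^Ce^{-cn}$: the maximal atom of $\widehat\xi$ is at most $q+q_n<1$ eventually, and $p\le\gamma_0 n$.

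Next, we compare a single row change. Order the rows so that the common rows $\mathbf{B}$ (those not in $\mathcal{B}$) come first. Let $\mathbf{J}=\mathbf{P}(\mathbf{B})$ and $\mathbf{t}=\mathbf{J}\mathbf{1}$. Then, by \eqref{eq:five-halves-projection-recursion}, adding the bad rows in either their original or truncated form gives
\[
|R(\mathbf{X})-R(\widehat{\mathbf X})|\le \sum_{i\in\mathcal{B}}\Big(\frac{(\mathbf{t}_{i}^\top\mathbf{z}_i)^2}{\mathbf{z}_i^\top\mathbf{J}_i\mathbf{z}_i}+\frac{(\mathbf{t}_{i}^\top\widehat{\mathbf{z}}_i)^2}{\widehat{\mathbf{z}}_i^\top\mathbf{J}_i\widehat{\mathbf{z}}_i}\Big),
\]
where $\mathbf{J}_i$ and $\mathbf{t}_i$ are the intermediate projections. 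Each decrement equals $\|\mathbf{J}_i\mathbf{1}\|^2\cos^2\theta\le R$, so crude bounds alone lose. Instead we bound the decrement by dropping the denominator's dependence on the heavy coordinates. Write $(\mathbf{t}^\top\mathbf{z})^2/\mathbf{z}^\top\mathbf{J}\mathbf{z}=\sup_{\mathbf{w}\in\mathrm{span}(\mathbf{J}\mathbf{z})}(\mathbf{1}^\top\mathbf{w})^2/\|\mathbf{w}\|^2$. The decrement is then at most the squared length of the projection of $\mathbf{t}$ onto one direction. A cleaner route is to bound the truncated-row decrement directly: conditionally on $\mathbf{J}$, the entries of $\widehat{\mathbf{z}}$ are independent with mean $\mu_n$ and variance at most $1$. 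Hence
\[
\mathbb{E}(\mathbf{t}^\top\widehat{\mathbf z})^2\le \|\mathbf t\|^2+\mu_n^2(\mathbf{1}^\top\mathbf t)^2\le R(\mathbf B)(1+n\mu_n^2),
\]
while the lower-tail estimate Lemma~\ref{lem:lower}/Lemma~\ref{lem:positive-form} gives $\widehat{\mathbf{z}}^\top\mathbf{J}\widehat{\mathbf z}\ge c\,\tr\mathbf{J}\ge c(n-p)$ outside an event of probability $e^{-cn}$. Since $R(\mathbf B)\le n$ and $n-p\ge(1-\gamma_0)n$, each truncated decrement has conditional mean $O(1+n\mu_n^2)$. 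The difficulty is that the row index is selected by the event $\{i\in\mathcal{B}\}$, which depends on the row. To handle this, we condition on $\{i\in\mathcal{B}\}$; this changes the law of at least one entry, but row $i$ is independent of the other rows. The mean and variance bounds for the entries then still hold up to constants: the entries other than the large one are conditionally i.i.d. truncated, and the large one is set to $0$ after truncation. This gives $\mathbb{E}[\text{decrement}\,\mathbbm{1}_{i\in\mathcal B}]\le Cnq_n(1+n\mu_n^2)$.

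For the original row, with its huge entry, the same mean bound fails. We therefore use symmetry of roles together with monotonicity. Adding a row only decreases $R$, so $R(\mathbf{X})$ and $R(\widehat{\mathbf X})$ both lie between $R(\mathbf{B}_{\rm all\ bad\ added})$-type quantities and $R(\mathbf{B})$. Hence
\[
|R(\mathbf X)-R(\widehat{\mathbf X})|\le \big(R(\mathbf B)-R(\mathbf X)\big)+\big(R(\mathbf B)-R(\widehat{\mathbf X})\big).
\]
This still requires the original-row decrement. For that term I would use exchangeability over the $p$ rows: $\mathbb{E}[(R(\mathbf{X}_{-i})-R(\mathbf{X}))\mathbbm{1}_{i\in\mathcal B}]$ is compared, via the conditional law of row $i$ given $i\in\mathcal{B}$, with the expected total decrement $\mathbb{E}[R(\mathbf X_{-i})-R(\mathbf X)]$. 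The latter equals $\mathbb{E}R(\mathbf{X}_{-i})-\mathbb{E}R(\mathbf X)=O(1)$ plus the off-diagonal bias. The step I expect to be the main obstacle is exactly this: controlling the decrement from a row containing one huge entry. The natural bound is the deterministic one. Decompose $\mathbf{z}=\mathbf{z}'+X_{ij}\mathbf{e}_j$, and note that the decrement is at most $2$ times the decrement of $\mathbf{z}'$ plus the decrement of the single direction $\mathbf{J}\mathbf e_j$. That single-direction decrement is at most $(\mathbf t^\top\mathbf e_j)^2/J_{jj}=t_j^2/J_{jj}$, whose expectation is $O(1)$ by column exchangeability ($\sum_j t_j^2=R\le n$ and $J_{jj}\ge c$ with high probability). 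This yields the claimed $Cpnq_n(1+n\mu_n^2)$. Finally, with $\tau_n=n^{2/3}$, Lemma~\ref{lem:five-halves-truncation} gives $pnq_n=o(n^{1/3})$ and $n\mu_n^2=o(1)$, which proves the final assertion.
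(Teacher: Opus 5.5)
Your overall plan is sound: only entries above $\tau_n$ matter, and the decrement caused by a row with a huge coordinate must be bounded uniformly in that coordinate. However, the step you yourself single out as the main obstacle rests on an inequality that is false, and the supporting moment bound is not justified.

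\textbf{The deterministic inequality.} Write $\mathbf{a}=\mathbf{J}\mathbf{z}'$ and $\mathbf{g}=\mathbf{J}\mathbf{e}_j$. The decrement is $\langle\mathbf{t},\mathbf{w}\rangle^2/\|\mathbf{w}\|^2$ with $\mathbf{w}=\mathbf{a}+x\mathbf{g}$. Suppose $\mathbf{a}$ and $\mathbf{g}$ are nearly parallel and both nearly orthogonal to $\mathbf{t}$. The value of $x$ that cancels their common part leaves $\mathbf{w}$ almost parallel to $\mathbf{t}$. The decrement is then close to $\|\mathbf{t}\|^2$, while both the $\mathbf{J}$-decrement of $\mathbf{z}'$ and $t_j^2/J_{jj}$ are tiny. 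So no constant in front of the $\mathbf{J}$-decrement of $\mathbf{z}'$ rescues the bound. The correct version uses row-space monotonicity. Since $\mathbf{z}\in\operatorname{span}(\mathbf{e}_j,\mathbf{z}')$, adding $\mathbf{z}$ removes at most what adding $\mathbf{e}_j$ and then $\mathbf{z}'$ removes. Equivalently, split $\mathbf{J}=\mathbf{e}\mathbf{e}^\top+\mathbf{K}$ with $\mathbf{e}=\mathbf{J}\mathbf{e}_j/\sqrt{J_{jj}}$ and apply Cauchy--Schwarz:
\[
\frac{(\mathbf{t}^\top\mathbf{z})^2}{\mathbf{z}^\top\mathbf{J}\mathbf{z}}\le\frac{t_j^2}{J_{jj}}+\frac{(\mathbf{1}^\top\mathbf{K}\mathbf{z}')^2}{(\mathbf{z}')^\top\mathbf{K}\mathbf{z}'}\qquad\text{for every value of }x=z_j .
\]
The second term is the decrement of $\mathbf{z}'$ relative to $\mathbf{K}$, not $\mathbf{J}$. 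This is the paper's bound $b^2+U^2/V_0$, and your truncated-row argument then handles the second term.

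\textbf{The moment bound on $t_j^2/J_{jj}$.} The facts $\mathbb{E}t_j^2\le1$ and ``$J_{jj}\ge c$ with high probability'' do not give $\mathbb{E}[t_j^2/J_{jj}]=O(1)$. On $\{J_{jj}<c\}$ you only know $t_j^2/J_{jj}\le n$. But $\mathbb{P}(J_{jj}<c)$ need not be $O(1/n)$: it is already of larger order once column $j$ of the common rows contains an entry of size $\gg\sqrt n$. The paper instead uses the leave-one-column identity $t_j=J_{jj}(1-\mathbf{b}_j^\top\mathbf{u})$ with $\mathbf{u}=\mathbf{M}\mathbf{B}_{-j}\mathbf{1}$. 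This gives $t_j^2/J_{jj}\le(1-\mathbf{b}_j^\top\mathbf{u})^2$, with mean $O(1+n\mu_n^2)$. The term $n\mu_n^2$ is missing from your $O(1)$.

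\textbf{The bookkeeping.} Putting the good rows first and conditioning on $\mathcal{B}$ creates problems you do not resolve.
\begin{itemize}
\item You cannot assume $pnq_n$ is small. The target bound is nontrivial whenever $pq_n\to0$, which always holds here. In the application $pnq_n=o(n^{1/3})$ may diverge, so several bad rows are typical.
\item Your intermediate projections $\mathbf{J}_i$ in the original chain then contain earlier original bad rows. Conditionally on their row being bad, their entries have second moments at least $\mathbb{E}[\xi^2\mathbbm{1}_{\{|\xi|>\tau_n\}}]/(nq_n)\ge\tau_n^2/n$, which is unbounded for $\tau_n=n^{2/3}$. The moment and small-ball inputs you need relative to $\mathbf{J}_i$ are therefore not justified.
\item A bad row may contain several large coordinates, which your single-coordinate split does not cover.
\item The exchangeability comparison with $\mathbb{E}[R(\mathbf{X}_{-i})-R(\mathbf{X})]$ gives nothing for a row selected by the event $\{i\in\mathcal{B}\}$.
\end{itemize}
All of this disappears if you replace one coordinate at a time in a fixed order, with the common rows being unconditioned original or truncated rows, as the paper does. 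The change is then at most $\mathbbm{1}_{\{|x|>\tau_n\}}(b^2+U^2/V_0)$, and the indicator is independent of $(b,U,V_0)$. The expectation therefore factorizes as $q_n\,\mathbb{E}(b^2+U^2/V_0)\le Cq_n(1+n\mu_n^2)+Cnq_ne^{-cn}$. Summing over the $pn$ replacements gives \eqref{sat:eq:transfer}.
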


\subsection{Conditional comparison with the symmetrized row}\label{sec:lindeberg}
With Lemma \ref{sat:lem:truncation}, it suffices to consider the truncated and standardized version ${\mathbf{Z}}$ with i.i.d. entries having the law of $\zeta_n$ in Lemma \ref{lem:five-halves-truncation}.
Let $\varepsilon$ be an independent Rademacher variable and set
$\zeta_n^\circ=\varepsilon\zeta_n$. This symmetrization preserves
the distribution of the square and hence $k_n$ and $l_n$.
Let $\mathbf{Z}^\circ$ have independent entries with the law of $\zeta_n^\circ$, and
write $\mathbf{P}^\circ=\mathbf{P}(\mathbf{Z}^\circ)$. We first verify the symmetric reference limit $R(\mathbf{Z}^{\circ})$ and then control the difference of $R(\mathbf{Z})$ and $R(\mathbf{Z}^{\circ})$. 

First, we show the asymptotic normality of $R(\mathbf{Z}^{\circ})$ by Lemma \ref{lem:sign}. Notice that the column-sign invariance gives, for an independent Rademacher vector
$\boldsymbol{\varepsilon}$,
\[
R(\mathbf{Z}^\circ)-m
\overset d=\boldsymbol{\varepsilon}^\top\mathbf{D}^\circ
\boldsymbol{\varepsilon},
\]
where $\mathbf{D}^\circ=\mathbf{P}^\circ-\diag(\mathbf{P}^\circ)$. To apply Lemma \ref{lem:sign}, it suffices to estimate $\operatorname{tr}[(\mathbf{D}^\circ)^2]$.
Delete two columns $j\ne k$ of $\mathbf{Z}^{\circ}$ and let $\mathbf{M}$ be the inverse Gram matrix of the remaining columns. By \eqref{sym:eq:hybrid-smallest} in Lemma \ref{lem:lower}, we have 
\[
\mathbb{P}(\|\mathbf{M}\|\ge C/n)\le Ce^{-cn}.
\]
The columns are independent, centered and variance one, with fourth
moment $k_n=o(n)$ by Lemma \ref{lem:five-halves-truncation}. The two-column Woodbury identity and the conditional
variance formula for a quadratic form therefore give
\begin{equation*}
	\begin{split}
	\mathbb{E}(P^\circ_{jj}-P^\circ_{kk})^2\le &\mathbb{E}(P^\circ_{jj}-P^\circ_{kk})^2\mathbbm{1}_{\|\mathbf{M}\|\le C/n}+\mathbb{P}(\|\mathbf{M}\|\ge C/n)\\ \le&\mathbb{E}|\mathbf{z}_j^\top\mathbf{M}\mathbf{z}_j
	-\mathbf{z}_k^\top\mathbf{M}\mathbf{z}_k|^2\mathbbm{1}_{\|\mathbf{M}\|\le C/n}+Ce^{-cn}\\
	=&\mathbb{E}\big((4\operatorname{tr}\mathbf{M}^2
	+2(k_n-3)\sum_iM_{ii}^2)\mathbbm{1}_{\|\mathbf{M}\|\le C/n}\big)+Ce^{-cn}
	\le Ck_n/n+Ce^{-cn}.
	\end{split}
\end{equation*}
Column exchangeability and $\sum_{j=1}^n(P^\circ_{jj}-m/n)=0$ now yield
\begin{equation}\label{sym:eq:symmetric-diagonal}
 \mathbb{E}(P^\circ_{jj}-m/n)^2
 =\frac{n-1}{2n}\mathbb{E}(P^\circ_{jj}-P^\circ_{kk})^2
 \le Ck_n/n+Ce^{-cn},
\end{equation}
which further implies
\[
 \operatorname{tr}[(\mathbf{D}^\circ)^2]
 =m-\sum_{j=1}^n(P^\circ_{jj})^2
 =\frac{mp}{n}-\sum_{j=1}^n(P^\circ_{jj}-m/n)^2
 =\frac{mp}{n}+o_{\mathbb{P}}(n)
\]
by Markov's inequality.
Conditional on $\mathbf{P}^\circ$, Lemma~\ref{lem:sign} applies with
$\sigma_{n,\circ}^2=2\operatorname{tr}[(\mathbf{D}^\circ)^2]$ and $\|\mathbf{D}^\circ\|\le2$. We have
$\sigma_{n,\circ}^2/(2mp/n)\pto1$ and
$\|\mathbf{D}^\circ\|/\sigma_{n,\circ}\pto0$. Thus, Lemma~\ref{lem:sign} yields
\begin{equation}\label{sym:eq:reference-clt}
 \frac{R(\mathbf{Z}^\circ)-m}{\sqrt{2mp/n}}\dto N(0,1).
\end{equation}
The same sign representation gives
$\mathbb{E}R(\mathbf{Z}^\circ)=m$ exactly.
In what follows, we compare $R(\mathbf{Z})$ with this symmetric
reference $R(\mathbf{Z}^{\circ})$ by replacing one row at a time.
In view of \eqref{eq:five-halves-projection-recursion}, the following lemma controls the conditional mean change caused by a row replacement. The proof relies on a regularized expansion of the denominator with exact cancellation of the leading diagonal term under sign symmetrization and Laplace-transform estimates for the remaining terms. The details are given in Appendix~\ref{app:prf:diff_symmetry}.

\begin{lemma}[Conditional mean comparison]
\label{sr:lem:row-mean}
	Under the assumptions of Lemma~\ref{lem:five-halves-truncation}, let $\mathbf z=(\zeta_{n1},\ldots,\zeta_{nn})^\top$ have independent coordinates with the law $\zeta_n$, and let $k_n,l_n$ be as in \eqref{eq:five-halves-truncated-moments}. 
Let $\mathbf{J}$ be a deterministic orthogonal projection on
$\mathbb{R}^n$ of rank $r\ge\delta n$, where $\delta>0$ is fixed.
Let $\mathbf{u}$ satisfy $\mathbf{J}\mathbf{u}=\mathbf{u}$, and set
$a=\|\mathbf{u}\|^2\le a_0$ for a fixed $a_0$. For independent Rademacher
variables $\varepsilon_j$, define
\[
 \mathbf{z}^\circ=(\varepsilon_1z_1,\ldots,\varepsilon_nz_n)^\top,
 \qquad
 \theta(\mathbf{z})=
 \frac{(\mathbf{u}^\top\mathbf{z})^2}{\mathbf{z}^\top\mathbf{J}\mathbf{z}},
\]
with value zero if $\mathbf{z}^\top\mathbf{J}\mathbf{z}=0$. Then, uniformly over
$\mathbf{J},\mathbf{u}$,
\begin{equation}\label{sr:eq:mean-comparison}
 \left|\mathbb{E}\theta(\mathbf{z})
       -\mathbb{E}\theta(\mathbf{z}^\circ)\right|
 =o(a n^{-3/2}).
\end{equation}
More precisely, there is a deterministic $\rho_n\to0$, depending
only on the family of laws and the fixed constants, such that the
left-hand side is at most
\begin{equation}\label{sr:eq:mean-comparison-rate}
 Ca\left\{
 \frac{\rho_n}{n^{3/2}}+\frac1{n^2}
 +\frac{l_n^2}{n^{5/2}}+\frac{l_n^2}{n^3}
 +\frac{l_n^4}{n^4}+\frac{k_n}{n^3}
 +n^2e^{-cn}\right\}.
\end{equation}
The conclusion holds conditionally for random $\mathbf{J},\mathbf{u}$
independent of the row being compared.
\end{lemma}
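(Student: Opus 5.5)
The plan is to split $V=\mathbf{z}^\top\mathbf{J}\mathbf{z}=D+O$, with diagonal part $D=\sum_j J_{jj}z_j^2$ and off-diagonal part $O=\sum_{j\ne k}J_{jk}z_jz_k$. Since $r\ge\delta n$, the typical size of $V$ is $r\asymp n$. I would normalize by $r$, set $F=1+D/r$ (or $F=c+D/r$ with a regularizing constant), and expand
\[
\frac{A}{V}=\frac{A}{r}\Bigl(F^{-1}-F^{-2}\frac{O}{r}+F^{-3}\frac{O^2}{r^2}-\cdots\Bigr)+\text{remainder},\qquad A=(\mathbf{u}^\top\mathbf{z})^2,
\]
keeping two or three terms. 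Before expanding I would restrict to the event $\{V\ge cr\}$. Its complement has probability $e^{-cn}$ by Lemma~\ref{lem:lower} and the lower-tail estimate for positive forms, which accounts for the $n^2e^{-cn}$ term in \eqref{sr:eq:mean-comparison-rate}. On this event the expansion remainder is bounded by $C A O^{3}/r^{4}$-type quantities, and their moments are controlled by $k_n=o(n)$.

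\textbf{Leading term.} Write $A=\sum_j u_j^2z_j^2+\sum_{j\ne k}u_ju_kz_jz_k$. In $\mathbb{E}[A F^{-1}]$, the diagonal part $\sum_j u_j^2 z_j^2F^{-1}$ depends only on $(z_j^2)$, so it is identical under $\mathbf{z}\mapsto\mathbf{z}^\circ$ and cancels exactly. Under the symmetrized law, every off-diagonal term has mean zero. Under the original law, I would use the Laplace representation $F^{-q}=\Gamma(q)^{-1}\int_0^\infty t^{q-1}e^{-tF}\,dt$. Since $e^{-tF}=e^{-t}\prod_i e^{-tJ_{ii}z_i^2/r}$ factorizes, each off-diagonal term becomes
\[
\int_0^\infty e^{-t}\,u_ju_k\,g_n(tJ_{jj}/r)\,g_n(tJ_{kk}/r)\prod_{i\ne j,k}\mathbb{E}e^{-tJ_{ii}z_i^2/r}\,dt,\qquad g_n(s)=\mathbb{E}\{\zeta_ne^{-s\zeta_n^2}\}.
\]
Bound \eqref{sr:eq:signed-laplace-bound} gives $|g_n(s)|\le\omega\,s^{3/4}$ with $s\asymp t/n$. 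The product of the two signed factors is therefore $o(t^{3/2}n^{-3/2})$, and the remaining product of Laplace factors is at most $e^{-ct}$ for $t$ up to order $n$. Summing $|u_ju_k|$ over $j\ne k$ gives at most $\|\mathbf{u}\|_1^2\le na$. Combined with the prefactor $1/r$, this yields $o(a\,n^{-3/2})$, with $\rho_n$ coming from $\omega(t_n)$ along $t_n\to0$. Large $t$, where $s$ is not small, is killed by the factor $e^{-ct}$ together with the boundedness of $\omega$.

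\textbf{First-order correction.} The next term is $-\mathbb{E}[A\,O\,F^{-2}]/r^2$. Expanding $A\cdot O$ into monomials in the $z$'s, a term survives under the symmetric law only if every index appears an even number of times, and those fully even terms again cancel exactly. The non-even monomials involve odd powers of some $z_j$, each of which contributes a factor $g_n$ or a third-moment-type factor $\mathbb{E}\zeta^3e^{-s\zeta^2}$ of size at most $l_n$. Counting indices, using $\sum_k J_{jk}^2=J_{jj}\le1$, $|J_{jk}|\le1$ and $\|\mathbf{u}\|^2=a$, should produce the terms $l_n^2/n^{5/2}$, $l_n^2/n^3$ and $1/n^2$. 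The second-order term and the remainder are handled without cancellation, by crude moment bounds: $\mathbb{E}O^4\lesssim n^2+k_n^2$ and $\mathbb{E}A^2\lesssim a^2(1+k_n/n)$. These should give $k_n/n^3$ and $l_n^4/n^4$. For the conditional version with random $\mathbf{J},\mathbf{u}$ independent of $\mathbf{z}$, the statement follows by applying the deterministic bound conditionally, since every estimate is uniform in $\mathbf{J}$ and $\mathbf{u}$.

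\textbf{Main obstacle.} I expect the hardest step to be the leading off-diagonal sum. Getting $o(n^{-3/2})$ requires exploiting the $s^{3/4}$ decay twice, and the Laplace integral has to be split carefully between small and large $t$ to keep uniformity. I would first try restricting to $t\le n^{1/2}$ and bounding the tail crudely. Second in difficulty is the bookkeeping in the $AOF^{-2}$ term: mixed $j,k$ coincidences produce $z_j^3$ factors whose $l_n$ growth must be offset by extra powers of $n^{-1}$.
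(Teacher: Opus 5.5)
Your architecture is the paper's. You split $V=D+O$, expand $1/V$ around a regularized diagonal part, cancel the $A_{\rm diag}$ contribution exactly, and bound the off-diagonal leading term through the factorized Laplace integral, applying the signed bound $\omega(s)s^{3/4}$ to two coordinates. Your exact cancellation of the even monomials in $AO$ is valid but not needed. The paper bounds $|\mathbb{E}(AO/F^2)|$ separately for each law: the even part is at most $Ca$ by $\|\mathbf{B}\|\le 2$, and $F^{-2}$ already contributes $n^{-2}$. Two points need fixing, and the second is a genuine gap.

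\textbf{The regularizer.} With $F=1+D/r$, your series sums to $A/(r+V)$, not $A/V$. Since $V\approx r$, these differ at order $a/n$. The regularizer must be of order $1/r$ on your scale, i.e.\ $F=1+D$ unnormalized as in the paper. This gives the exact identity
\[
\theta=\frac AF-\frac{AO}{F^2}+\frac A{F^2}+\frac{A(O-1)^2}{F^2V},
\]
with $\mathbb{E}(A/F^2)=O(an^{-2})$. The identity holds on the whole space, so the Laplace factorization is applied without first restricting to $\{V\ge cr\}$; an indicator would destroy the product structure. With this regularizer, the range where the tilt exceeds one is controlled by $\prod_h\phi_h(1)\le e^{-cn}$, not by $e^{-ct}$.

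\textbf{The remainder (the gap).} The bounds $\mathbb{E}A^2\lesssim a^2(1+k_n/n)$ and $\mathbb{E}O^4\lesssim n^2+k_n^2$ do not hold uniformly over the admissible $\mathbf{J},\mathbf{u}$, and uniformity is what the lemma asserts.
\begin{itemize}
\item Since $\mathbb{E}A^2=3a^2+(k_n-3)\sum_j u_j^4$ and only $\mathbf{J}\mathbf{u}=\mathbf{u}$ is assumed, the choice $\mathbf{u}=\sqrt a\,\mathbf{e}_1$ with $\mathbf{J}\mathbf{e}_1=\mathbf{e}_1$ gives $\mathbb{E}A^2=a^2k_n$.
\item $\mathbb{E}O^4$ contains $k_n^2\sum_{j\ne k}J_{jk}^4$. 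This is of order $nk_n^2$ if $\mathbf{J}$ has order $n$ diagonal $2\times 2$ blocks with all entries $1/2$.
\end{itemize}
Both can occur at once. Then Cauchy--Schwarz gives only $\mathbb{E}(AO^2)\lesssim a(\sqrt{k_n}\,n+k_n^{3/2}\sqrt n)$, i.e.\ a remainder of order $ak_n^{3/2}n^{-5/2}$. This is not $o(an^{-3/2})$ once $k_n\gg n^{2/3}$. Lemma~\ref{lem:five-halves-truncation} only gives $k_n=o(n)$, and $k_n\asymp n/\log n$ does occur.

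Yet in this example $z_1$ is independent of $O$, so $\mathbb{E}(AO^2)=a\,\mathbb{E}O^2\le 2an$. The loss comes entirely from decoupling $A$ and $O$.

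The missing step is to compute $\mathbb{E}(AO^2)$ exactly, as in \eqref{sr:eq:AO2-identity}. Because $O$ has zero diagonal, no coordinate appears more than four times, so only $k_n$ and $\mu_3^2\le k_n$ enter. The projection structure ($\mathbf{J}\mathbf{u}=\mathbf{u}$, $\|\mathbf{B}\|\le 2$, $\sum_j u_j^2(\mathbf{B}^2)_{jj}\le 4a$) then gives $\mathbb{E}(AO^2)\le Ca(n+k_n)$. On $\{F\ge cn,\,V\ge cn\}$ this yields the term $Ca(n+k_n)n^{-3}$.

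For the same reason, stop at the exact two-term identity above rather than keeping an $F^{-3}O^2$ term. A remainder of type $A|O|^3$ involves moments of order five and six, which $k_n$ does not control.
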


Next, we consider the Lindeberg replacement argument row by row. Let $\mathbf{Z}^{(i)}$ use the law of $\zeta_n$ in its first $i$ rows
and the law of $\zeta_n^\circ$ in its remaining rows.  Thus
$\mathbf{Z}^{(0)}\overset d=\mathbf{Z}^\circ$ and
$\mathbf{Z}^{(p)}\overset d=\mathbf{Z}$.
For the comparison of $\mathbf{Z}^{(i-1)}$ and $\mathbf{Z}^{(i)}$,
condition on their $p-1$ common rows, whose projection is defined by $\mathbf{J}$. Note that the entries of each intermediate matrix $\mathbf{Z}^{(i)}$ are independent but need not be identically distributed. 
For $1\le i\le p$, we call the matrix formed by the $p-1$ rows shared by $\mathbf{Z}^{(i-1)}$ and $\mathbf{Z}^{(i)}$ a common-row matrix, and refer to the remaining row in either matrix as a fresh row. It follows that a fresh row consists of i.i.d. components with the law either of $\zeta_n$ or $\zeta_n^{\circ}$.
Moreover, one can verify that the common-row matrices satisfy case~(i) of Lemma~\ref{lem:projection-moments} in Appendix~\ref{app:projection-moments}. Its uniform fourth-moment estimate controls the increments of row replacement below. 

We first formulate the increment estimates for the common rows of a single row replacement.  Let $\mathbf{B}$ be any permitted
$(p-1)\times n$ matrix and set
\[
\mathbf{J}=\mathbf{P}(\mathbf{B}),\qquad r=n-p+1=m+1,\qquad
\mathbf{t}=\mathbf{J}\mathbf{1},\qquad
\mathbf{u}=\frac{\mathbf{t}}{\sqrt r},\qquad
a=\|\mathbf{u}\|^2=\frac{\mathbf{1}^\top\mathbf{J}\mathbf{1}}{r}.
\]
In particular, $0\leq a\leq n/r\leq C$ deterministically.  For an
independent row with either law, write
\begin{equation}\label{sym:eq:increment-definitions}
A=(\mathbf{u}^\top\mathbf{z})^2,\qquad
V=\mathbf{z}^\top\mathbf{J}\mathbf{z},\qquad
\Delta=rA/V.
\end{equation}
The ratio is zero if $V=0$. It follows that $0\leq\Delta\leq n$. By some elementary algebra, we get
\begin{equation}\label{sym:eq:increment-moments}
\mathbb{E}(A\mid\mathbf{J})=a,\qquad
\mathbb{E}(A^2\mid\mathbf{J})
=3a^2+(k_n-3)\sum_{j=1}^n u_j^4,
\qquad \mathbb{E}\sum_{j=1}^n u_j^4\leq C/n,
\end{equation}
where the last estimate follows from  \eqref{sym:eq:t-fourth} in
Lemma~\ref{lem:projection-moments} and the fact  $\mathbf{u}=\mathbf{t}/\sqrt r$ with
$r=m+1\asymp n$. 
All assertions and limits below are uniform over the choices of the
common-row laws and over the two possibilities for the replaced row. The proof is given in Appendix \ref{app:prf_uniformint}.

\begin{lemma}\label{sym:lem:increments}
	Under the settings of Lemma \ref{lem:five-halves-truncation}, the following estimates hold, with all suprema taken over the intermediate matrices and fresh-row law choices:
	\begin{align}
	&\sup\mathbb{E}\Delta^2\leq C,
	\label{sym:eq:Delta-second}\\
	&\lim_{K\to\infty}\limsup_{n\to\infty}
	\sup\mathbb{E}\bigl[A^2\mathbbm{1}_{\{A>K\}}\bigr]=0,
	\label{sym:eq:A-UI}\\
	&\sup\mathbb{E}(\Delta-A)^2\rightarrow 0,
	\label{sym:eq:Delta-A-L2}\\
	&\lim_{K\to\infty}\limsup_{n\to\infty}
	\sup\mathbb{E}\bigl[\Delta^2\mathbbm{1}_{\{\Delta>K\}}\bigr]=0.
	\label{sym:eq:Delta-UI}
	\end{align}
\end{lemma}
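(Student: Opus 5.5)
The plan is to prove the four estimates in the order (A-UI), then ($\Delta-A$ in $L^2$), then deduce the two $\Delta$ statements. The key structural input is a lower bound on the denominator. Since $r=m+1\ge(1-\gamma_0)n$, $\mathbf J$ is a projection of rank $\asymp n$ and is independent of the fresh row $\mathbf z$. Lemma~\ref{lem:lower} (or Lemma~\ref{lem:positive-form} applied to $\mathbf J$) gives $\mathbb P(V<a_0 r)\le Ce^{-cn}$. On the complementary event $\Delta\le A/a_0$, while trivially $\Delta\le n$. Hence
\[
\mathbb E\Delta^2\le a_0^{-2}\mathbb E A^2+n^2Ce^{-cn}.
\]
Combined with \eqref{sym:eq:increment-moments}, this gives
\[
\mathbb E A^2\le 3C^2+k_n\,\mathbb E\sum_j u_j^4\le C+Ck_n/n=O(1),
\]
and this proves \eqref{sym:eq:Delta-second} uniformly over the intermediate laws.

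For \eqref{sym:eq:A-UI}, I would work conditionally on $\mathbf J$. Given $\mathbf J$, $\mathbf u^\top\mathbf z$ is a weighted sum of independent centered unit-variance variables with weights $u_j$ and $\|\mathbf u\|^2=a\le C$. Its fourth moment is $3a^2+(k_n-3)\sum u_j^4$, and the correction has expectation $O(k_n/n)\to0$. I would argue by contradiction along a subsequence:
\begin{enumerate}
\item Take a subsequence and realizations of common-row laws on which $\mathbb E[A^2\mathbbm 1_{\{A>K\}}]$ stays bounded below for every $K$.
\item Since $\sum_j u_j^4\to 0$ in probability, the conditional Lindeberg condition holds, using the uniform tail bound \eqref{lt:eq:fixed-uniform-tail} for the fresh entries.
\item Then $\mathbf u^\top\mathbf z$ is asymptotically a Gaussian mixture $\sqrt{a}\,N$, with $a$ along a further subsequence converging in law.
\item Convergence in law together with convergence of the fourth moments, $\mathbb E A^2\to 3\mathbb E a^2$ (the limit's fourth moment), yields uniform integrability of $A^2$, which contradicts the first step.
\end{enumerate}
This Gaussian-mixture/fourth-moment step is where I expect the main technical friction. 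One needs tightness and convergence of $\mathbb E a^2$, which holds since $a\le C$ is bounded. One also needs the mixed moment $\mathbb E[(k_n-3)\sum u_j^4]\to0$ uniformly over the intermediate row laws, and here the uniform estimate from Lemma~\ref{lem:projection-moments} is exactly what is needed.

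For \eqref{sym:eq:Delta-A-L2}, write $\Delta-A=A(r/V-1)$. I would show $V/r\to1$ in $L^2$ conditionally: $\mathbb E(V\mid\mathbf J)=r$ and
\[
\Var(V\mid\mathbf J)=2\tr\mathbf J^2-2\sum_j J_{jj}^2+(k_n-1)\sum_j J_{jj}^2\le 2r+k_n n,
\]
so $\Var(V/r)\le C k_n/n+C/n\to0$. On $\{V\ge a_0r\}$ we have $|r/V-1|\le a_0^{-1}|V/r-1|$, which is bounded by $a_0^{-1}+1$. Split on $\{A\le K\}$:
\begin{itemize}
\item On $\{A\le K\}$, the contribution is at most $K^2 C\,\mathbb E|V/r-1|^2\to0$.
\item On $\{A>K\}$, the contribution is at most $C\mathbb E[A^2\mathbbm 1_{\{A>K\}}]$, which is small by \eqref{sym:eq:A-UI}.
\item The exceptional event $\{V<a_0 r\}$ contributes $n^2Ce^{-cn}$.
\end{itemize}
Letting $n\to\infty$ and then $K\to\infty$ gives \eqref{sym:eq:Delta-A-L2}.

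Finally, \eqref{sym:eq:Delta-UI} follows from $\Delta^2\le 2A^2+2(\Delta-A)^2$. The second term tends to zero in $L^1$ uniformly. For the first term, $\{\Delta>K\}\subseteq\{A>K/2\}\cup\{|\Delta-A|>K/2\}$. The second event has probability tending to zero, and $A^2$ is uniformly integrable by \eqref{sym:eq:A-UI}, so $\mathbb E[A^2\mathbbm 1_{\{|\Delta-A|>K/2\}}]$ is also small.
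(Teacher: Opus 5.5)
Your proposal is correct and follows essentially the same route as the paper: the small-ball bound on $V$ for \eqref{sym:eq:Delta-second}; a subsequential Gaussian-mixture limit plus convergence of fourth moments, with a contradiction argument for uniformity, for \eqref{sym:eq:A-UI}; the conditional variance of $V$ with a split at $\{A\le K\}$ for \eqref{sym:eq:Delta-A-L2}; and $\Delta^2\le 2A^2+2(\Delta-A)^2$ for \eqref{sym:eq:Delta-UI}. The only real variation is that you get the mixture approximation from a conditional Lindeberg argument, using uniform integrability of $\zeta_n^2$ from \eqref{lt:eq:fixed-uniform-tail} and $\max_j|u_j|\to0$, whereas the paper uses the third-moment characteristic-function bound $|\psi_n(s)-e^{-s^2/2}|\le Cl_n|s|^3$ together with $\mathbb{E}\sum_j|u_j|^3\le Cn^{-1/2}$ and $l_n=o(\sqrt n)$; both work.
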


With Lemma \ref{sr:lem:row-mean}, we now proceed to show the asymptotic normality of $R(\mathbf{Z})$. Use the notation in \eqref{sym:eq:increment-definitions}, adding a superscript $\circ$ for the replacing symmetric row, i.e., $A^{\circ},V^{\circ}$ and $\Delta^{\circ}$. In view of \eqref{eq:five-halves-projection-recursion}, denote 
$R_-=\mathbf{1}^\top\mathbf{J}\mathbf{1}$ for simplicity. Then the two updates can be represented as 
\[
R_--\Delta~~\text{and}~~R_--\Delta^{\circ}.
\]
Equation~\eqref{sr:eq:mean-comparison} in Lemma~\ref{sr:lem:row-mean}, together with $r\asymp n$ and $a\le C$, gives
\begin{equation}\label{sym:eq:decrement-mean-difference}
\left|\mathbb{E}(\Delta\mid\mathbf{J})
-\mathbb{E}(\Delta^\circ\mid\mathbf{J})\right|
\leq C\rho_n n^{-1/2},
\end{equation}
which further implies
\[
|\mathbb{E} R(\mathbf{Z})-m|
\le Cp\rho_n n^{-1/2}=o(\sqrt n).
\]
Moreover, the conditional second moments of $A$ and $A^\circ$ are
identical by \eqref{sym:eq:increment-moments}, which together with  \eqref{sym:eq:Delta-A-L2} and the Cauchy--Schwarz
inequality further implies
\begin{align}\label{sym:eq:decrement-second-difference}
\sup_{1\leq i\leq p}\mathbb{E}\left|
\mathbb{E}(\Delta^2\mid\mathbf{J})
-\mathbb{E}((\Delta^\circ)^2\mid\mathbf{J})\right|\to0.
\end{align}
Fix a bounded $C^3(\mathbb{R})$ test function $f$ with bounded derivatives, and
set $x=(R_--m)/\sqrt{2v}$, where $v=mp/n$ is defined in \eqref{eq:PQR}. In what follows, we shall bound $\mathbb{E}f(x-\Delta/\sqrt{2v})-\mathbb{E}f(x-\Delta^{\circ}/\sqrt{2v})$.
By Taylor's formula, for $h\geq0$,
\[
f(x-h)=f(x)-hf'(x)+\tfrac12h^2f''(x)+\mathcal R_f(x,h),
\qquad |\mathcal R_f(x,h)|\leq\tfrac12h^2\omega_f(h),
\]
where $\omega_f(h):=\sup_{|y-z|\leq h}|f''(y)-f''(z)|$ is bounded
and tends to zero as $h\rightarrow 0$.  The coefficients $f'(x)$ and
$f''(x)$ are measurable with respect to the common rows.
For the first-order terms, summing
\eqref{sym:eq:decrement-mean-difference} over the $p$ replacements
gives an absolute error at most
\[
\frac{Cp\|f'\|_\infty\rho_n}{\sqrt{2v}\sqrt n}=o(1).
\]
For the second-order terms, \eqref{sym:eq:decrement-second-difference}
and $p/(2v)=O(1)$ give an error tending to zero.
Finally, \eqref{sym:eq:Delta-UI} implies
\[
\sup_{1\le i\le p}\mathbb{E}\left[\Delta^2\omega_f(\Delta/\sqrt{2v})
+(\Delta^\circ)^2\omega_f(\Delta^\circ/\sqrt{2v})\right]\to0.
\]
To verify this, restrict first to $\Delta\leq K$, where the
expectation is bounded by $C\omega_f(K/\sqrt{2v})$, and use uniform
integrability on $\Delta>K$; the symmetric term is the same. The sum of the Taylor remainders, each multiplied by $(2v)^{-1}$,
therefore tends to zero. It follows that
\begin{equation}\label{sym:eq:smooth-statistic-comparison}
\mathbb{E} f\left(\frac{R(\mathbf{Z})-m}{\sqrt{2v}}\right)
-\mathbb{E} f\left(\frac{R(\mathbf{Z}^\circ)-m}{\sqrt{2v}}\right)
\rightarrow  0,
\end{equation}
To deduce weak convergence from \eqref{sym:eq:smooth-statistic-comparison},
take $f(x)=\cos(tx)$ and $f(x)=\sin(tx)$ for each fixed
$t\in\mathbb{R}$. Both functions are bounded and have bounded
derivatives of every order. Thus the characteristic function of
$(R(\mathbf{Z})-m)/\sqrt{2v}$ converges to $e^{-t^2/2}$ by
\eqref{sym:eq:reference-clt}, which further implies
\begin{equation}\label{sym:eq:R-clt}
\frac{R(\mathbf{Z})-m}{\sqrt{2mp/n}}\dto N(0,1).
\end{equation}

\subsection{Completion of the proof of Theorem~\ref{thm:weak-five-halves}}\label{sec:prf:weak52}
Lemma~\ref{lem:five-halves-truncation} and the comparison above give \eqref{sym:eq:R-clt} for $\mathbf{Z}$. Since $q_n=o(n^{-5/3})$ and $|\mu_n|=o(n^{-1})$, Lemma~\ref{sat:lem:truncation} gives
\begin{equation}
 \mathbb{E}|R(\mathbf{X})-R(\widehat{\mathbf{X}})|
 \le Cn^2q_n(1+n\mu_n^2)+Cn^C e^{-cn}
 =o(n^{1/3})
 \label{lt:eq:fixed-truncation-error}
\end{equation}
by \eqref{sat:eq:transfer} in Lemma \ref{sat:lem:truncation}. Next, we shall control the error due to smoothing and centering. Notice the following elementary projection estimate. If a matrix
$\mathbf{A}_0$ has full row rank and
$\|\mathbf{E}\|\le s_p(\mathbf{A}_0)/2$, then
\begin{equation}
 \|\mathbf{P}(\mathbf{A}_0+\mathbf{E})-\mathbf{P}(\mathbf{A}_0)\|
 \le \frac{3\|\mathbf{E}\|}{s_p(\mathbf{A}_0)},\qquad
 |R(\mathbf{A}_0+\mathbf{E})-R(\mathbf{A}_0)|
 \le \frac{3n\|\mathbf{E}\|}{s_p(\mathbf{A}_0)}.
 \label{lt:eq:fixed-projection-perturbation}
\end{equation}
In fact, let $\mathbf{A}_1=\mathbf{A}_0+\mathbf{E}$ and
$\mathbf{L}_i=\mathbf{I}-\mathbf{P}(\mathbf{A}_i)$. It follows that
$s_p(\mathbf{A}_1)\ge s_p(\mathbf{A}_0)/2$ and
\[
 \|(\mathbf{I}-\mathbf{L}_1)\mathbf{L}_0\|
 =\|(\mathbf{I}-\mathbf{L}_1)(\mathbf{A}_0^\top-\mathbf{A}_1^\top)(\mathbf{A}_0\mathbf{A}_0^\top)^{-1}\mathbf{A}_0\|\le \|\mathbf{E}\|/s_p(\mathbf{A}_0),
\]
which further implies \eqref{lt:eq:fixed-projection-perturbation} by $\mathbf{L}_0-\mathbf{L}_1=(\mathbf{I}-\mathbf{L}_1)\mathbf{L}_0-(\mathbf{I}-\mathbf{L}_0)\mathbf{L}_1$.
Applying \eqref{lt:eq:external-i.i.d.-inverse} in Lemma~\ref{lem:lower} to $\widehat{\mathbf{X}}^\top$, we get
\[
\mathbb{P}(s_p(\widehat{\mathbf{X}})\le c\sqrt n)\le Ce^{-cn}
\]
The smoothing perturbation has norm at most
$\epsilon_n\sqrt{pn}$. Thus
\begin{equation}
 \mathbb{E}|R(\widetilde{\mathbf{X}})-R(\widehat{\mathbf{X}})|
 \le C\epsilon_n n^{3/2}+Cne^{-cn}=o(\sqrt n).
 \label{lt:eq:fixed-smoothing-error}
\end{equation}
Since scalar multiplication does not change a row space, we can write
\[
 R(\widetilde{\mathbf{X}})
 =R(\mathbf{Z}+d_n\mathbf{1}_p\mathbf{1}_n^\top),\qquad
 d_n=\mu_n/s_n=o(n^{-1}).
\]
Similarly, applying
\eqref{lt:eq:fixed-projection-perturbation} with perturbation norm
$|d_n|\sqrt{pn}$ gives
\begin{equation}
 \mathbb{E}|R(\widetilde{\mathbf{X}})-R(\mathbf{Z})|
 \le C|d_n|n^{3/2}+Cne^{-cn}=o(\sqrt n).
 \label{lt:eq:fixed-centering-error}
\end{equation}
Combining \eqref{lt:eq:fixed-truncation-error}, \eqref{lt:eq:fixed-smoothing-error}, and
\eqref{lt:eq:fixed-centering-error}, we obtain
\begin{equation}
 \mathbb{E}|R(\mathbf{X})-R(\mathbf{Z})|=o(\sqrt n).
 \label{lt:eq:fixed-L1-transfer}
\end{equation}

Finally, \eqref{eq:R-CLT} and $\mathbb{E} R=m+o(\sqrt n)$ follow from \eqref{sym:eq:R-clt} and \eqref{lt:eq:fixed-L1-transfer}. 
In particular, $R/m\to1$ in probability. In view of Lemma \ref{lem:rank}, on $\mathcal G$ with $\mathbb{P}(\mathcal{G}^c)\le Ce^{-cn}$,
\eqref{eq:identity} gives the exact relation
\begin{equation}
\sqrt{\frac{m^3}{2pn}}
\left(\frac{T_n^2}{n}-\frac pm\right)
=-\frac mR\,\frac{R-m}{\sqrt{2mp/n}},
\label{lt:eq:fixed-T-transform}
\end{equation}
which gives \eqref{eq:T-CLT}. Similarly,
\[
\log(1+T_n^2/n)+\log(m/n)=-\log(R/m),
\]
and Taylor expansion at one proves \eqref{eq:all-dimension-log}.
Since $p/m\to\gamma/(1-\gamma)$, its normalization also gives
\eqref{eq:log-CLT}.
Finally, let
\[
Z_n^T=\sqrt{\frac{m^3}{2pn}}\left(\frac{T_n^2}{n}-\frac pm\right),
\qquad
Z_n^{\mathrm G}=\sqrt{\frac{m^3}{2pn}}\left(G_{p,m}-\frac pm\right).
\]
Thus, we have $Z_n^T\dto N(0,1)$ and $Z_n^{\mathrm G}\dto N(0,1)$. Since both limiting distribution functions are continuous, these two convergences are uniform in the distribution-function argument. Affine invariance of Kolmogorov distance and the triangle inequality therefore give
\[
\varepsilon_{n,p}=d_K\big(Z_n^T,Z_n^{\mathrm G}\big)
\le d_K\big(Z_n^T,N(0,1)\big)+d_K\big(Z_n^{\mathrm G},N(0,1)\big)\to0,
\]
completing the proof of Theorem~\ref{thm:weak-five-halves}.

\appendix

\section{Auxiliary proofs}\label{app:lemma-proofs}

\subsection{Proof of Lemma~\ref{lem:rank}}\label{app:lem:rank}
\begin{proof}
Let $\mathcal S\subset\mathbb R^n$ be a fixed $d$-dimensional subspace, where $0\le d<n$, and let $\mathbf z=(z_1,\ldots,z_n)^\top$ have independent coordinates with law $\xi$. We first show that
\begin{equation}\label{eq:subspace-atom-bound}
\mathbb P(\mathbf z\in\mathcal S)\le q^{n-d}.
\end{equation}
For $d=0$, this follows directly from independence. For $d\ge1$, form a $d\times n$ matrix from a basis of $\mathcal S$. A nonsingular $d\times d$ minor determines a set $J$ of $d$ coordinate positions on which the coordinate projection is one-to-one. Consequently, for each $k\notin J$, there is a linear function $f_k$ such that $x_k=f_k((x_j)_{j\in J})$ for every $\mathbf x\in\mathcal S$. Conditioning on $(z_j)_{j\in J}$ and using independence of the remaining coordinates gives
\[
\mathbb P\big(\mathbf z\in\mathcal S\mid(z_j)_{j\in J}\big)
\le\prod_{k\notin J}\sup_{x\in\mathbb R}\mathbb P(z_k=x)
\le q^{n-d}.
\]
Taking expectations gives \eqref{eq:subspace-atom-bound}. The same estimate applies conditionally to a random subspace independent of $\mathbf z$.

Let $\mathbf r_1^\top,\ldots,\mathbf r_p^\top$ be the rows of $\mathbf X$, and set
\[
\mathcal S_{i-1}=\operatorname{span}\{\mathbf1^\top,\mathbf r_1^\top,\ldots,\mathbf r_{i-1}^\top\},
\qquad \mathscr H_{i-1}=\sigma(\mathbf r_1,\ldots,\mathbf r_{i-1}),
\quad 1\le i\le p.
\]
The events $E_i=\{\dim\mathcal S_{i-1}=i,\ \mathbf r_i^\top\in\mathcal S_{i-1}\}$ describe the first rank failure and are mutually disjoint, with $\mathcal G^c=\bigcup_{i=1}^pE_i$. Since $\mathbf r_i$ is independent of $\mathscr H_{i-1}$, applying \eqref{eq:subspace-atom-bound} conditionally gives
\[
\mathbb P(E_i)=\mathbb E\left[
\mathbbm1_{\{\dim\mathcal S_{i-1}=i\}}
\mathbb P(\mathbf r_i^\top\in\mathcal S_{i-1}\mid\mathscr H_{i-1})\right]
\le q^{n-i}.
\]
Therefore,
\[
\eta_n=\sum_{i=1}^p\mathbb P(E_i)
\le\sum_{i=1}^p q^{n-i}
=q^{n-p}\frac{1-q^p}{1-q}
\le\frac{q^{n-p}}{1-q}.
\]
When $q=0$, the same argument gives $\eta_n=0$. Finally, $\mathcal G\subseteq\mathcal F$ implies $\mathbb P(\mathcal F^c)\le\eta_n$.
\end{proof}

\subsection{Proof of Lemma~\ref{lem:sign}}\label{app:prf_sign}
\begin{proof}
	We first consider \eqref{eq:sign-bound}. Let $\mathbf{g}=(g_1,\ldots,g_n)^\top$ have independent standard normal coordinates, independently of $\boldsymbol{\varepsilon}$. For $0\le j\le n$, define the intermediate vectors
	\[
	\mathbf{x}^{(j)}=(g_1,\ldots,g_j,\varepsilon_{j+1},\ldots,\varepsilon_n)^\top,
	\qquad
	Q_j=\frac{(\mathbf{x}^{(j)})^\top\mathbf{D}\mathbf{x}^{(j)}}s.
	\]
	Thus $Q_0=\boldsymbol{\varepsilon}^\top\mathbf{D}\boldsymbol{\varepsilon}/s$ and $Q_n=\mathbf{g}^\top\mathbf{D}\mathbf{g}/s$. In the $j$-th replacement, all coordinates except the $j$-th one agree. Denoting their common values by $x_k^{[j]}$, set
	\[
	U_j=\sum_{\substack{1\le k,l\le n\\k\ne j,\ l\ne j}}
	D_{kl}x_k^{[j]}x_l^{[j]},
	\qquad
	V_j=\sum_{\substack{1\le k\le n\\k\ne j}}D_{jk}x_k^{[j]}.
	\]
	Since $D_{jj}=0$,
	\[
	Q_{j-1}=(U_j+2\varepsilon_jV_j)/s,
	\qquad Q_j=(U_j+2g_jV_j)/s.
	\]
	The pair $(U_j,V_j)$ is independent of both $\varepsilon_j$ and $g_j$.
	
	For $f\in C_b^3(\mathbb{R})$, the class of bounded three times continuously differentiable functions with bounded derivatives up to order three, Taylor's formula gives
	\[
	f(u+t)=f(u)+tf'(u)+\frac{t^2}{2}f''(u)
	+\frac{t^3}{2}\int_0^1(1-v)^2f^{(3)}(u+vt)\mathrm{d}v.
	\]
	The last term has absolute value at most $\|f^{(3)}\|_\infty|t|^3/6$. Conditional on the common coordinates, applying this formula at $u=U_j/s$ with $t=2\varepsilon_jV_j/s$ and $t=2g_jV_j/s$ makes the linear and quadratic terms cancel after expectation: $\mathbb{E}\varepsilon_j=\mathbb{E}g_j=0$ and $\mathbb{E}\varepsilon_j^2=\mathbb{E}g_j^2=1$. Consequently,
	\[
	\big|\mathbb{E}f(Q_{j-1})-\mathbb{E}f(Q_j)\big|
	\le C\|f^{(3)}\|_\infty s^{-3}\mathbb{E}|V_j|^3.
	\]
	All coordinates appearing in $V_j$ are independent, centered, have variance one, and have fourth moment at most three. Setting
	$d_j^2=\sum_{k=1}^nD_{jk}^2$, expansion of the fourth moment gives
	\begin{align*}
	\mathbb{E}V_j^4
	&=\sum_{\substack{1\le k\le n\\k\ne j}}
	D_{jk}^4\mathbb{E}(x_k^{[j]})^4
	+6\sum_{\substack{1\le k<l\le n\\k,l\ne j}}D_{jk}^2D_{jl}^2
	\le3d_j^4,
	\end{align*}
	which gives $\mathbb{E}|V_j|^3\le3^{3/4}d_j^3$. Since $d_j=\|\mathbf{D}\mathbf{e}_j\|\le\|\mathbf{D}\|$ and $\sum_{j=1}^nd_j^2=\tr\mathbf{D}^2=s^2/2$, it follows that
	\begin{equation}\label{eq:sign-smooth-replacement}
	\big|\mathbb{E}f(Q_0)-\mathbb{E}f(Q_n)\big|\le \sum_{j=1}^n\big|\mathbb{E}f(Q_{j-1})-\mathbb{E}f(Q_j)\big|
	\le C\|f^{(3)}\|_\infty s^{-3}\sum_{j=1}^n d_j^3
	\le C\|f^{(3)}\|_\infty\frac{\|\mathbf{D}\|}{s}.
	\end{equation}
	Let $\lambda_1,\ldots,\lambda_n$ be the eigenvalues of $\mathbf{D}$. Orthogonal invariance of the standard Gaussian vector and $\tr\mathbf{D}=0$ give
	\[
	Q_n\stackrel d=\frac1s\sum_{j=1}^n\lambda_j(g_j^2-1).
	\]
	The summands are independent and centered, and their variances sum to one. The Berry--Esseen inequality for independent summands consequently gives
	\begin{equation}\label{eq:Gaussian-quadratic-BE}
	d_K\big(Q_n,N(0,1)\big)
	\le C\frac{\sum_{j=1}^n|\lambda_j|^3}{s^3}
	\le C\frac{\|\mathbf{D}\|}{s},
	\end{equation}
	since $\mathbb{E}|g_1^2-1|^3<\infty$ and
	$\sum_{j=1}^n|\lambda_j|^3\le\|\mathbf{D}\|\sum_{j=1}^n\lambda_j^2$.

	To show \eqref{eq:sign-bound}, it suffices to transfer the smooth comparison to distribution functions. Fix a nonnegative $C^\infty$ function $\rho$ supported in $(0,1)$ with integral one, and put $\psi(t)=\int_t^\infty\rho(u)\mathrm{d}u$. Then $\psi$ is nonincreasing, equals one on $(-\infty,0]$, and equals zero on $[1,\infty)$. For $x\in\mathbb{R}$ and $h>0$, define
	\[
	f_{x,h}(t)=\psi\big((t-x)/h\big).
	\]
	It satisfies
	\[
	\mathbbm{1}_{(-\infty,x]}(t)\le f_{x,h}(t)
	\le\mathbbm{1}_{(-\infty,x+h]}(t),
	\qquad \|f_{x,h}^{(3)}\|_\infty\le Ch^{-3}.
	\]
	Setting $a=\|\mathbf{D}\|/s$, applying \eqref{eq:sign-smooth-replacement} to $f_{x,h}$ and $f_{x-h,h}$, and then using \eqref{eq:Gaussian-quadratic-BE}, gives
	\begin{align*}
	\mathbb{P}(Q_0\le x)
	&\le\mathbb{E}f_{x,h}(Q_0)
	\le\mathbb{E}f_{x,h}(Q_n)+Cah^{-3}
	\le\Phi(x+h)+Ca+Cah^{-3},\\
	\mathbb{P}(Q_0\le x)
	&\ge\mathbb{E}f_{x-h,h}(Q_0)
	\ge\mathbb{E}f_{x-h,h}(Q_n)-Cah^{-3}
	\ge\Phi(x-h)-Ca-Cah^{-3}.
	\end{align*}
	Since the standard normal density is bounded, the inequalities above imply
	\[
	d_K\big(Q_0,N(0,1)\big)\le C\big(ah^{-3}+a+h\big).
	\]
	Moreover $0<a\le1/\sqrt2$, because $s^2=2\|\mathbf{D}\|_F^2$. Choosing $h=a^{1/4}$  yields \eqref{eq:sign-bound}.
	
	For \eqref{eq:sign-fourth}, let $\mathbf{A}=(|D_{jk}|)_{1\le j,k\le n}$. Expanding the fourth power and taking absolute values term by term give
	\[
	\mathbb{E}(\boldsymbol{\varepsilon}^\top\mathbf{D}\boldsymbol{\varepsilon})^4
	\le\mathbb{E}(\mathbf{g}^\top\mathbf{A}\mathbf{g})^4.
	\]
	The matrix $\mathbf{A}$ is symmetric and satisfies $\operatorname{tr}\mathbf{A}=0$. If $\alpha_1,\ldots,\alpha_n$ are its eigenvalues, then the right-hand side equals the fourth moment of $\sum_{j=1}^n\alpha_j(g_j^2-1)$. Using $\mathbb{E}(g_j^2-1)^2=2$ and $\mathbb{E}(g_j^2-1)^4=60$ gives
	\begin{align*}
	\mathbb{E}(\mathbf{g}^\top\mathbf{A}\mathbf{g})^4
	=60\sum_{j=1}^n\alpha_j^4
	+24\sum_{1\le j<k\le n}\alpha_j^2\alpha_k^2
	=12(\tr\mathbf{A}^2)^2+48\tr\mathbf{A}^4
	\le60(\tr\mathbf{D}^2)^2,
	\end{align*}
	where we used $\tr\mathbf{A}^2=\tr\mathbf{D}^2$ and
	$\tr\mathbf{A}^4\le(\tr\mathbf{A}^2)^2$. This completes the proof of \eqref{eq:sign-fourth}.
	
	For the fixed-rank comparison \eqref{eq:sign-fixed}, define $\mathbf{u}_j=\mathbf{U}^\top\mathbf{e}_j\in\mathbb{R}^k$, for $1\le j\le n$. Then
	\[
	\sum_{j=1}^n\mathbf{u}_j\mathbf{u}_j^\top=\mathbf{I}_k,
	\qquad \sum_{j=1}^n\|\mathbf{u}_j\|^2=k,
	\qquad \max_{1\le j\le n}\|\mathbf{u}_j\|^2=b.
	\]
	In the $j$-th replacement, let
	$\mathbf{S}_j=\sum_{l=1}^{j-1}g_l\mathbf{u}_l+
	\sum_{l=j+1}^n\varepsilon_l\mathbf{u}_l$.
	Applying the same conditional Taylor expansion to $t\mapsto f(\mathbf{S}_j+t\mathbf{u}_j)$ cancels the linear and quadratic terms after expectation and bounds the replacement error by $C\|\mathrm{D}^3f\|_\infty\|\mathbf{u}_j\|^3$, where $\|\mathrm{D}^3f\|_\infty$ is the supremum of the operator norm of the third derivative.
	Using $\sum_{j=1}^n\|\mathbf{u}_j\|^3\le k\sqrt b$ therefore gives
	\begin{equation}\label{eq:fixed-rank-smooth}
	\left|\mathbb{E}f(\mathbf{U}^\top\boldsymbol{\varepsilon})
	-\mathbb{E}f(\mathbf{G}_k)\right|
	\le Ck\sqrt b\,\|\mathrm{D}^3f\|_\infty,
	\qquad \mathbf{G}_k\sim N_k(\mathbf{0},\mathbf{I}_k).
	\end{equation}
	To approximate the distribution functions of the norms, let $B_r$ be the closed Euclidean ball of radius $r$ in $\mathbb{R}^k$, with $B_r=\varnothing$ for $r<0$. Choose a nonnegative smooth probability density $\rho$ supported in the unit ball, let $\rho_h(\mathbf{x})=h^{-k}\rho(\mathbf{x}/h)$, and define $f_{r,h}=\mathbbm{1}_{B_r}*\rho_h$. Then
	\[
	\mathbbm{1}_{B_{r-h}}\le f_{r,h}\le\mathbbm{1}_{B_{r+h}},
	\qquad \|\mathrm{D}^3f_{r,h}\|_\infty\le \int_{\mathbb{R}^k}|\mathrm{D}^3\rho_h(u)|\mathrm{d}u\le h^{-3}\int_{\mathbb{R}^k}|\mathrm{D}^3\rho(u)|\mathrm{d}u\le C_kh^{-3}
	\]
	by Young's inequality.
	Let $F_S(r)=\mathbb{P}(\|\mathbf{U}^\top\boldsymbol{\varepsilon}\|\le r)$ and $F_G(r)=\mathbb{P}(\|\mathbf{G}_k\|\le r)$, both set to zero for $r<0$. Applying \eqref{eq:fixed-rank-smooth} to the upper approximation $f_{r+h,h}$ and the lower approximation $f_{r-h,h}$ gives, for $r\ge0$,
	\[
	F_G(r-2h)-C_k\sqrt b\,h^{-3}
	\le F_S(r)\le F_G(r+2h)+C_k\sqrt b\,h^{-3}.
	\]
	When $r<h$, the lower bound follows directly from $F_S(r)\ge0$. The density of $\|\mathbf{G}_k\|$ is proportional to $r^{k-1}e^{-r^2/2}$ on $[0,\infty)$ and is bounded for each fixed $k\ge1$. Thus
	\[
	\sup_{r\ge0}|F_S(r)-F_G(r)|
	\le C_k\big(\sqrt b\,h^{-3}+h\big).
	\]
	In view of the fact that $\|\mathbf{G}_k\|^2\sim\chi_k^2$, choosing $h=b^{1/8}$ and replacing $r$ by $\sqrt x$ gives \eqref{eq:sign-fixed}, finishing the proof of the lemma.
\end{proof}

\subsection{Proof of Proposition~\ref{thm:comparison}}\label{app:prf_proposition}
\begin{proof}
	On the event $\mathcal{F}$, set $\mathbf{D}=\mathbf{P}-\diag \mathbf{P}$. It follows that
	\[
	\boldsymbol{\varepsilon}^\top \mathbf{P}\boldsymbol{\varepsilon}=\boldsymbol{\varepsilon}^\top \mathbf{D}\boldsymbol{\varepsilon}+\operatorname{tr}\mathbf{P}=\boldsymbol{\varepsilon}^\top \mathbf{D}\boldsymbol{\varepsilon}+m.
	\]
	By independence of the signs, $\mathbb{E}_{\boldsymbol{\varepsilon}}(\boldsymbol{\varepsilon}^\top \mathbf{D}\boldsymbol{\varepsilon})=0$ and
	\begin{equation}
	\Var_{\boldsymbol{\varepsilon}}(\boldsymbol{\varepsilon}^\top \mathbf{D}\boldsymbol{\varepsilon})
	=2\sum_{\substack{1\le j,k\le n\\j\ne k}}P_{jk}^2
	=2(m-\sum_{j=1}^nP_{jj}^2)
	=2v(1-\delta_{n,p}),
	\end{equation}
	where we used $\tr \mathbf{P}^2=\tr \mathbf{P}=m$ and the definition of $\delta_{n,p}$ in \eqref{eq:delta}.

	Since $\operatorname{tr}\mathbf{D}^2=v(1-\delta_{n,p})$ and $\|\mathbf{D}\|\le2$, if $\delta_{n,p}\le1/2$ we have
	$s=\sqrt{2\operatorname{tr}\mathbf{D}^2}\ge\sqrt{v}$, which together with Lemma~\ref{lem:sign} implies
	\[
	d_K(\boldsymbol{\varepsilon}^\top \mathbf{D}\boldsymbol{\varepsilon}/s,N(0,1))
	\le C(\|\mathbf{D}\|/s)^{1/4}\le Cv^{-1/8}.
	\]
	For simplicity, set $\sigma=\sqrt{1-\delta_{n,p}}$. If \(\delta_{n,p}\le1/2\), we have
	\(1/\sqrt2\le\sigma\le1\), and can write
	\[
	\frac{R^{\rm s}-m}{\sqrt{2v}}
	=
	\sigma\,
	\frac{\boldsymbol{\varepsilon}^\top\mathbf D\boldsymbol{\varepsilon}}{s}.
	\]
	For every \(x\in\mathbb R\),
	\begin{align*}
	\left|
	\mathbb P_{\boldsymbol{\varepsilon}}\left(
	\frac{R^{\rm s}-m}{\sqrt{2v}}\le x
	\,\middle|\,\mathbf X
	\right)-\Phi(x)
	\right|
	\le
	\left|
	\mathbb P_{\boldsymbol{\varepsilon}}\left(
	\frac{\boldsymbol{\varepsilon}^\top\mathbf D\boldsymbol{\varepsilon}}{s}
	\le \frac{x}{\sigma}
	\,\middle|\,\mathbf X
	\right)
	-\Phi\left(\frac{x}{\sigma}\right)
	\right|
	+
	\left|
	\Phi\left(\frac{x}{\sigma}\right)-\Phi(x)
	\right|.
	\end{align*}
	The first term is bounded by \(Cv^{-1/8}\). For the second one, consider
	\(g_x(u)=\Phi(x/u)\), \(u\in[1/\sqrt2,1]\). Since
	\[
	|g_x'(u)|
	=
	\frac{|x|}{u^2}\phi(x/u)
	=
	\frac1u\left|\frac xu\right|\phi(x/u)
	\le C,
	\]
	uniformly in \(x\in\mathbb R\) and \(u\in[1/\sqrt2,1]\), the mean-value theorem gives
	\[
	\sup_{x\in\mathbb R}
	\left|
	\Phi\left(\frac{x}{\sigma}\right)-\Phi(x)
	\right|
	\le C(1-\sigma)= C\frac{\delta_{n,p}}
	{1+\sqrt{1-\delta_{n,p}}}
	\le C\delta_{n,p}.
	\]
	Consequently, on
	\(\mathcal F\cap\{\delta_{n,p}\le1/2\}\), we have
	\begin{equation}\label{eq:conditional-sign-comparison}
	\sup_{x\in\mathbb R}
	\left|
	\mathbb P_{\boldsymbol{\varepsilon}}\left(
	\frac{R^{\rm s}-m}{\sqrt{2v}}\le x
	\,\middle|\,\mathbf X
	\right)-\Phi(x)
	\right|
	\le
	C\big(v^{-1/8}+\delta_{n,p}\big).
	\end{equation}
	On the event
	\(\mathcal F\cap\{\delta_{n,p}>1/2\}\), we use the trivial bound by one. Since
	\[
	\mathbb P\big(
	\mathcal F\cap\{\delta_{n,p}>1/2\}
	\big)
	\le
	2\mathbb E\delta_{n,p}
	\]
	by Markov's inequality, averaging the conditional estimate
	\eqref{eq:conditional-sign-comparison} over \(\mathbf X\), and using
	\(\mathbb P(\mathcal F^c)\le\eta_n\), gives
	\begin{equation}\label{eq:standard-sign-comparison}
	d_K\left(\frac{R^{\rm s}-m}{\sqrt{2v}},N(0,1)\right)
	\le C\big(v^{-1/8}+\mathbb{E}\delta_{n,p}+\eta_n\big).
	\end{equation}
	For Gaussian entries, \eqref{eq:Gaussian-moments} gives $\mathbb{E}\delta_{n,p}^{\rm G}=2/(n+2)$. Applying \eqref{eq:standard-sign-comparison} to that projection and using the triangle inequality gives
	\begin{equation}
	d_K(R^{\rm s},R^{\rm G})
	\le C\big(v^{-1/8}+\mathbb{E}\delta_{n,p}+n^{-1}+\eta_n\big).
	\end{equation}
	For symmetric entries this is also the bound for $d_K(R,R^{\rm G})$ since $R\stackrel d=R^{\rm s}$.
	Notice that the map $r\mapsto n/r-1$ is strictly decreasing on $(0,n]$. The distribution functions after this transformation are complements of left limits before it, and thus the Kolmogorov distance is unchanged. Extending the map at zero and then using the convention $T_n^2=0$ on $\mathcal{G}^c$ changes the distribution by at most $\eta_n$. This establishes \eqref{eq:comparison} as desired.
	
	For \eqref{eq:fixed-dimension-bound}, if $p=k$ is fixed, choose an orthonormal basis $\mathbf{U}$ of the row space on $\mathcal{F}$. Then $\mathbf{L}=\mathbf{U}\mathbf{U}^\top$ and $\max_{1\le j\le n}\|\mathbf{U}^\top \mathbf{e}_j\|^2=\max_{1\le j\le n}L_{jj}$. It follows that $\|\mathbf{U}^\top \boldsymbol{\varepsilon}\|^2=\boldsymbol{\varepsilon}^{\top}\mathbf{L}\boldsymbol{\varepsilon}=n-R^{\rm s}$.
	Equation \eqref{eq:sign-fixed} in Lemma~\ref{lem:sign} gives
	\[
	d_K(n-R^{\rm s},\chi_k^2)
	\le C_k\mathbb{E}[\mathbbm{1}_{\mathcal{F}}(\max_{1\le j\le n}L_{jj})^{1/8}]+\eta_n.
	\]
	For the Gaussian projection, we can write $n-R^{\rm G}=B_k/D_n$ with $D_n=(A_{n-k}+B_k)/n$. Since $\mathbb{E}(D_n-1)^2=2/n$, the event $|D_n-1|\le n^{-1/4}$ has probability at least $1-2n^{-1/2}$. On $|D_n-1|\le n^{-1/4}$, we have
	\[
	\frac{B_k}{1+n^{-1/4}}\le n-R^{\rm G}
	\le\frac{B_k}{1-n^{-1/4}}.
	\]
	The density $f_k$ of $\chi_k^2$ satisfies $\sup_{x>0}xf_k(x)<\infty$. Integrating in $\log x$ bounds both resulting distribution-function changes by $C_kn^{-1/4}$. Therefore, a triangle inequality and the monotone transformation complete the fixed-dimensional assertion.
	
	For $m=k$ fixed, the same argument uses an orthonormal basis of the null space instead of the row space. Lemma~\ref{lem:sign} gives
	\[
	d_K(R^{\rm s},\chi_k^2)
	\le C_k\mathbb{E}[\mathbbm{1}_{\mathcal{F}}(\max_{1\le j\le n}P_{jj})^{1/8}]+\eta_n.
	\]
	The same chi-square ratio calculation, now with numerator $A_k$, yields
	\[
	d_K(R^{\rm G},\chi_k^2)\le C_kn^{-1/4}.
	\]
	The triangle inequality and the decreasing transformation give \eqref{eq:fixed-nullity-bound}.
	
	For \eqref{eq:all-dimension-geometry}, if $\eta_n\to0$ and $\mathbb{E}\delta_{n,p}\to0$, applying \eqref{eq:comparison} gives convergence whenever $\min(p,m)\to\infty$, since $v\ge\min(p,m)/2$. Along a subsequence with $p=k$ fixed, $\sum_{j=1}^nL_{jj}^2=v\delta_{n,p}+p^2/n$ on $\mathcal{F}$, so the fixed-rank comparison applies. Along one with $m=k$ fixed, use instead $\sum_{j=1}^nP_{jj}^2=v\delta_{n,p}+m^2/n$ and \eqref{eq:fixed-nullity-bound}. Every subsequence has a further subsequence of one of these types, giving sufficiency for arbitrary dimension sequences.
	Finally, for necessity, the singular-event convention implies
	\[
	\eta_n\le\mathbb{P}(T_n^2=0)\le \varepsilon_{n,p},
	\]
	since $G_{p,m}>0$ almost surely. Thus $\eta_n\to0$. Define
	\[
	R^\sharp=\mathbbm{1}_{\mathcal{F}}R+\mathbbm{1}_{\mathcal{F}^c}m,
	\qquad Z^\sharp=(R^\sharp-m)/\sqrt{2v}.
	\]
	The event $\mathcal{F}$ is invariant under column signs. Conditional sign replacement and Lemma~\ref{lem:sign} therefore give
	\[
	\mathbb{E}|Z^\sharp|^4\le C,
	\qquad \mathbb{E}(Z^\sharp)^2=\mathbb{P}(\mathcal{F})-\mathbb{E}\delta_{n,p}.
	\]
	For the fourth moment, we use either $\mathbf{D}=\mathbf{P}-\diag \mathbf{P}$ or $\mathbf{D}=-(\mathbf{L}-\diag \mathbf{L})$ on $\mathcal{F}$. Its squared Frobenius norm is at most $\min(p,m)$, while $v\ge\min(p,m)/2$. The same bound holds for $Z^{\rm G}=(R^{\rm G}-m)/\sqrt{2v}$, whose second moment is $n/(n+2)$. The common monotone transformation and the rank correction give $d_K(Z^\sharp,Z^{\rm G})\to0$. For fixed $K$, the difference of the expectations of $x^2\wedge K^2$ is at most $2K^2$ times this distance. Both tail errors are $\mathrm{O}(K^{-2})$ by the fourth-moment bounds. Letting $n\to\infty$ first and then $K\to\infty$ gives $\mathbb{E}\delta_{n,p}\to0$, completing the proof of the proposition.
\end{proof}

\subsection{Proof of Lemma~\ref{lem:five-halves-truncation}}\label{app:prf_truncation}
\begin{proof}
In view of \eqref{eq:weak-five-halves} and $\mathbb{E}\xi=0$, for $0<s<5/2$, tail integration gives
\[
 \mathbb{E}[|\xi|^s\mathbbm{1}_{\{|\xi|>t\}}]
 =t^s\mathbb{P}(|\xi|>t)
   +s\int_t^\infty x^{s-1}\mathbb{P}(|\xi|>x)\,\mathrm dx
 =o(t^{s-5/2}),
\]
and for $s>5/2$, truncating the analogous integral at $t$ gives $\mathbb{E}[|\xi|^s\mathbbm{1}_{\{|\xi|\le t\}}]=o(t^{s-5/2})$. Thus, \eqref{lt:eq:fixed-cutoff-moments} follows by setting $t=n^{2/3}$.

Since $\mathbb{E}\widehat\xi_n^2\to1$ by dominated convergence,
$\mu_n\to0$, and $\epsilon_n\to0$, the definition of $s_n^2$ gives
$s_n\to1$. The independent smoothing makes
$\zeta_n$ non-atomic, and centering and rescaling give mean zero
and variance one. Since $s_n$ stays bounded away from zero, we get \eqref{eq:five-halves-truncated-moments}.
Moreover, for fixed constants $c,C>0$ and
large $x$, we have, uniformly over $n\ge n_0$,
\[
 \mathbb{P}(|\zeta_n|>x)\le\mathbb{P}(|\xi|>cx-C),
\]
which gives \eqref{lt:eq:fixed-uniform-tail}.

Fix $\epsilon>0$. The uniform tail estimate \eqref{lt:eq:fixed-uniform-tail} gives an $M\ge1$ such
that $\mathbb{P}(|\zeta_n|>x)\le\epsilon x^{-5/2}$ for all $x\ge M$
and all $n$. For $T\ge M$, tail integration gives
\[
\mathbb{E}[|\zeta_n|^3\mathbbm{1}_{\{|\zeta_n|\le T\}}]
\le M\mathbb{E}\zeta_n^2
+M^3\mathbb{P}(|\zeta_n|>M)
+3\int_M^T x^2\mathbb{P}(|\zeta_n|>x)\,\mathrm dx
\le M+C\epsilon T^{1/2},
\]
and
\[
\mathbb{E}[|\zeta_n|\mathbbm{1}_{\{|\zeta_n|>T\}}]
=T\mathbb{P}(|\zeta_n|>T)
+\int_T^\infty\mathbb{P}(|\zeta_n|>x)\,\mathrm dx
\le C\epsilon T^{-3/2}.
\]
Since the laws are centered, with $T=s^{-1/2}\ge M$,
\begin{align*}
\left|\mathbb{E}\{\zeta_n e^{-s\zeta_n^2}\}\right|
&=\left|\mathbb{E}\{\zeta_n (1-e^{-s\zeta_n^2})\}\right|\le\mathbb{E}\{|\zeta_n|(1-e^{-s\zeta_n^2})\}\\
&\le s\mathbb{E}[|\zeta_n|^3\mathbbm{1}_{\{|\zeta_n|\le T\}}]
+\mathbb{E}[|\zeta_n|\mathbbm{1}_{\{|\zeta_n|>T\}}]\\
&\le Ms+C\epsilon s^{3/4}.
\end{align*}
For each fixed $\epsilon>0$, the corresponding $M$ is fixed.
Dividing by $s^{3/4}$ and letting $s\rightarrow 0$ therefore gives
\[
 \limsup_{s\rightarrow 0}\sup_n
 s^{-3/4}\left|\mathbb{E}\{\zeta_n e^{-s\zeta_n^2}\}\right|
 \le C\epsilon\rightarrow 0
\]
by letting $\epsilon\rightarrow 0$. In particular, one can choose
\[
 \omega(t)=\sup_n\sup_{0<s\le t}
 s^{-3/4}\left|\mathbb{E}\{\zeta_n e^{-s\zeta_n^2}\}\right|,
 \qquad 0<t\le1.
\]
Thus \eqref{sr:eq:signed-laplace-bound} follows.
\end{proof}

\subsection{Proof of Lemma~\ref{sat:lem:truncation}}\label{app:prf:diff_truncation}
\begin{proof}
Denote $q=q_n$ and $\mu=\mu_n$ for simplicity. To compare $\mathbf{X}$ and $\widehat{\mathbf{X}}$, replacing the rows one at a time, we condition at each replacement on the $p-1$ common rows and denote $\mathbf{B}$ as this matrix.
Let $\mathbf{J}=\mathbf{P}(\mathbf{B})$, $\mathbf{t}=\mathbf{J}\mathbf{1}$, and
$d_j=J_{jj}$.  Define $t_j^2/d_j=0$ when $d_j=0$; then $t_j=0$ as
well. Next, we shall show
\begin{equation}\label{sat:eq:directional-leverage}
 \max_{1\le j\le n}\mathbb{E}\frac{t_j^2}{d_j}
 \leq C(1+n\mu^2)+Cne^{-cn}.
\end{equation}
For $d=0$ the ratio equals one. Otherwise, remove the $j$-th column $\mathbf{b}_j$ and write
\[
 \mathbf{M}=(\mathbf{B}_{-j}\mathbf{B}_{-j}^\top)^{-1},\qquad
 \mathbf{u}=\mathbf{M}\mathbf{B}_{-j}\mathbf{1}_{n-1}.
\]
On the event in \eqref{sym:eq:hybrid-smallest} for
$\mathbf{B}_{-j}$, we have $\|\mathbf{M}\|\leq C/n$ and
\[
 \|\mathbf{u}\|^2
 \leq\|\mathbf{M}\|
       \mathbf{1}_{n-1}^\top\mathbf{B}_{-j}^\top
       \mathbf{M}\mathbf{B}_{-j}\mathbf{1}_{n-1}\leq C.
\]
For the independent deleted column $\mathbf{b}_j$, the exact leave-column identities are
\[
 d_j=\frac1{1+\mathbf{b}_j^\top\mathbf{M}\mathbf{b}_j},\qquad
 t_j=d_j(1-\mathbf{b}_j^\top\mathbf{u}).
\]
Conditionally on
$\mathbf{B}_{-j}$, we get
\[
 \mathbb{E}(\mathbf{b}_j^\top\mathbf{u})^2
 \leq \|\mathbf{u}\|^2+
       \|\mathbb{E}\mathbf{b}_j\|^2\|\mathbf{u}\|^2
 \leq C(1+n\mu^2).
\]
which together with $d_j\leq1$ and the deterministic projection inequality
\[
 t_j^2=|\mathbf{e}_j^\top\mathbf{J}\mathbf{1}|^2
 \leq J_{jj}\,\mathbf{1}^\top\mathbf{J}\mathbf{1}\leq nd_j
\]
gives \eqref{sat:eq:directional-leverage}.

Next, we consider the difference of changing one coordinate of one replacing row. Condition on its $p-1$ common rows. Their null-space projection
$\mathbf{J}$ has rank at least $n-p+1$. In view of \eqref{eq:five-halves-projection-recursion}, adding a row $\mathbf{z}^\top$ decreases the directional statistic by
\[
 \mathcal Q(\mathbf{z})
 =\frac{(\mathbf{t}^\top\mathbf{z})^2}
        {\mathbf{z}^\top\mathbf{J}\mathbf{z}},
 \qquad 0\leq\mathcal Q(\mathbf{z})\leq n,
\]
with value zero if $\mathbf{z}^\top\mathbf{J}\mathbf{z}=0$.  Fix coordinate $j$ and set
$x=z_j$.  If $d=J_{jj}=0$, changing $x$ has no effect to $\mathcal Q(\mathbf{z})$. Otherwise, define
\[
 \mathbf{e}=\frac{\mathbf{J}\mathbf{e}_j}{\sqrt d},\qquad
 \mathbf{K}=\mathbf{J}-\mathbf{e}\mathbf{e}^\top,\qquad
 b=\frac{t_j}{\sqrt d}.
\]
The matrix $\mathbf{K}$ is an orthogonal projection of rank at least
$n-p$, and $\mathbf{K}\mathbf{e}_j=0$.  Let $\mathbf{z}^{(j)}$ be
$\mathbf{z}$ with its $j$-th coordinate set to zero, and define
\[
 s=\mathbf{e}^\top\mathbf{z}^{(j)},\quad
 U=\mathbf{1}^\top\mathbf{K}\mathbf{z}^{(j)},\quad
 V_0=(\mathbf{z}^{(j)})^\top\mathbf{K}\mathbf{z}^{(j)}.
\]
By orthogonal decomposition in the direction $\mathbf{e}$, we can represent
\begin{equation}\label{sat:eq:coordinate-decrement}
 \mathcal Q(x)
 =\frac{\{b(\sqrt d\,x+s)+U\}^2}
        {(\sqrt d\,x+s)^2+V_0}.
\end{equation}
For $V_0>0$, the Cauchy--Schwarz inequality gives
\begin{equation}\label{sat:eq:coordinate-saturation}
 0\leq\mathcal Q(x)\leq b^2+\frac{U^2}{V_0}.
\end{equation}
When $V_0=0$, one also has $U=0$, and the same assertion holds if
$U^2/V_0$ is assigned the value zero. It follows that
\begin{equation}\label{sat:eq:one-coordinate-difference}
 |\mathcal Q(x)-\mathcal Q(x\mathbbm{1}_{\{|x|\leq\tau_n\}})|
 \leq\mathbbm{1}_{\{|x|>\tau_n\}}
       \left(b^2+\frac{U^2}{V_0}\right).
\end{equation}
In view of $\mathbf{K}\mathbf{e}_j=0$, applying the projection small-ball estimate to the restriction of $\mathbf{K}$ to $\mathbf{e}_j^\perp$
and the $n-1$ random coordinates, as in Lemma~\ref{lem:lower}, gives
\[
 \mathbb{P}(V_0<cn\mid\mathbf{J})\leq Ce^{-cn}.
\]
On the other hand, we have
\[
 \mathbb{E}(U^2\mid\mathbf{J})
 \leq\|\mathbf{K}\mathbf{1}\|^2+
       \|\mathbb{E}\mathbf{z}^{(j)}\|^2
       \|\mathbf{K}\mathbf{1}\|^2
 \leq n+n^2\mu^2.
\]
since $\|\mathbf{K}\mathbf{1}\|^2\leq n$.
Consequently,
\begin{equation}\label{sat:eq:background-decrement}
 \mathbb{E}\left(\frac{U^2}{V_0}\,\middle|\,\mathbf{J}\right)
 \leq C(1+n\mu^2)+Cne^{-cn},
\end{equation}
where we used the deterministic bound $U^2/V_0\leq\|\mathbf{K}\mathbf{1}\|^2\leq n$.
The event $|x|>\tau_n$ is independent of $\mathbf{J}$ and all other coordinates of the row. Taking expectations in
\eqref{sat:eq:one-coordinate-difference} and using
\eqref{sat:eq:directional-leverage} and
\eqref{sat:eq:background-decrement} bounds the mean change in $R$ for one coordinate replacement by
\[
 Cq(1+n\mu^2)+Cnq e^{-cn},
\]
which gives \eqref{sat:eq:transfer} since there are $pn$ replacements, completing the proof of this lemma.
\end{proof}

\subsection{Proof of Lemma~\ref{sr:lem:row-mean}}\label{app:prf:diff_symmetry}
\begin{proof}
Write $k=k_n$, $\ell=l_n$, $d_j=J_{jj}$, and
\[
 \mathbf{B}=\mathbf{J}-\diag(\mathbf{J}),\qquad
 D=\sum_{j=1}^n d_jz_j^2,\qquad F=1+D,\qquad
 O=\mathbf{z}^\top\mathbf{B}\mathbf{z},
\]
\[
 A=(\mathbf{u}^\top\mathbf{z})^2,\qquad
 V=\mathbf{z}^\top\mathbf{J}\mathbf{z}=D+O.
\]
The same notation with a superscript $\circ$ refers to the
symmetrized row. In the stated coupling, $D^\circ=D$ and
$F^\circ=F$, since $(z_j^\circ)^2=z_j^2$.
Also $0\le d_j\le1$, $\sum_{j=1}^n d_j=r$, $\|\mathbf{B}\|\le2$,
and $\tr\mathbf{B}^2\le n$. The projection property gives
\begin{equation}\label{sr:eq:deterministic-bounds}
 A\le aV,\qquad
 V\le\left(\sum_{j=1}^n \sqrt{d_j}|z_j|\right)^2\le nD\le nF.
\end{equation}
Consequently, $0\le\theta\le a$ and $A=0$ whenever $V=0$.
The exact identity
\begin{equation}\label{sr:eq:ratio-expansion}
 \theta=\frac AF-\frac{AO}{F^2}+\frac A{F^2}
       +\frac{A(O-1)^2}{F^2V}
\end{equation}
holds with its last term set to zero when $V=0$. It also holds for
the symmetrized row.

By Lemma~\ref{lem:lower}, we have
\[
 \mathbb{P}(V<cn)+\mathbb{P}(V^\circ<cn)\le Ce^{-cn}.
\]
There are fixed $b,q>0$ such that
$\mathbb{P}(|\zeta_n|\ge b)\ge q$, uniformly in $n$.
For $0\le s\le1$, define
\begin{equation}\label{sr:eq:phi-bounds}
 \phi_j(s)=\mathbb{E}e^{-s d_jz_j^2},~
 \Phi(s)=\prod_{j=1}^n\phi_j(s)\le e^{-cns}.
\end{equation}
It follows that $e^{-1}\le e^{-sd_j}\le \phi_j(s)\le e^{-csd_j}$ since 
$1-\phi_j(s)\ge q(1-e^{-s d_jb^2})\ge csd_j$ and
$\mathbb{E}z_j^2=1$. Exponential Markov's inequality, applied at
$s=1$, gives, for a sufficiently small fixed $c_0>0$,
\[
 \mathbb{P}(F<c_0n)
 \le e^{c_0n-1}\mathbb{E}e^{-D}
 =e^{c_0n-1}\Phi(1)\le Ce^{-cn}.
\]
These exponential estimates remain valid after deleting at most
four coordinates from $D$, because the remaining diagonal sum is
at least $r-4\ge\delta n-4$. We now estimate the terms of
\eqref{sr:eq:ratio-expansion} separately.

For the first term, write
\[
 A_{\rm diag}=\sum_{j=1}^n u_j^2z_j^2,\qquad
 A_{\rm off}=\sum_{j\ne k}^n u_ju_kz_jz_k.
\]
The diagonal contribution cancels exactly:
\begin{equation}\label{sr:eq:diagonal-exact-cancellation}
 \mathbb{E}\frac{A_{\rm diag}}F
 =\mathbb{E}\frac{A_{\rm diag}^\circ}{F^\circ}.
\end{equation}
Moreover, $\mathbb{E}(A_{\rm off}^\circ/F)=0$ by symmetry.
For $0\le s\le1$, define
\[
 m_j(s)=\frac{\mathbb{E}[z_je^{-sd_jz_j^2}]}{\phi_j(s)},\qquad
 \mathbf{m}(s)=(m_1(s),\ldots,m_n(s))^\top.
\]
Independence gives the exact factorization
\[
 \mathbb{E}[A_{\rm off}e^{-sD}]
 =\Phi(s)\left\{(\mathbf{u}^\top\mathbf{m}(s))^2
                 -\sum_{j=1}^n u_j^2m_j(s)^2\right\}.
\]
By \eqref{sr:eq:phi-bounds} and
\eqref{sr:eq:signed-laplace-bound}, applied with argument $sd_j$, one gets
\begin{equation}\label{sr:eq:damped-signed-means}
 |m_j(s)|\le C\omega(s)s^{3/4}d_j^{3/4},\qquad 0\le s\le1.
\end{equation}
Since $\|\mathbf{u}\|^2=a$ and $0\le d_j\le1$,
\[
 \left|(\mathbf{u}^\top\mathbf{m}(s))^2
              -\sum_{j=1}^n u_j^2m_j(s)^2\right|
 \le2a\|\mathbf{m}(s)\|^2
 \le Ca n\omega(s)^2s^{3/2}.
\]
Using $F^{-1}=\int_0^\infty e^{-sF}\,\mathrm{d}s$, we obtain
\begin{align}
 \left|\int_0^1e^{-s}\mathbb{E}[A_{\rm off}e^{-sD}]
                    \,\mathrm{d}s\right|
 &\le Ca n\int_0^1\omega(s)^2s^{3/2}e^{-cns}\,\mathrm{d}s\notag\\
 &=\frac{Ca}{n^{3/2}}
   \int_0^n\omega(h/n)^2h^{3/2}e^{-ch}\,\mathrm{d}h
 =o(a n^{-3/2}),
 \label{sr:eq:principal-odd-term}
\end{align}
where the last integral tends to zero by dominated convergence:
$\omega(h/n)\to0$ for each fixed $h$, $\omega$ is bounded,
and $h^{3/2}e^{-ch}$ is integrable. 

For $s\ge1$, independence gives, for every $j\ne k$,
\begin{align*}
 \left|\mathbb{E}[z_jz_ke^{-sF}]\right|
 =e^{-s}\left|
      \mathbb{E}[z_je^{-sd_jz_j^2}]
      \mathbb{E}[z_ke^{-sd_kz_k^2}]
      \prod_{h\ne j,k}^n\phi_h(s)\right|\le e^{-s}\prod_{h\ne j,k}^n\phi_h(1)
 \le Ce^{-s-cn},
\end{align*}
where we used $\mathbb{E}|z_j|\le1$ and
$\phi_h(s)\le\phi_h(1)\le e^{-cd_h}$. Then the Cauchy--Schwarz inequality gives
$\sum_{j\ne k}^n |u_j||u_k|\le(\sum_{j=1}^n |u_j|)^2\le na$, which further gives
\[
 \int_1^\infty\left|\mathbb{E}[A_{\rm off}e^{-sF}]\right|\,\mathrm{d}s\le \int_1^\infty Ca n e^{-s-cn}\,\mathrm{d}s\le Ca n e^{-cn}.
\]
Together with \eqref{sr:eq:diagonal-exact-cancellation}, this yields
\[
 \left|\mathbb{E}\frac AF
          -\mathbb{E}\frac{A^\circ}{F^\circ}\right|
 =o(a n^{-3/2})+O(a n e^{-cn}).
\]

For the second term in \eqref{sr:eq:ratio-expansion}, we shall show 
\begin{equation}\label{sr:eq:AO-precise}
 \left|\mathbb{E}\frac{AO}{F^2}\right|
 \le Ca\{n^{-2}+\ell^2n^{-5/2}
                  +\ell^2n^{-3}+\ell^4n^{-4}\}+Ca e^{-cn}.
\end{equation}
For $0\le s\le1$, define
\[
 \sigma_j^2(s)=
 \frac{\mathbb{E}[(z_j-m_j(s))^2e^{-sd_jz_j^2}]}{\phi_j(s)},
 \qquad
 \kappa_j(s)=
 \frac{\mathbb{E}[(z_j-m_j(s))^3e^{-sd_jz_j^2}]}{\phi_j(s)}.
\]
The elementary centering estimate and \eqref{sr:eq:phi-bounds} give
\[
 |m_j(s)|\le Csd_j\ell,\qquad
 \sigma_j^2(s)\le C,\qquad
 |\kappa_j(s)|\le C\ell,\qquad
 \|\mathbf{m}(s)\|\le Cs\ell\sqrt n.
\]
where we used 
$\mathbb{E}z_j=0$, $1-e^{-x}\le x$, and $|m_j(s)|\le C$. Set $\boldsymbol{\chi}=(\chi_1,\ldots,\chi_n)^{\top}$ with 
$\chi_j=z_j-m_j(s)$, and note that
$\mathbb{E}[\chi_je^{-sd_jz_j^2}]=0$.
Independence gives, for each tuple of nonnegative integers
$(q_1,\ldots,q_n)$,
\[
 \frac{\mathbb{E}[\prod_{j=1}^n  \chi_j^{q_j}e^{-sD}]}{\Phi(s)}
 =\prod_{j=1}^n 
   \frac{\mathbb{E}[\chi_j^{q_j}e^{-sd_jz_j^2}]}{\phi_j(s)}
\]
whenever the displayed moments are finite. Thus any factor occurring to its first power contributes zero. Set
\[
 \lambda=\mathbf{u}^\top\mathbf{m},\qquad
 \mathbf{b}=\mathbf{B}\mathbf{m},\qquad
 c_0=\mathbf{m}^\top\mathbf{B}\mathbf{m}.
\]
Expanding
$A=(\lambda+\mathbf{u}^\top\boldsymbol{\chi})^2$ and
$O=c_0+2\mathbf{b}^\top\boldsymbol{\chi}
       +\boldsymbol{\chi}^\top\mathbf{B}\boldsymbol{\chi}$,
and using $B_{jj}=0$, gives 
\begin{align}
 \frac{\mathbb{E}[AOe^{-sD}]}{\Phi(s)}
 =c_0\left(\lambda^2+\sum_{j=1}^n u_j^2\sigma_j^2\right)
   +4\lambda\sum_{j=1}^n u_jb_j\sigma_j^2+2\sum_{j=1}^n u_j^2b_j\kappa_j
   +2\sum_{j\ne k}^n u_ju_kB_{jk}\sigma_j^2\sigma_k^2.
 \label{sr:eq:tilted-AO-identity}
\end{align}
The first two terms are bounded by
$Ca(\|\mathbf{m}\|^2+\|\mathbf{m}\|^4)$.
Moreover, the third term is bounded by
$Ca\ell\|\mathbf{B}\mathbf{m}\|_\infty
 \le Ca s\ell^2\sqrt n$ and the last term satisfies
$2|\mathbf{u}^\top\diag(\sigma_j^2)
   \mathbf{B}\diag(\sigma_j^2)\mathbf{u}|\le C\|\mathbf{u}\|^2=Ca$. Therefore, we obtain
\[
 |\mathbb{E}[AOe^{-sD}]|
 \le Ca\Phi(s)
       \{1+s\ell^2\sqrt n+s^2\ell^2n+s^4\ell^4n^2\}.
\]
Integrating against $se^{-s}$ over $[0,1]$, and using
$\int_0^1s^je^{-cns}\,\mathrm{d}s\le C_jn^{-j-1}$,
gives the polynomial terms on the right-hand side of
\eqref{sr:eq:AO-precise}.

For $s\ge1$, expand $AO$ into monomials of total degree four,
each involving at most four coordinates. Firstly, note that the Laplace representation
$F^{-2}=\int_0^\infty se^{-sF}\,\mathrm{d}s$ is legitimate
here because $|O|\le(n+1)F$ and
$|AO|/F^2\le an(n+1)$ by \eqref{sr:eq:deterministic-bounds}.
The Laplace factors from
the other coordinates give $Ce^{-cn}$ by \eqref{sr:eq:phi-bounds}. If a monomial has
exponents $q_j$ summing to four, applying H\"older's inequality gives
\[
 \prod_{j:q_j>0}^n\mathbb{E}|z_j|^{q_j}
 \le\prod_{j:q_j>0}^n k^{q_j/4}=k.
\]
The sum of the absolute coefficients is bounded by
\[
 (\sum_{j=1}^n |u_j|)^2\sum_{j,k}^n|B_{jk}|
 \le na\,n\|\mathbf{B}\|_{\mathrm{F}}
 \le Ca n^{5/2}.
\]
It follows that the integral over $[1,\infty)$ is at most
$Ca n^{5/2}k e^{-cn}\int_1^\infty se^{-s}\,\mathrm{d}s \le Cae^{-cn}$ since $k=o(n)$, which gives  \eqref{sr:eq:AO-precise}.
The symmetrized law has the same $k$ and $\ell$ and satisfies the
same estimate. 

For the last two terms of \eqref{sr:eq:ratio-expansion},
$\mathbb{E}A=a$, the lower bound for $F$, and
\eqref{sr:eq:deterministic-bounds} give
\begin{equation}\label{sr:eq:additional-regularization-term}
 \mathbb{E}\frac A{F^2}\le Ca n^{-2}+Ca n e^{-cn}.
\end{equation}
Setting $\mu_3=\mathbb{E}\zeta_n^3$, a straightforward expansion yields 
\begin{align}
 \mathbb{E}(AO^2)
 =2a\tr\mathbf{B}^2+8\mathbf{u}^\top\mathbf{B}^2\mathbf{u}
   +4(k-3)\sum_{j=1}^n u_j^2(\mathbf{B}^2)_{jj}+4\mu_3^2\sum_{j\ne k}^n u_ju_kB_{jk}^2
 \le Ca(n+k),
 \label{sr:eq:AO2-identity}
\end{align}
where the last bound follows from $\mu_3^2\le k$, $\|\mathbf{B}\|\le2$,
and
\[
 \sum_{j\ne k}^n |u_ju_k|B_{jk}^2
 \le\frac12\sum_{j\ne k}^n (u_j^2+u_k^2)B_{jk}^2
 =\sum_{j=1}^n u_j^2(\mathbf{B}^2)_{jj}\le4a.
\]
On the event $\{F\ge cn,V\ge cn\}$, using
$(O-1)^2\le2O^2+2$ gives
\[
 \mathbb{E}\left[
 \frac{A(O-1)^2}{F^2V}
   \mathbbm{1}_{\{F\ge cn,V\ge cn\}}\right]
 \le Ca(n+k)n^{-3}.
\]
On the complementary event, $O-1=V-F$ and
\eqref{sr:eq:deterministic-bounds} imply
\[
 \frac{A(O-1)^2}{F^2V}
 =\frac AV\left(\frac VF-1\right)^2\le a(n+1)^2.
\]
Thus
\begin{equation}\label{sr:eq:ratio-remainder}
 \mathbb{E}\frac{A(O-1)^2}{F^2V}
 \le Ca(n+k)n^{-3}+Ca n^2e^{-cn}.
\end{equation}
The same bound holds for the symmetrized row.

Finally, combine \eqref{sr:eq:principal-odd-term},
\eqref{sr:eq:AO-precise},
\eqref{sr:eq:additional-regularization-term}, and
\eqref{sr:eq:ratio-remainder}. The dominated-convergence integral
in \eqref{sr:eq:principal-odd-term} supplies a deterministic
$\rho_n\to0$, uniformly over the permitted projections and
directions, giving \eqref{sr:eq:mean-comparison-rate}.
Since $l_n^2\le k_n=o(n)$, each remaining term is
$o(a n^{-3/2})$, which proves \eqref{sr:eq:mean-comparison}.
All estimates hold for every permitted deterministic
$\mathbf{J},\mathbf{u}$; conditioning proves the assertion when
they are random and independent of the row, completing the proof of this lemma.
\end{proof}

\subsection{Proof of Lemma~\ref{sym:lem:increments}}\label{app:prf_uniformint}
\begin{proof}
Set $l_n=\mathbb{E}|\zeta_n|^3$.  The Cauchy--Schwarz inequality gives
$l_n^2\leq k_n$, so $l_n/\sqrt n\to0$. By Lemma \ref{lem:lower}, we have
\begin{equation}\label{sym:eq:V-smallball}
 \mathbb{P}\{V<cr\mid\mathbf{J}\}\leq Ce^{-cn}.
\end{equation}
On $V\geq cr$, $\Delta\leq CA$, while on its complement
$\Delta\leq n$. Therefore, \eqref{sym:eq:Delta-second} follows from \eqref{sym:eq:increment-moments} and
$k_n/n\to0$. 

Next, we show \eqref{sym:eq:A-UI}. Choose any sequence of the permitted
intermediate matrices and either fresh-row law at each $n$, and
write $S_n=\mathbf{u}^\top\mathbf{z}=\sum_{j=1}^n u_jz_j$. By \eqref{sym:eq:increment-moments} and
$a\le C$, the Cauchy--Schwarz inequality gives
\[
 \mathbb{E}\sum_{j=1}^n |u_j|^3
 \le\mathbb{E}\left\{a^{1/2}
                    \left(\sum_{j=1}^n  u_j^4\right)^{1/2}\right\}
 \le Cn^{-1/2}.
\]
Let $\psi_n(s)$ be the characteristic function of the fresh-row
entry law, which is centered, has variance one, and has absolute
third moment $l_n$. Taylor's formula for $e^{ix}$ gives
\[
 |\psi_n(s)-e^{-s^2/2}|\le C l_n|s|^3,
 \qquad s\in\mathbb{R}.
\] 
Conditional on
$\mathbf{J}$, by the independence,
for every fixed $t\in\mathbb{R}$, we have
\[
 \left|\mathbb{E}(e^{itS_n}\mid\mathbf{J})-e^{-t^2a/2}\right|
 =\left|\prod_{j=1}^n  \psi_n(tu_j)
        -\prod_{j=1}^n   e^{-t^2u_j^2/2}\right|
 \le C|t|^3l_n\sum_{j=1}^n |u_j|^3.
\]
In view of $l_n/\sqrt n\to0$, taking expectations yields
\begin{equation}\label{sym:eq:conditional-mixture-comparison}
 \left|\mathbb{E}e^{itS_n}-\mathbb{E}e^{-t^2a/2}\right|
 \le C|t|^3l_n n^{-1/2}\rightarrow 0.
\end{equation}

Fix an arbitrary subsequence. Since $0\le a\le C$, tightness allows us to select a subsequence, along which $a\dto a_*$ with $0\le a_*\le C$ almost surely. By \eqref{sym:eq:conditional-mixture-comparison},
$\mathbb{E}e^{itS_n}\to\mathbb{E}e^{-t^2a_*/2}$ for every $t\in\mathbb{R}$. The continuity theorem \citep[Theorem~3.3.17(ii)]{Durrett2019} therefore gives $S_n\dto\sqrt{a_*}G$, where $G$ is standard normal and independent of $a_*$. The boundedness gives $\mathbb{E}a^2\to\mathbb{E}a_*^2$, so \eqref{sym:eq:increment-moments} yields
\[
\mathbb{E}S_n^4
=3\mathbb{E}a^2+O(k_n/n)
\rightarrow 3\mathbb{E}a_*^2
=\mathbb{E}(\sqrt{a_*}G)^4.
\]
Set $W_n=|S_n|^4$ and $W=|\sqrt{a_*}G|^4$. Then $W_n\dto W$ and $\mathbb{E}W_n\to\mathbb{E}W$. Using bounded continuous truncations and
$W_n\mathbbm{1}_{\{W_n>2M\}}\le2(W_n-M)_+$, we obtain
\[
\limsup_n\mathbb{E}\bigl[W_n\mathbbm{1}_{\{W_n>2M\}}\bigr]
\le2\mathbb{E}(W-M)_+
\rightarrow 0
\qquad(M\to\infty).
\]
Thus $A^2=S_n^4$ is uniformly integrable along the selected subsequence.

To this end, if \eqref{sym:eq:A-UI} failed uniformly over the permitted choices,
there would be an $\epsilon>0$, integers $n_j\to\infty$, cutoffs
$K_j\to\infty$, and choices of intermediate matrices and fresh-row
laws such that
\[
 \mathbb{E}\bigl[A^2\mathbbm{1}_{\{A>K_j\}}\bigr]\ge\epsilon
 \qquad\text{at }n=n_j.
\]
The preceding argument provides a subsequence on which
$A^2$ is uniformly integrable, contradicting this display because
$\{A>K_j\}=\{A^2>K_j^2\}$, which gives
\eqref{sym:eq:A-UI}.

Finally, conditionally on $\mathbf{J}$, we have
\[
 \mathbb{E}\bigl[(V-r)^2\mid\mathbf{J}\bigr]
 =2r+(k_n-3)\sum_{j=1}^n  J_{jj}^2\leq Cnk_n.
\]
Thus $V/r\to1$ in probability uniformly.  On $V\geq cr$, the
variables $(r/V-1)^2$ are bounded and tend to zero in probability.
Splitting $A^2$ at a fixed truncation at $K$ and applying
\eqref{sym:eq:A-UI} gives
\[
 \sup\mathbb{E}\left[A^2(r/V-1)^2
                         \mathbbm{1}_{\{V\geq cr\}}\right]\to0.
\]
On $V<cr$, the contribution of $\Delta^2$ is at most $Cn^2e^{-cn}$,
and the contribution of $A^2$ tends to zero by uniform integrability. Then, \eqref{sym:eq:Delta-A-L2} follows, which together with the uniform integrability
of $A^2$, convergence to zero in $L^2$ of $\Delta-A$
gives \eqref{sym:eq:Delta-UI}.
\end{proof}

\section{Technical lemmas}\label{app:auxiliary}
This appendix collects some technical lemmas which are used in the proofs.
\subsection{Concentration of symmetric quadratic forms}
\begin{lemma}\label{lem:quadratic-LLN}
	Let $\mathbf{z}$ have i.i.d. centered variance-one coordinates from a fixed law, in dimension at most $Cn$. If $\mathbf{D}$ is a symmetric matrix independent of $\mathbf{z}$ and $\|\mathbf{D}\|\le K$, then
	\begin{equation}\label{eq:quadratic-LLN}
	\mathbb{E}\left|\frac{\mathbf{z}^\top \mathbf{D}\mathbf{z}-\tr \mathbf{D}}{n}\right|\rightarrow0
	\end{equation}
	uniformly over such matrices $\mathbf{D}=\mathbf{D}_n$ with dimension at most $Cn$. For centered variance-one i.i.d. entries forming a triangular array with fourth moment $k_n$, one has
	\begin{equation}\label{eq:quadratic-L2}
	\mathbb{E}|\mathbf{z}^\top \mathbf{D}\mathbf{z}-\tr \mathbf{D}|^2\le C_K n(k_n+1).
	\end{equation}
	In particular, \eqref{eq:quadratic-LLN} holds if $k_n=o(n)$.
\end{lemma}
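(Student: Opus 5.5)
The $L^2$ bound \eqref{eq:quadratic-L2} comes first, because the law-of-large-numbers statement \eqref{eq:quadratic-LLN} will follow from it after a truncation. Conditioning on $\mathbf{D}$ (legitimate by independence), I would write $N\le Cn$ for the dimension and expand
\[
\mathbf{z}^\top\mathbf{D}\mathbf{z}-\tr\mathbf{D}=\sum_{i}D_{ii}(z_i^2-1)+\sum_{i\ne j}D_{ij}z_iz_j .
\]
The two sums are uncorrelated, since their cross terms contain a coordinate to the first power. The diagonal sum has variance $(k_n-1)\sum_iD_{ii}^2$, and the off-diagonal sum has variance $2\sum_{i\ne j}D_{ij}^2\le 2\tr\mathbf{D}^2$. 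Using $\sum_iD_{ii}^2\le NK^2$ and $\tr\mathbf{D}^2\le NK^2$, the second moment is at most $CK^2n(k_n+1)$. This proves \eqref{eq:quadratic-L2}, and the final clause (the case $k_n=o(n)$) follows by Cauchy--Schwarz, giving $\mathbb{E}|\cdot|/n\le C_K\sqrt{(k_n+1)/n}\to0$.

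For a fixed law with only finite variance, I would truncate at a level $M$ that does not depend on $n$. Put $\bar z_i=z_i\mathbbm{1}_{\{|z_i|\le M\}}-\mathbb{E}z_i\mathbbm{1}_{\{|z_i|\le M\}}$ and $w_i=z_i-\bar z_i$. Then $\mathbb{E}w_i=0$ and $\mathbb{E}w_i^2=:\beta_M\to0$ as $M\to\infty$. Writing $\mathbf{z}=\bar{\mathbf z}+\mathbf{w}$, I would decompose
\[
\mathbf{z}^\top\mathbf{D}\mathbf{z}-\tr\mathbf{D}=\bigl(\bar{\mathbf z}^\top\mathbf{D}\bar{\mathbf z}-\mathbb{E}\bar z_1^2\tr\mathbf{D}\bigr)+2\bar{\mathbf z}^\top\mathbf{D}\mathbf{w}+\mathbf{w}^\top\mathbf{D}\mathbf{w}-(1-\mathbb{E}\bar z_1^2)\tr\mathbf{D}.
\]
\begin{itemize}
\item The first bracket is handled by the same $L^2$ computation: the variables $\bar z_i$ are centered with fourth moment at most $16M^4$, so its $L^1$ norm is $O(K M^2\sqrt n)=o(n)$.
\item The last two terms involve $\mathbf{w}$. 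Since $|\mathbf{w}^\top\mathbf{D}\mathbf{w}|\le K\|\mathbf{w}\|^2$ and $\mathbb{E}\|\mathbf{w}\|^2=N\beta_M$, the quadratic term contributes at most $CK\beta_M n$.
\item The cross term satisfies $|\bar{\mathbf z}^\top\mathbf{D}\mathbf{w}|\le K\|\bar{\mathbf z}\|\|\mathbf w\|$, so by Cauchy--Schwarz its expectation is at most $CKn\sqrt{\beta_M}$.
\item Finally $1-\mathbb{E}\bar z_1^2=O(\sqrt{\beta_M})$, because $\bar z_i$ and $w_i$ are uncorrelated only up to the centering correction. Combined with $|\tr\mathbf{D}|\le KN$, the trace term is also $O(Kn\sqrt{\beta_M})$.
\end{itemize}

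Dividing by $n$, I get $\limsup_n\sup_{\mathbf{D}}\mathbb{E}|\cdot|/n\le CK\sqrt{\beta_M}$, and letting $M\to\infty$ gives \eqref{eq:quadratic-LLN}. The bound is uniform over all admissible $\mathbf{D}$ because only $\|\mathbf{D}\|\le K$ and $N\le Cn$ were used. There is no real obstacle here. The only point that needs care is that the fixed law has just two moments, so \eqref{eq:quadratic-L2} cannot be applied directly and the $n$-independent truncation above is required. Tracking the centering constant $\mathbb{E}\bar z_1^2\ne1$ in that truncation is routine.
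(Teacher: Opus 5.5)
Your proof is correct and follows essentially the same route as the paper. Both arguments use a centered truncation at a fixed level $M$, Cauchy--Schwarz for the error between the full and truncated quadratic forms, an $O(\sqrt{\beta_M})$ correction for $1-\mathbb{E}\bar z_1^2$, and the variance identity, applied with fourth moment $16M^4$ for the truncated form and with fourth moment $k_n$ for \eqref{eq:quadratic-L2}. The differences are cosmetic: you prove the $L^2$ bound first and split the truncation error into a cross term and a $\mathbf{w}$-quadratic term, whereas the paper uses the factorization $(\mathbf{z}-\mathbf{z}^{(M)})^\top\mathbf{D}(\mathbf{z}+\mathbf{z}^{(M)})$.
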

\begin{proof}
	Let $\mathbf{z}\in\mathbb{R}^d$, where $d\le Cn$. We may first condition on $\mathbf{D}$, which is independent of $\mathbf{z}$. All constants below are uniform over $\|\mathbf{D}\|\le K$. For fixed $M>0$, define
	\[
	z_j^{(M)}=z_j\mathbbm{1}_{\{|z_j|\le M\}}
	-\mathbb{E}\big[z_j\mathbbm{1}_{\{|z_j|\le M\}}\big],
	\qquad 1\le j\le d,
	\]
	and let $\vartheta_M=\mathbb{E}(z_j-z_j^{(M)})^2$. Since $z_j-z_j^{(M)}$ is the centered omitted tail,
	\[
	\vartheta_M\le\mathbb{E}\big[z_j^2\mathbbm{1}_{\{|z_j|>M\}}\big]\to0.
	\]
	Setting $v_M=\mathbb{E}(z_j^{(M)})^2\le1$, the Cauchy--Schwarz inequality gives
	\begin{align*}
	\frac1n\mathbb{E}\left|
	\mathbf{z}^\top\mathbf{D}\mathbf{z}
	-(\mathbf{z}^{(M)})^\top\mathbf{D}\mathbf{z}^{(M)}\right|
	\le\frac Kn
	\big(\mathbb{E}\|\mathbf{z}-\mathbf{z}^{(M)}\|^2\big)^{1/2}
	\left(\big(\mathbb{E}\|\mathbf{z}\|^2\big)^{1/2}
	+\big(\mathbb{E}\|\mathbf{z}^{(M)}\|^2\big)^{1/2}\right)
	\le C_K\sqrt{\vartheta_M}.
	\end{align*}
	Also $|1-v_M|\le\vartheta_M+2\sqrt{v_M\vartheta_M}\to0$. Conditional on $\mathbf{D}$, the truncated quadratic form has mean $v_M\tr\mathbf{D}$. Thus the normalized error caused by changing this mean to $\tr\mathbf{D}$ is at most
	\[
	\frac1n|1-v_M|\,|\tr\mathbf{D}|\le C_K|1-v_M|.
	\]
	Let $\mu_{4,M}=\mathbb{E}(z_j^{(M)})^4$. Separating diagonal terms and unordered off-diagonal pairs gives
	\[
	\Var\big((\mathbf{z}^{(M)})^\top\mathbf{D}\mathbf{z}^{(M)}\mid\mathbf{D}\big)
	=2v_M^2\tr\mathbf{D}^2
	+(\mu_{4,M}-3v_M^2)\sum_{j=1}^dD_{jj}^2
	\le C_{K,M}n.
	\]
	Applying the Cauchy--Schwarz inequality to this conditional variance gives
	\[
	\frac1n\mathbb{E}\left|
	(\mathbf{z}^{(M)})^\top\mathbf{D}\mathbf{z}^{(M)}-v_M\tr\mathbf{D}
	\right|\le C_{K,M}/\sqrt n.
	\]
	The three estimates above, with $n\to\infty$ first and $M\to\infty$ afterwards, give \eqref{eq:quadratic-LLN}. Since all bounds are uniform in $\mathbf{D}$, integrating over an independent random $\mathbf{D}$ yields the stated result. For centered variance-one triangular entries, the same conditional variance identity with fourth moment $k_n$ directly gives \eqref{eq:quadratic-L2}.
\end{proof}

\subsection{A lower bound for positive quadratic forms}

\begin{lemma}\label{lem:positive-form}
	Let $\mathbf{z}\in\mathbb{R}^n$ have independent coordinates from a fixed nondegenerate real law. There are $a,c,C>0$, depending only on that law, such that for every nonzero positive semidefinite matrix $\mathbf{M}$,
	\begin{equation}
	\mathbb{P}(\mathbf{z}^\top \mathbf{M}\mathbf{z}<a\tr \mathbf{M})
	\le C\exp\big(-c\tr \mathbf{M}/\|\mathbf{M}\|\big).
	\end{equation}
\end{lemma}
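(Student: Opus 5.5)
The plan is to prove this lower-tail bound through an exponential moment (Laplace transform) estimate. The key structural fact is that the bound follows from controlling $\mathbb{E}e^{-\lambda\mathbf{z}^\top\mathbf{M}\mathbf{z}}$ at the scale $\lambda\asymp 1/\|\mathbf{M}\|$.

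By homogeneity, normalize so that $\|\mathbf{M}\|=1$, and set $T=\tr\mathbf{M}$. The quantity to control is $\mathbb{E}\exp(-\lambda\mathbf{z}^\top\mathbf{M}\mathbf{z})$ for a fixed small $\lambda>0$. By the exponential Markov inequality,
\[
\mathbb{P}(\mathbf{z}^\top\mathbf{M}\mathbf{z}<aT)\le e^{\lambda aT}\,\mathbb{E}e^{-\lambda\mathbf{z}^\top\mathbf{M}\mathbf{z}},
\]
so it suffices to show $\mathbb{E}e^{-\lambda\mathbf{z}^\top\mathbf{M}\mathbf{z}}\le e^{-c_1T}$ for some fixed $\lambda$ and $c_1>0$. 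Then choosing $a<c_1/\lambda$ gives the claim with constant $c=c_1-\lambda a$.

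To bound the Laplace transform without any moment assumption beyond nondegeneracy, I will first dispose of a possible nonzero mean. Let $\mathbf{z}'$ be an independent copy of $\mathbf{z}$ and set $\mathbf{w}=\mathbf{z}-\mathbf{z}'$, whose coordinates are symmetric and nondegenerate. Since $\mathbf{w}^\top\mathbf{M}\mathbf{w}\le 2\mathbf{z}^\top\mathbf{M}\mathbf{z}+2\mathbf{z}'^\top\mathbf{M}\mathbf{z}'$, Jensen or independence gives
\[
\mathbb{E}e^{-\lambda\mathbf{w}^\top\mathbf{M}\mathbf{w}/2}\ge \big(\mathbb{E}e^{-\lambda\mathbf{z}^\top\mathbf{M}\mathbf{z}}\big)^2 .
\]
It therefore suffices to treat symmetric coordinates. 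Next, I linearize with an independent Gaussian vector $\mathbf{g}$:
\[
e^{-\lambda\mathbf{w}^\top\mathbf{M}\mathbf{w}}=\mathbb{E}_{\mathbf{g}}\cos\big(\sqrt{2\lambda}\,\mathbf{g}^\top\mathbf{M}^{1/2}\mathbf{w}\big),
\]
and take the expectation over the independent coordinates $w_j$. This gives $\mathbb{E}_{\mathbf{g}}\prod_j\varphi\big(\sqrt{2\lambda}(\mathbf{M}^{1/2}\mathbf{g})_j\big)$, where $\varphi$ is the real characteristic function of $w_1$. Nondegeneracy gives $\varphi(t)\le e^{-c\min(t^2,1)}$ on a neighbourhood scale, although $\varphi$ may return near $1$ for lattice laws.

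Because of that lattice issue, I will instead use the standard small-ball route directly. Truncate, setting $y_j=w_j\mathbbm{1}_{\{|w_j|\le K\}}$ with $K$ chosen so that $\mathbb{E}y_j^2=\sigma^2>0$, which is possible by nondegeneracy. Then $\mathbf{w}^\top\mathbf{M}\mathbf{w}$ stochastically dominates $\mathbf{y}^\top\mathbf{M}\mathbf{y}$ in the needed sense: conditionally on the signs and magnitudes, symmetry lets us write $w_j=\varepsilon_j|w_j|$. Concretely, I would argue conditionally on $|w_j|$ that the Rademacher chaos $\boldsymbol{\varepsilon}^\top\mathbf{N}\boldsymbol{\varepsilon}$, with $\mathbf{N}=\diag(|w|)\mathbf{M}\diag(|w|)$, has mean $\tr\mathbf{N}=\sum_j M_{jj}w_j^2$. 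The Hanson--Wright inequality for bounded variables (after truncation, $\|\mathbf{N}_K\|\le K^2$ and $\|\mathbf{N}_K\|_F^2\le K^4T$) gives a lower deviation of order $\tr\mathbf{N}$ with probability at most $e^{-c\tr\mathbf{N}/K^2}$. Separately, $\sum_jM_{jj}\min(w_j^2,K^2)\ge \sigma^2T/2$ except with probability $e^{-cT}$, by a Bernstein/Laplace bound for a sum of independent bounded nonnegative terms with weights $M_{jj}\le1$. Combining these two events gives $\mathbf{y}^\top\mathbf{M}\mathbf{y}\ge\sigma^2T/4$ off an event of probability $Ce^{-cT}$, and the final bound is then transferred from $\mathbf{y}$ to $\mathbf{w}$ and back to $\mathbf{z}$.

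The main obstacle is this final transfer. Dropping large coordinates does not monotonically decrease a quadratic form with off-diagonal entries, so I expect to need a conditioning trick. The idea is to condition on the set $S$ of large coordinates and use $\mathbf{z}^\top\mathbf{M}\mathbf{z}\ge\min_{\mathbf{x}_S}(\cdot)$, which equals the Schur complement form $\mathbf{z}_{S^c}^\top(\mathbf{M}/\mathbf{M}_{SS})\mathbf{z}_{S^c}$. This Schur complement has trace at least $T-|S|$ and norm at most $1$, and $|S|\le \epsilon n$-type bounds hold with exponential probability. Handling the decoupling of $\mathbf{z}$ versus the symmetrized $\mathbf{w}$ via the Schur complement is where I expect the most care to be needed.
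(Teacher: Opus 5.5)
There is a genuine gap, and it sits where you flagged it, in the passage from the truncated vector $\mathbf{y}$ back to $\mathbf{w}$. The passage from $\mathbf{w}$ back to $\mathbf{z}$ is not the issue: your inequality $\mathbb{E}e^{-\lambda\mathbf{w}^\top\mathbf{M}\mathbf{w}/2}\ge(\mathbb{E}e^{-\lambda\mathbf{z}^\top\mathbf{M}\mathbf{z}})^2$ already transfers any Laplace bound from $\mathbf{w}$ to $\mathbf{z}$. What fails is the Schur-complement step, because it is not uniform in $\mathbf{M}$. Minimizing over $\mathbf{x}_S$ removes the directions $\mathbf{M}^{1/2}\mathbf{e}_j$, $j\in S$. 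This can cost a full unit of trace per removed coordinate however small $M_{jj}$ is; for example, $\mathbf{M}=\mathbf{v}\mathbf{v}^\top$ with $v_1=\delta$ loses all its trace when coordinate $1$ is eliminated. So the best you get is $\tr(\mathbf{M}/\mathbf{M}_{SS})\ge T-|S|$ with $\|\mathbf{M}\|=1$.

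For an unbounded law and fixed $K$, however, $|S|\sim\mathrm{Bin}(n,\pi_K)$ with $\pi_K=\mathbb{P}(|w_1|>K)>0$. Hence $|S|\approx\pi_K n\gg T$ as soon as $T=o(n)$. Concretely, take a generic rank-$r$ projection with $1\ll r\ll n$. The roughly $\pi_K n>r$ vectors $\mathbf{M}\mathbf{e}_j$, $j\in S$, span its range, so the Schur complement vanishes identically and your lower bound is $0$. Choosing $K$ with $\pi_K\lesssim T/n$ does not help either: the Hanson--Wright exponent $c\tr\mathbf{N}/K^2$ then degrades, and the constants stop depending only on the law. The lemma is claimed for every nonzero positive semidefinite $\mathbf{M}$, so the regime $\tr\mathbf{M}/\|\mathbf{M}\|\ll n$ must be covered. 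Note also that $\mathbf{w}^\top\mathbf{M}\mathbf{w}$ does not stochastically dominate $\mathbf{y}^\top\mathbf{M}\mathbf{y}$ in any usable sense.

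The missing idea is to freeze the heavy coordinates rather than minimize over them, and to use the randomness of the bounded part to rule out cancellation. In your symmetric setting, fix $K$ and condition on $|\mathbf{w}|$ and on the signs of the coordinates with $|w_j|>K$. Write $\mathbf{M}^{1/2}\mathbf{w}=\mathbf{u}+\mathbf{A}\boldsymbol{\varepsilon}_G$, where $\mathbf{u}$ is fixed and $\mathbf{A}=\mathbf{M}^{1/2}_{\cdot G}\diag(|w_j|)_{j\in G}$, so $\|\mathbf{A}\|\le K$ and $\|\mathbf{A}\|_F^2=\sum_{j\in G}M_{jj}w_j^2$.

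Hanson--Wright gives $\|\mathbf{A}\boldsymbol{\varepsilon}_G\|^2\ge\tau:=\|\mathbf{A}\|_F^2/2$ off an event of probability $2e^{-c\tau/K^2}$. Moreover $\tau\gtrsim T$ off probability $e^{-cT}$, using $\mathbb{P}(b\le|w_1|\le K)>0$. On that event, $\|\mathbf{u}+\mathbf{A}\boldsymbol{\varepsilon}_G\|^2<\tau/2$ forces $2\mathbf{u}^\top\mathbf{A}\boldsymbol{\varepsilon}_G<-\|\mathbf{u}\|^2-\tau/2$. Hoeffding, with variance proxy $\|\mathbf{A}^\top\mathbf{u}\|^2\le K^2\|\mathbf{u}\|^2$ and the bound $(\|\mathbf{u}\|^2+\tau/2)^2\ge2\tau\|\mathbf{u}\|^2$, bounds this by $e^{-\tau/(4K^2)}$, uniformly in $\mathbf{u}$.

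This is exactly the paper's argument. The paper applies it directly to $\mathbf{z}$ through the mixture $z_j=\eta_jU_j+(1-\eta_j)V_j$, where the $U_j$ are bounded with positive variance. The mean and the unbounded part are both absorbed into the frozen vector $\mathbf{u}$, and the retained trace $\sum_j\eta_jM_{jj}$ is controlled by a Laplace bound, so no symmetrization or truncation transfer is needed.
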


\begin{proof}
	Since the entry law is nondegenerate, we can choose a bounded interval on which the conditional distribution has positive variance. Hence each coordinate can be represented as
	\[
	z_j=\eta_jU_j+(1-\eta_j)V_j,
	\]
	where \(\eta_j\) are i.i.d. Bernoulli variables with parameter \(\varrho>0\), the \(U_j\) are i.i.d. bounded random variables with mean \(\mu\) and variance \(v_0>0\), and the three families are independent. If \(\varrho=1\), the variables \(V_j\) may be chosen arbitrarily.
	
	Set $T=\tr \mathbf{M}$, $L_0=\|\mathbf{M}\|$, and $T_\eta=\sum_{j=1}^n\eta_jM_{jj}$. For $s>0$, independence and $1-e^{-x}\ge x-x^2/2$ for $x\ge 0$ give
	\[
	\mathbb{E} e^{-sT_\eta}\le \exp\big(-\varrho\sum_{j=1}^{n}\big(1-e^{-sM_{jj}}\big)\big)
	\le\exp\big(-\varrho sT+\tfrac12\varrho s^2\sum_{j=1}^nM_{jj}^2\big).
	\]
	Since $\sum_{j=1}^nM_{jj}^2\le L_0T$, exponential Markov's inequality with $s=1/(2L_0)$ yields
	\begin{equation}\label{eq:retained-trace-new}
	\mathbb{P}(T_\eta<\varrho T/2)\le e^{-\varrho T/(8L_0)}.
	\end{equation}
	Let $\boldsymbol{\eta}=(\eta_1,\ldots,\eta_n)^\top$. Conditional on $(\eta_j,V_j)_{j=1}^n$, define
	\[
	\mathbf M^{1/2}\mathbf z=\mathbf B\mathbf w+\mathbf u,
	\qquad
	\mathbf B=\mathbf M^{1/2}\diag(\boldsymbol\eta),
	\qquad
	w_j=U_j-\mu.
	\]
	Then \(\mathbf u\) is fixed and the \(w_j\) are independent, centered, and bounded. Moreover,
	\[
	\|\mathbf B\|^2\le L_0,\qquad
	\tr(\mathbf B^\top\mathbf B)=T_\eta,\qquad
	\tr\big((\mathbf B^\top\mathbf B)^2\big)\le L_0T_\eta.
	\]
	The Hanson--Wright inequality \citep[Theorem~1.1]{RudelsonVershynin2013} therefore gives
	\begin{equation}\label{eq:Bw-lower}
	\mathbb P\left(
	\|\mathbf B\mathbf w\|^2<\frac{v_0T_\eta}{2}
	\,\middle|\,
	(\eta_j,V_j)_{j=1}^n
	\right)
	\le
	2e^{-cT_\eta/L_0}.
	\end{equation}
	
	It remains to control a possible cancellation between \(\mathbf B\mathbf w\) and \(\mathbf u\). On the event
	\[
	\|\mathbf B\mathbf w\|^2\ge v_0T_\eta/2,
	\qquad
	\|\mathbf B\mathbf w+\mathbf u\|^2<v_0T_\eta/4,
	\]
	we have
	\[
	2\mathbf u^\top\mathbf B\mathbf w
	<
	-\|\mathbf u\|^2-\frac{v_0T_\eta}{4}.
	\]
	Since the \(w_j\) are independent, centered, and bounded, Hoeffding's inequality gives, conditionally on \((\eta_j,V_j)_{j=1}^n\),
	\[
	\mathbb P\left(
	2\mathbf u^\top\mathbf B\mathbf w
	<
	-\|\mathbf u\|^2-\frac{v_0T_\eta}{4}
	\right)
	\le
	e^{-cT_\eta/L_0}.
	\]
	Indeed, \(\|\mathbf B^\top\mathbf u\|^2\le L_0\|\mathbf u\|^2\), and
	\[
	\frac{\big(\|\mathbf u\|^2+v_0T_\eta/4\big)^2}
	{\|\mathbf u\|^2}
	\ge v_0T_\eta
	\]
	when \(\mathbf u\ne0\); if \(\mathbf u=0\), the cancellation event is empty.
	Combining this estimate with \eqref{eq:Bw-lower}, on \(T_\eta\ge\varrho T/2\) we obtain
	\[
	\mathbb P\left(
	\mathbf z^\top\mathbf M\mathbf z
	<
	\frac{v_0\varrho}{8}T
	\,\middle|\,
	(\eta_j,V_j)_{j=1}^n
	\right)
	\le
	3e^{-cT/L_0}.
	\]
	Together with \eqref{eq:retained-trace-new}, this yields
	\[
	\mathbb P\big(
	\mathbf z^\top\mathbf M\mathbf z<a\tr\mathbf M
	\big)
	\le
	C\exp\left(
	-c\frac{\tr\mathbf M}{\|\mathbf M\|}
	\right),
	\]
	with \(a=v_0\varrho/8\), as desired.
\end{proof}

\subsection{Projection quadratic form and the smallest eigenvalue}

The estimates below are direct consequences of Theorem~1 and Corollary~6 in \citet{Tikhomirov2016} and Theorem 1 in \cite{DabagiaFernandez2025}; we omit their proofs.
\begin{lemma}\label{lem:lower}
	Suppose a family of real laws satisfies, for some fixed $a,b>0$,
	\begin{equation}\label{eq:anticoncentration}
	\sup_{u\in\mathbb{R}}\mathbb{P}(|\zeta-u|\le a)\le1-b.
	\end{equation}
	There are constants $c,C>0$, uniform over this family, with the following properties. If $\mathbf{z}$ has independent coordinates from the family and $\mathbf{J}$ is an independent orthogonal projection of rank $r$, then
	\begin{equation}
	\mathbb{P}(\mathbf{z}^\top \mathbf{J}\mathbf{z}<c r\mid \mathbf{J})\le C e^{-cr}.
	\end{equation}
	Fix $\gamma_0<1$ and suppose $d/N\le\gamma_0$. If $\mathbf{A}$ is an $N\times d$ matrix with i.i.d. entries
	satisfying the same concentration bound and $\mathbf{D}$ is any
	deterministic $N\times d$ matrix, then
	\begin{equation}
	\mathbb{P}\{\lambda_{\min}[(\mathbf{A}+\mathbf{D})^{\top}(\mathbf{A}+\mathbf{D})]<c N\}
	\le Ce^{-cN}.
	\label{lt:eq:external-i.i.d.-inverse}
	\end{equation}
	If the entries of $\mathbf{B}\in \mathbb{R}^{i\times N}$ are instead independent, centered,
	variance one and uniformly anti-concentrated as \eqref{eq:anticoncentration}, and $i/N\le\gamma_0<1$, then, for all sufficiently large $N$,
	\begin{equation}\label{sym:eq:hybrid-smallest}
	\mathbb{P}\big(\lambda_{\min}(\mathbf{B}\mathbf{B}^\top)<cN\big)\le e^{-cN}.
	\end{equation}
\end{lemma}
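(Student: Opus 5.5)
The authors omit this proof and attribute it to \citet{Tikhomirov2016} and \citet{DabagiaFernandez2025}. The plan below instead derives all three estimates from uniform anti-concentration, giving a self-contained route.

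\emph{Projection lower bound.} Write $V=\mathbf{z}^\top\mathbf{J}\mathbf{z}=\|\mathbf{J}\mathbf{z}\|^2$, which is invariant under adding a component of $\mathbf{z}$ orthogonal to the range of $\mathbf{J}$. Use the mixture decomposition from the proof of Lemma~\ref{lem:positive-form}. Condition \eqref{eq:anticoncentration}, uniform over the family, lets us write each $\zeta=\eta U+(1-\eta)W$. Here $\eta$ is Bernoulli$(\varrho)$, and $U$ is supported on an interval of length $O(a)$ with variance at least $v_0>0$. The constants $\varrho,v_0$ depend only on $(a,b)$; for instance, split the real line into intervals of length $a$ and use that no single one carries mass more than $1-b$. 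Then run that proof with $\mathbf{M}=\mathbf{J}$, where $\tr\mathbf{M}=r$ and $\|\mathbf{M}\|=1$. The retained trace $T_\eta=\sum\eta_jJ_{jj}$ exceeds $\varrho r/2$ except with probability $e^{-\varrho r/8}$. Hanson--Wright for the bounded centered $w_j$, together with the Hoeffding cancellation step, gives $\mathbb{P}(V<cr\mid\mathbf{J})\le Ce^{-cr}$. Every constant depends only on $(a,b)$, so the bound is uniform over the family. No moment assumption is needed, because only the bounded part $U$ is used.

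\emph{Smallest singular value of $\mathbf{A}+\mathbf{D}$.} I would use an $\epsilon$-net argument on the sphere $S^{d-1}$, in three steps.
\begin{enumerate}
\item \emph{Fixed direction.} For a fixed unit $\mathbf{x}$, the coordinates $((\mathbf{A}+\mathbf{D})\mathbf{x})_i$ are independent. Each is anti-concentrated at scale $c$ by a Kolmogorov--Rogozin/L\'evy concentration bound for $\sum_k A_{ik}x_k$, uniformly in the shift. Tensorization then gives $\mathbb{P}(\|(\mathbf{A}+\mathbf{D})\mathbf{x}\|^2<c_1N)\le e^{-c_2N}$.
\item \emph{Union bound.} A net of size $(3/\epsilon)^d$ with $d\le\gamma_0N$ is affordable only if $c_2>\gamma_0\log(3/\epsilon)$. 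This forces a splitting of the sphere into compressible and incompressible vectors, as in Rudelson--Vershynin and Tikhomirov.
\item \emph{Compressible vectors.} These are handled by small nets together with the fixed-direction bound above. Incompressible vectors are handled through the distance of a column to the span of the others. Heavy tails make the operator norm of $\mathbf{A}$ unbounded, so one must avoid nets that rely on it. This is exactly Tikhomirov's contribution for i.i.d.\ entries without moment assumptions; its proof is insensitive to a deterministic shift $\mathbf{D}$, which only shifts the anti-concentration centers. I would cite it rather than reprove it.
\end{enumerate}

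\emph{Non-identical centered rows.} For $\mathbf{B}$ with independent, centered, variance-one, uniformly anti-concentrated entries, apply the same scheme to $\mathbf{B}^\top$, an $N\times i$ matrix. Row-wise non-identical laws are allowed, since the fixed-direction step only uses independence and uniform anti-concentration. Finite variance makes the incompressible step easier, e.g.\ via \citet{DabagiaFernandez2025}. Incompressible vectors are treated by the negative second moment/distance identity $\sum_k \mathrm{dist}(\mathbf{b}_k,H_k)^{-2}=\tr(\mathbf{B}\mathbf{B}^\top)^{-1}$. Each distance is lower bounded by the projection estimate from the first step: apply it to the row $\mathbf{b}_k$ and to the projection onto $H_k^\perp$, whose rank is $N-i+1\ge(1-\gamma_0)N$.

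\emph{Main obstacle.} The main obstacle is the incompressible part of the smallest singular value estimate when there are no moments. For the first step I would try to make the uniformity in $(a,b)$ explicit in the cited theorems and otherwise defer to them.
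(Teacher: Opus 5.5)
The paper gives no proof beyond citing \citet{Tikhomirov2016} and \citet{DabagiaFernandez2025}. Your two singular-value parts end up deferring to the same sources, so the problems are in the steps you claim to do by hand.

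\textbf{The projection bound.} Your uniform mixture decomposition is false. The laws $\mathrm{Unif}[-K,K]$ with $K\ge a/(1-b)$ all satisfy \eqref{eq:anticoncentration} with the same $(a,b)$. Yet every interval of length $O(a)$ carries mass $O(a/K)\to0$, so no $\varrho$ depending only on $(a,b)$ exists. Running Lemma~\ref{lem:positive-form} then gives the threshold $c\varrho v_0 r$ and the rate $e^{-c\varrho r}$, both of which degenerate; anti-concentration alone does not localize mass at any fixed scale. What does make your argument work is tightness. Choose $M$ with $\sup\mathbb{P}(|\zeta|>M)\le b/2$ over the family, and let $U$ be $\zeta$ conditioned on $[-M,M]$. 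Then $U$ is bounded by $M$, still satisfies \eqref{eq:anticoncentration} with $b$ replaced by $b/(2-b)$, and hence has variance at least $a^2b/(2-b)$. This covers every family to which the paper actually applies the lemma, since all have uniformly bounded variance. It does not cover the lemma as stated, which assumes no tightness. For that you need a scale-free small-ball estimate for projections, of the kind the paper takes from \citet{Tikhomirov2016}.

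\textbf{The estimate \eqref{sym:eq:hybrid-smallest}.} Your route via the negative second moment identity loses a factor of order $N$. Suppose every $\operatorname{dist}(\mathbf{b}_k,H_k)^2\ge c(N-i+1)$. The identity then gives only $\tr(\mathbf{B}\mathbf{B}^\top)^{-1}\le i/(c(N-i+1))$, i.e.\ $\lambda_{\min}(\mathbf{B}\mathbf{B}^\top)\ge c(N-i+1)/i$. This is of order one, not of order $N$, when $i\asymp N$, which is the regime the paper needs. Restricting to incompressible vectors does not help. The bound $\|\mathbf{B}^\top\mathbf{x}\|\ge|x_k|\operatorname{dist}(\mathbf{b}_k,H_k)$ with $|x_k|\gtrsim i^{-1/2}$ again yields only $\sqrt{N/i}$. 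Single-row distances cannot reach the $\sqrt N$ scale for rectangular matrices with $i\asymp N$, so this step must genuinely be taken from \citet{DabagiaFernandez2025}, as the paper does.

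\textbf{The shift $\mathbf{D}$.} \citet{Tikhomirov2016} treats i.i.d.\ entries, whereas $\mathbf{A}+\mathbf{D}$ has independent but non-identically distributed entries. Your remark that $\mathbf{D}$ merely shifts the anti-concentration centers is therefore an extension that needs checking. In the paper's applications the entries are themselves i.i.d., so $\mathbf{D}=\mathbf{0}$ suffices there.
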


\subsection{Fourth moments and variance of the projected constant vector}
\label{app:projection-moments}

The following lemma gives uniform estimates for the row sums of the
null-space projection. It applies both to the intermediate matrices in the row replacement (i.e., Section \ref{sec:lindeberg}) and to the conditional truncations that retain a small nonzero mean (i.e., Appendix \ref{lp:subsec:bias} for Example \ref{thm:regular-boundary}).

\begin{lemma}\label{lem:projection-moments}
Fix $\gamma_0<1$ and $p\le\gamma_0n$, and put $m=n-p$. For
\[
 \mathcal{I}_n=\{(d,N):0\le d\le p,\quad m+d\le N\le n\},
\]
let $\mathbf{B}$ be a $d\times N$ matrix from either of the following
two classes.
\begin{enumerate}
\item Its entries are independent, and the coordinates in each row
have a non-atomic centered variance-one law. All these laws satisfy \eqref{eq:anticoncentration}
with the same constants and have fourth moments bounded by
$k_n=o(n)$. 
\item Its entries are i.i.d. with a non-atomic law $\zeta_n$ satisfying
\eqref{eq:anticoncentration} with fixed constants. Denote
\[
 \mu_n=\mathbb{E}\zeta_n,\qquad
 \omega_n^2=\Var(\zeta_n),\qquad
 k_n=\mathbb{E}(\zeta_n-\mu_n)^4,
\]
and assume, for fixed $c_0,C_0>0$, that
\begin{equation}\label{eq:projection-moment-assumptions}
 n\mu_n^2\le C_0,\qquad c_0\le\omega_n^2\le C_0,
 \qquad k_n=o(n).
\end{equation}
\end{enumerate}
Setting $\mathbf{t}=\mathbf{P}(\mathbf{B})\mathbf{1}_N$, we have
\begin{equation}\label{sym:eq:t-fourth}
 \sup_{(d,N)\in\mathcal{I}_n}
 \sup_{\text{row-law choices}}\max_{1\le j\le N}
 \mathbb{E}|t_j|^4\le C,
\end{equation}
and, for every such matrix,
\begin{equation}\label{lp:eq:noncentered-variance}
 \Var\{\mathbf{1}_N^\top\mathbf{P}(\mathbf{B})\mathbf{1}_N\}
 \le Cd.
\end{equation}
\end{lemma}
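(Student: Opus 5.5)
Both estimates will come from leave-one-column identities, as in the proof of Lemma~\ref{sat:lem:truncation}, combined with the smallest-eigenvalue bound \eqref{sym:eq:hybrid-smallest} of Lemma~\ref{lem:lower}. Since $N\ge m+d$ and $p\le\gamma_0 n$, we have $d/N\le\gamma_0'<1$ uniformly over $\mathcal I_n$, so Lemma~\ref{lem:lower} applies uniformly.

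\textbf{Fourth moment \eqref{sym:eq:t-fourth}.} Fix $j$ and delete column $\mathbf b_j$. Write $\mathbf B_{-j}$ for the remaining matrix, $\mathbf M=(\mathbf B_{-j}\mathbf B_{-j}^\top)^{-1}$ and $\mathbf w=\mathbf M\mathbf B_{-j}\mathbf 1_{N-1}$. On $\mathcal F(\mathbf B_{-j})$ the exact identities are
\[
d_j=\frac{1}{1+\mathbf b_j^\top\mathbf M\mathbf b_j},\qquad t_j=d_j(1-\mathbf b_j^\top\mathbf w).
\]
Hence $|t_j|\le|1-\mathbf b_j^\top\mathbf w|$, and $|t_j|\le \sqrt{N}$ deterministically. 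On the event $\|\mathbf M\|\le C/N$, which has probability at least $1-e^{-cN}$ (for class (ii), apply \eqref{lt:eq:external-i.i.d.-inverse} with $\mathbf D=\mu_n\mathbf 1\mathbf 1^\top$), we get $\|\mathbf w\|^2\le\|\mathbf M\|\,\mathbf 1^\top\mathbf B_{-j}^\top\mathbf M\mathbf B_{-j}\mathbf 1\le C$.

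Condition on $\mathbf B_{-j}$ and split $\mathbf b_j$ into its mean and centered parts. The mean part is $\mathbf 0$ in class (i) and $\mu_n\mathbf 1_d$ in class (ii), so its contribution is at most $|\mu_n|\sqrt d\,\|\mathbf w\|\le C\sqrt{n\mu_n^2}\le C$. The centered linear form $\sum_i w_i(b_{ij}-\mathbb E b_{ij})$ has conditional fourth moment
\[
3\|\mathbf w\|^4+(k_n-3)\sum_i w_i^4\le C\|\mathbf w\|^4(1+k_n\|\mathbf w\|_\infty^2).
\]
The term $k_n\|\mathbf w\|_\infty^2$ is the delicate point. Since $\|\mathbf w\|_\infty^2\le\|\mathbf w\|^2\le C$ is too crude when $k_n\to\infty$, I would instead bound $\mathbb E\sum_i w_i^4$ directly.

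\textbf{Main obstacle and recursion.} The coordinates $w_i$ are the regression coefficients of $\mathbf 1$ on the rows, and a leave-one-row identity expresses $w_i$ through a quadratic form in the $i$th row against a $(d-1)$-row projection. This gives $w_i\approx(\mathbf r_i^\top\mathbf J_{-i}\mathbf 1)/(\mathbf r_i^\top\mathbf J_{-i}\mathbf r_i)$ with denominator $\asymp N$ and numerator of order $\|\mathbf J_{-i}\mathbf 1\|\le\sqrt N$. I therefore expect $\mathbb E w_i^4\le C(1+\max_l\mathbb E|t_l|^4)/N^2$, once the numerator's fourth moment is controlled by the fourth moments of the coordinates of $\mathbf J_{-i}\mathbf 1$, which are the $t$'s of a smaller matrix. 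This suggests an induction on $d$: define $\Theta_d=\sup\max_j\mathbb E|t_j|^4$ over all admissible $(d,N)$ and row laws, and derive
\[
\Theta_d\le C+C\frac{k_n}{N}\Theta_{d-1}.
\]
Because $k_n/N\to0$, this recursion closes uniformly in $d$, giving $\Theta_d\le 2C$ for large $n$. Uniformity over all row-law choices and all $(d,N)\in\mathcal I_n$ is kept throughout by using only constants from \eqref{eq:anticoncentration}, the fourth-moment bound $k_n$, and \eqref{eq:projection-moment-assumptions}. The exceptional events $\|\mathbf M\|>C/N$ contribute at most $N^2e^{-cN}$, by the deterministic bound $|t_j|\le\sqrt N$. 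Making this leave-one-row recursion precise is the step I expect to be hardest, in particular handling the nonzero mean in class (ii).

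\textbf{Variance \eqref{lp:eq:noncentered-variance}.} Expose the rows one at a time and use the martingale (Efron--Stein) decomposition. By the recursion \eqref{eq:five-halves-projection-recursion}, replacing row $i$ changes $R$ by at most the difference of two decrements of the form $(\mathbf t^\top\mathbf z)^2/(\mathbf z^\top\mathbf J\mathbf z)$, where $\mathbf J$ is the projection of the other $d-1$ rows. It therefore suffices to show that the second moment of such a decrement is $O(1)$. On $\mathbf z^\top\mathbf J\mathbf z\ge cN$ (Lemma~\ref{lem:lower}) the decrement is at most $C(\mathbf t^\top\mathbf z)^2/N$, and
\[
\mathbb E(\mathbf t^\top\mathbf z)^4\le C\bigl(\|\mathbf t\|^4(1+n\mu_n^2)^2+k_n\textstyle\sum_j t_j^4\bigr)\le CN^2,
\]
using $\|\mathbf t\|^2\le N$ together with \eqref{sym:eq:t-fourth}, which gives $\mathbb E\sum_j t_j^4\le CN$, and $k_n=o(n)$. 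On the complementary event the decrement is at most $N$ and the event has probability $e^{-cN}$. Summing the $d$ Efron--Stein terms yields $\Var R\le Cd$.
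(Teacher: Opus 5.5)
Your proposal is correct and follows essentially the paper's route. You use the Sherman--Morrison leave-one-column identity for $t_j$, and the exact Schur-complement identity $w_i=\mathbf z^\top\mathbf J_0\mathbf 1_{N-1}/\mathbf z^\top\mathbf J_0\mathbf z$, whose denominator is $1/M_{ii}\ge cN$ on the good event; this feeds a recursion from $(d,N)$ to $(d-1,N-1)$. For the variance you use Efron--Stein with the second-moment bound on the row decrement.

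The only difference is cosmetic. Absorbing $k_nN\Theta_{d-1}$ into $N^2\Theta_{d-1}$ gives you the coefficient $Ck_n/N$ instead of the paper's $C(k_n/N)^2$, and this closes equally well. The nonzero mean in class (ii), which you flag as the hard step, is harmless: $|\mu_n\mathbf 1^\top\mathbf t^-|=|\mu_n|\,\|\mathbf t^-\|^2\le|\mu_n|N\le C\sqrt N$.
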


\begin{proof}
Let $T_{d,N}=\sup_{\text{row-law choices}}\max_{1\le j\le N}
\mathbb{E}|t_j|^4$. Notice that $\mathbf{P}(\mathbf{B})=\mathbf{I}_N$ for $d=0$, which gives $t_j=1$. All
dimensions in $\mathcal{I}_n$ satisfy $N\ge m\asymp n$. Moreover,
\[
 \frac{d}{N}\le\frac{p}{n}\le\gamma_0,
 \qquad
 \frac{d}{N-1}\le\frac{p}{n-1}\le\gamma_1<1
\]
for some fixed $\gamma_1$ and all sufficiently large $n$. Therefore
Lemma~\ref{lem:lower} applies to these matrices and their one-column
deletions. In the first class we use the centered independent-entry
estimate \eqref{sym:eq:hybrid-smallest}; in the second class we use
the i.i.d. estimate \eqref{lt:eq:external-i.i.d.-inverse}, applied
to the transpose of the matrix. In both cases, we have
\begin{equation}\label{eq:projection-deleted-inverse}
 \mathbb{P}\{\lambda_{\min}(\mathbf{B}_{-j}
                    \mathbf{B}_{-j}^\top)<cN\}
 \le Ce^{-cN}.
\end{equation}
For $d\ge1$, remove the $j$-th column $\mathbf{b}_j$ and set
\[
 \mathbf{M}=(\mathbf{B}_{-j}\mathbf{B}_{-j}^\top)^{-1},\qquad
 \mathbf{v}=\mathbf{M}\mathbf{B}_{-j}\mathbf{1}_{N-1}.
\]
On the event
$E=\{\lambda_{\min}(\mathbf{B}_{-j}\mathbf{B}_{-j}^\top)\ge cN\}$,
we have $\|\mathbf{M}\|\le C/N$ and
\[
 \|\mathbf{v}\|^2
 \le\|\mathbf{M}\|\,
 \mathbf{1}_{N-1}^\top\mathbf{B}_{-j}^\top
       \mathbf{M}\mathbf{B}_{-j}\mathbf{1}_{N-1}
 \le C.
\]
For the independent deleted column $\mathbf{b}_j$,
the Sherman--Morrison formula gives
\begin{equation}\label{eq:projection-leave-column}
 t_j=\frac{1-\mathbf{b}_j^\top\mathbf{v}}
             {1+\mathbf{b}_j^\top\mathbf{M}\mathbf{b}_j}.
\end{equation}

For the rest of the proof, set $\mu_n=0$ and $\omega_n^2=1$ in
the first class, which also satisfies \eqref{eq:projection-moment-assumptions}. The coordinates of $\mathbf{b}_j$ may then have
different laws, but they have a common mean and variance and their
centered fourth moments are bounded by $k_n$. For a deterministic
vector $\mathbf{v}$, independence and centering give
\begin{align*}
 \mathbb{E}\{(\mathbf{b}_j-\mu_n\mathbf{1}_d)^\top
                                      \mathbf{v}\}^4
 =3\omega_n^4\|\mathbf{v}\|^4
   +\sum_{i=1}^d\big\{
      \mathbb{E}(b_{ij}-\mu_n)^4-3\omega_n^4\big\}v_i^4\le C\|\mathbf{v}\|^4+Ck_n\sum_{i=1}^d v_i^4.
\end{align*}
Consequently, in view of \eqref{eq:projection-moment-assumptions} and $|x+y|^4\le8|x|^4+8|y|^4$,
\begin{align}\label{eq:fourthmomentbv}
 \mathbb{E}(\mathbf{b}_j^\top\mathbf{v})^4
 \le C\|\mathbf{v}\|^4+Ck_n\sum_{i=1}^d v_i^4
       +8\mu_n^4(\mathbf{1}_d^\top\mathbf{v})^4\le C\|\mathbf{v}\|^4+Ck_n\sum_{i=1}^d v_i^4,
\end{align}
since $\mu_n^4d^2\le(n\mu_n^2)^2\le C$. Using $|t_j|^4\le N^2$ and
\eqref{eq:projection-deleted-inverse}, we obtain 
\begin{equation}\label{eq:projection-fourth-recursion-first}
 T_{d,N}\le C+Ck_n\sum_{i=1}^d
             \mathbb{E}[v_i^4\mathbbm{1}_E]+CN^2e^{-cN}.
\end{equation}

Remove the $i$-th row $\mathbf{z}^\top$ from $\mathbf{B}_{-j}$, let
$\mathbf{J}_0$ project onto the null space of the remaining rows. The Schur complement
identity gives
\[
 v_i=\frac{\mathbf{z}^\top\mathbf{J}_0\mathbf{1}_{N-1}}
           {\mathbf{z}^\top\mathbf{J}_0\mathbf{z}}.
\]
On $E$, we have
$\mathbf{z}^\top\mathbf{J}_0\mathbf{z}=1/M_{ii}\ge cN$.
Set $\mathbf{t}^-=\mathbf{J}_0\mathbf{1}_{N-1}$. The row
$\mathbf{z}$ is independent of $\mathbf{J}_0$, and the projection
property gives
\[
 \|\mathbf{t}^-\|^2
 =\mathbf{1}_{N-1}^\top\mathbf{t}^-\le N-1.
\]
A similar argument for \eqref{eq:fourthmomentbv}, conditioning on $\mathbf{J}_0$, yields
\begin{align*}
 \mathbb{E}(\mathbf{z}^\top\mathbf{t}^-)^4
 \le CN^2+Ck_n\sum_{j=1}^{N-1}\mathbb{E}|t_j^-|^4
       +8\mu_n^4(N-1)^4\le CN^2+Ck_n\sum_{j=1}^{N-1}\mathbb{E}|t_j^-|^4.
\end{align*}
The pair $(d-1,N-1)$ is still admissible and
the remaining rows belong to the same permitted class. Thus, we have
\[
 \mathbb{E}[v_i^4\mathbbm{1}_E]
 \le\frac C{N^2}+\frac{Ck_n}{N^3}T_{d-1,N-1}.
\]
Substitution into \eqref{eq:projection-fourth-recursion-first}
gives
\begin{equation}\label{eq:projection-fourth-recursion}
 T_{d,N}\le C+C\frac{k_n}{N}
       +C\left(\frac{k_n}{N}\right)^2T_{d-1,N-1}
       +CN^2e^{-cN}.
\end{equation}
Since $N\ge m\asymp n$ and $k_n=o(n)$, the coefficient of
$T_{d-1,N-1}$ is at most $1/2$ for all sufficiently large $n$,
uniformly over the permitted dimensions and row laws. Iterating to
$T_{0,N-d}=1$ gives \eqref{sym:eq:t-fourth} as desired.

For the variance estimation \eqref{lp:eq:noncentered-variance}, remove the $i$-th row from $\mathbf{B}$ and let
$\mathbf{J}$ be the null-space projection determined by its other
$d-1$ rows. By \eqref{eq:five-halves-projection-recursion},
adding an independent row $\mathbf{z}^\top$ decreases
the directional statistic $\mathbf{1}_N^\top\mathbf{P}(\mathbf{B})\mathbf{1}_N$ by
\[
 D(\mathbf{J},\mathbf{z})
 =\frac{(\mathbf{z}^\top\mathbf{J}\mathbf{1}_N)^2}
        {\mathbf{z}^\top\mathbf{J}\mathbf{z}},\qquad
 0\le D(\mathbf{J},\mathbf{z})\le N,
\]
with value zero when $\mathbf{z}^\top\mathbf{J}\mathbf{z}=0$. The rank of
$\mathbf{J}$ is at least $N-d+1\ge m+1\asymp n$. Thus
Lemma~\ref{lem:lower} gives
\[
 \mathbb{P}\{\mathbf{z}^\top\mathbf{J}\mathbf{z}<cN
                   \mid\mathbf{J}\}\le Ce^{-cN}.
\]
With a similar argument for \eqref{eq:fourthmomentbv} and the bound for $T_{d-1,N}$ above, we obtain
\begin{align*}
 \mathbb{E}D(\mathbf{J},\mathbf{z})^2
 \le\frac C{N^2}\big(N^2+k_n\sum_{j=1}^N
 \mathbb{E}|\{\mathbf{J}\mathbf{1}_N\}_j|^4\big)
       +CN^2e^{-cN}
 \le C+C\frac{k_nT_{d-1,N}}{N}+CN^2e^{-cN}\le C.
\end{align*}
Finally, let $R^{(i)}$ be obtained from
$R=\mathbf{1}_N^\top\mathbf{P}(\mathbf{B})\mathbf{1}_N$
by replacing row $i$ with an independent copy having the same law.
Conditioning on the other rows, the difference $R-R^{(i)}$ is the
difference of two such decrements. Therefore
$\mathbb{E}(R-R^{(i)})^2\le C$, uniformly in $i$. Applying the Efron--Stein inequality
to the $d$ independent rows gives
\[
 \Var(R)\le\frac12\sum_{i=1}^d
                   \mathbb{E}(R-R^{(i)})^2\le Cd,
\]
which implies \eqref{lp:eq:noncentered-variance} and completes the proof of this lemma.
\end{proof}

\section{Technical details of Examples~\ref{thm:regular-boundary} and \ref{rem:balanced-tails}}\label{sec:verification}

\subsection{Verification for Example \ref{thm:regular-boundary}}
\label{lp:subsec:bias}
Throughout this subsection, suppose that the law of $\xi$ satisfies \eqref{eq:Pareto} with $2<\alpha\le5/2$. 
It is bounded below and satisfies
\begin{equation}\label{lp:eq:pareto-tail}
 \mathbb{P}(\xi>x)\sim c_\xi x^{-\alpha},\qquad
 c_\xi=\sigma_\alpha^{-\alpha}>0.
\end{equation}
Choose $\beta$ such that
\begin{equation}\label{lp:eq:beta-window}
 \frac{\alpha-1}{\alpha}<\beta<\frac1{4-\alpha},
\end{equation}
and let the truncation level $\tau_n=n^\beta$.
It follows that $\beta>1/2$. Let $\zeta_n$ have the conditional law
\begin{equation}\label{lp:eq:conditional-law}
 \mathcal{L}(\zeta_n)
 =\mathcal{L}(\xi\mid |\xi|\le\tau_n).
\end{equation}
Denote $q_n=\mathbb{P}(|\xi|>\tau_n)$ and
$\mu_n=\mathbb{E}\zeta_n$, $\omega_n^2=\Var(\zeta_n)$ and $k_n=\mathbb{E}(\zeta_n-\mu_n)^4$, as in case~(ii) of Lemma~\ref{lem:projection-moments}. Since the negative part of $\xi$ is bounded, all removed entries are positive for large $n$. Similar to Lemma \ref{lem:five-halves-truncation}, tail
integration gives
\begin{equation}\label{lp:eq:conditional-moments}
 \begin{gathered}
 q_n\sim c_\xi\tau_n^{-\alpha},\qquad
 \mu_n=-\frac{\mathbb{E}[\xi\mathbbm{1}_{\{\xi>\tau_n\}}]}{1-q_n}
 \sim-\frac{\alpha c_\xi}{\alpha-1}\tau_n^{1-\alpha},\\
 \operatorname{Var}(\zeta_n)\rightarrow 1,\qquad
 \mathbb{E}(\zeta_n-\mu_n)^4=O(\tau_n^{4-\alpha})=o(n),
 \qquad n\mu_n^2\rightarrow 0.
 \end{gathered}
\end{equation}
where the last statement follows from $\beta>1/2$ and $\alpha>2$. It is straightforward that the laws in
\eqref{lp:eq:conditional-law} converge in total variation to the fixed law. They are non-atomic and obey a uniform small-ball condition \eqref{eq:anticoncentration} in Lemma \ref{lem:lower}.
With the notation above, the following lemma shows that there exists a directional bias of $R(\mathbf{Z})$, where $\mathbf{Z}$ is a $d\times n$ i.i.d. matrix with the entry law
\eqref{lp:eq:conditional-law}.

\begin{lemma}
\label{lp:lem:conditional-bias}
Let $\mathbf{Z}$ be a $d\times n$ i.i.d. matrix with the entry law
\eqref{lp:eq:conditional-law}, where
$\gamma_-n\le d\le\gamma_+n$ for fixed
$0<\gamma_-<\gamma_+<1$. Then, there exists a constant $c>0$ such that, uniformly over these dimensions,
\begin{equation}\label{lp:eq:conditional-bias}
 B_{n,d}:=n-d-\mathbb{E}R(\mathbf{Z})
 \ge c n^{3-\alpha}.
\end{equation}
\end{lemma}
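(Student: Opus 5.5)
Write $\mathbf{L}=\mathbf{I}_n-\mathbf{P}(\mathbf{Z})$. By Lemma~\ref{lem:rank} and non-atomicity, $\operatorname{rank}\mathbf{Z}=d$ almost surely, so $\tr\mathbf{L}=d$. Column exchangeability then gives
\[
B_{n,d}=n-d-\mathbb{E}R(\mathbf{Z})=\mathbb{E}\,\mathbf{1}^\top\mathbf{L}\mathbf{1}-d=n(n-1)\mathbb{E}L_{12}.
\]
It therefore suffices to show $\mathbb{E}L_{12}\ge cn^{1-\alpha}$.

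The plan is to condition on the other $n-2$ columns and apply a two-column Woodbury identity. Write $\mathbf{b}_1,\mathbf{b}_2$ for columns $1,2$, $\mathbf{M}=(\mathbf{Z}_{-12}\mathbf{Z}_{-12}^\top)^{-1}$, and $g_{ij}=\mathbf{b}_i^\top\mathbf{M}\mathbf{b}_j$. With $\mathbf{G}=(g_{ij})_{i,j\le2}$, the $2\times2$ block of $\mathbf{L}$ is $\mathbf{G}(\mathbf{I}+\mathbf{G})^{-1}$, so
\[
L_{12}=\frac{g_{12}}{(1+g_{11})(1+g_{22})-g_{12}^2}.
\]
Next decompose $\mathbf{b}_i=\mu_n\mathbf{1}_d+\mathbf{w}_i$ with $\mathbf{w}_i$ centered. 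Then
\[
g_{12}=\mu_n^2\mathbf{1}^\top\mathbf{M}\mathbf{1}+\mu_n\mathbf{1}^\top\mathbf{M}(\mathbf{w}_1+\mathbf{w}_2)+\mathbf{w}_1^\top\mathbf{M}\mathbf{w}_2.
\]
The first term is the positive square $\mu_n^2\|\mathbf{M}^{1/2}\mathbf{1}\|^2$.

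The idea is to expand $L_{12}$ in the small quantities $g_{12}$, $g_{ii}-\tr\mathbf{M}$ and to take conditional expectations over the two independent columns. Two contributions survive, and both come out as nonnegative squares.

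\begin{enumerate}
\item[(a)] The mean part $\mu_n^2\mathbf{1}^\top\mathbf{M}\mathbf{1}/(1+g)^2$, where $g\approx\tr\mathbf{M}\asymp1$. This is of order $\mu_n^2\,d/n\asymp\tau_n^{2-2\alpha}$.
\item[(b)] The cross term between $\mu_n\mathbf{1}^\top\mathbf{M}\mathbf{w}_i$ in the numerator and the denominator fluctuation $\mathbf{w}_i^\top\mathbf{M}\mathbf{w}_i$. Its expectation involves third moments $\mathbb{E}(\zeta_n-\mu_n)^3$. Working it out, this gives (up to sign conventions, which I expect to be positive via the identity in the abstract) a term $\mu_3\mu_n\sum_k(\mathbf{M}\mathbf{1})_kM_{kk}\cdot(\ldots)$.
\end{enumerate}

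The cleanest route is probably to avoid the expansion in (b) altogether. Instead I would write $\mathbb{E}[L_{12}\mid\mathbf{M}]$ as $\mathbb{E}_{\mathbf{b}_1,\mathbf{b}_2}$ of a rational function, and by exchangeability and symmetrization over the pair $(\mathbf{b}_1,\mathbf{b}_2)$ exhibit it as $\mathbb{E}\|\mathbf{h}\|^2$ for some vector $\mathbf{h}=\mathbb{E}[\mathbf{M}^{1/2}\mathbf{b}\,\phi(\mathbf{b})\mid\mathbf{M}]$. For this, use $\mathbf{b}_1$, $\mathbf{b}_2$ independent with $L_{12}=\mathbf{b}_1^\top\mathbf{N}\mathbf{b}_2$-type forms after Schur complementation. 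Concretely, I would aim for
\[
\mathbb{E}[L_{12}\mid\mathbf{M}]\gtrsim\mathbb{E}\Bigl\|\mathbb{E}\Bigl[\frac{\mathbf{M}^{1/2}\mathbf{b}}{1+\mathbf{b}^\top\mathbf{M}\mathbf{b}}\Bigm|\mathbf{M}\Bigr]\Bigr\|^2-\text{(error)}.
\]
This is the positivity referred to in Section~\ref{sec:strategy}. To leading order, decoupling the two columns gives
\[
\mathbb{E}[L_{12}\mid\mathbf{M}]\approx\|\mathbf{m}\|^2,\qquad \mathbf{m}=\mathbb{E}\Bigl[\frac{\mathbf{M}^{1/2}\mathbf{b}}{1+\mathbf{b}^\top\mathbf{M}\mathbf{b}}\Bigm|\mathbf{M}\Bigr],
\]
minus corrections from $g_{12}^2$ and correlations. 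Those corrections are of smaller order because $\|\mathbf{M}\|\le C/n$ on the event of Lemma~\ref{lem:lower}, and the complement has probability $e^{-cn}$. Fourth moments of $t_j$ are controlled by Lemma~\ref{lem:projection-moments}(ii), which applies since \eqref{lp:eq:conditional-moments} verifies \eqref{eq:projection-moment-assumptions}.

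The key lower bound is then $\|\mathbf{m}\|\gtrsim n^{1/2-\alpha/2}$, and it comes from the big entries. The vector $\mathbb{E}[\mathbf{b}/(1+\mathbf{b}^\top\mathbf{M}\mathbf{b})]$ differs from $\mathbb{E}\mathbf{b}=\mu_n\mathbf{1}$ through the denominator. Consider a single coordinate $b_k=x$ of size $x\asymp\sqrt n$ (so $M_{kk}x^2\asymp1$). Against the bulk value $1+\tr\mathbf{M}$, the denominator is changed by an amount of order one, which reduces that coordinate's weight. Since the Pareto tail sits only on the positive side, this produces a \emph{negative} signed shift of order
\[
\mathbb{E}\bigl[\xi\,\mathbbm{1}_{\{\xi\asymp\sqrt n\}}\bigr]\asymp n^{(1-\alpha)/2}
\]
per coordinate. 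The truncation window \eqref{lp:eq:beta-window} ensures $\tau_n\gg\sqrt n$, so these entries are retained. It also ensures that the mean shift $\mu_n\asymp\tau_n^{1-\alpha}$ is $o(n^{(1-\alpha)/2})$, so it cannot cancel the shift, and the conditional fourth-moment error $k_n/n$ stays negligible.

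Writing $\mathbf{m}\approx-c\,n^{(1-\alpha)/2}\mathbf{M}^{1/2}\mathbf{1}+\mu_n\mathbf{M}^{1/2}\mathbf{1}/(1+\tr\mathbf{M})$ then gives
\[
\|\mathbf{m}\|^2\gtrsim n^{1-\alpha}\,\mathbf{1}^\top\mathbf{M}\mathbf{1}.
\]
Here $\mathbf{1}^\top\mathbf{M}\mathbf{1}\ge d/\lambda_{\max}\gtrsim1$ with high probability; the bound $\lambda_{\max}\le Cn$ holds with high probability by a second-moment trace bound. Multiplying by $n(n-1)$ yields $B_{n,d}\ge cn^{3-\alpha}$.

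The hardest step is establishing the clean nonnegative-square representation with errors genuinely $o(n^{1-\alpha})$. The errors include the $g_{12}^2$ term, the coupling of the two denominators, and the dependence of each coordinate's denominator on the others. My first attempt would isolate one coordinate $k$ at a time, conditioning on $\mathbf{b}$ minus its $k$-th coordinate (a leave-one-coordinate Sherman--Morrison in the quadratic form), and use the lower-tail bound $\mathbf{b}^\top\mathbf{M}\mathbf{b}\ge c$ to make all expansions uniform. The uniformity over $d$ is then automatic from the constants in Lemma~\ref{lem:lower}.
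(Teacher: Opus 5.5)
Your reduction $B_{n,d}=n(n-1)\mathbb{E}L_{12}$, the two-column Woodbury formula, and your mechanism for the negative drift match the paper. That mechanism concerns $\mathbf v=\mathbb{E}_{\mathbf b}[\mathbf b/(1+\mathbf b^\top\mathbf M\mathbf b)]$: entries of size $\sqrt n$, isolated by a leave-one-coordinate split, give $v_i\le -c\,n^{(3-\alpha)/2}M_{ii}+C/n$.

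The gap is the step where you replace $\mathbb{E}[L_{12}\mid\mathbf M]$ by $\|\mathbf m\|^2=\mathbf v^\top\mathbf M\mathbf v$ and claim the corrections from $g_{12}^2$ are of smaller order because $\|\mathbf M\|\le C/n$. They are not. Put $P=(1+g_{11})(1+g_{22})$; the correction is $L_{12}-g_{12}/P=g_{12}^3/\{P(P-g_{12}^2)\}$. The bias comes from the event that columns $1$ and $2$ both have an entry in $[\sqrt n/2,\sqrt n]$ in a common row $k$. This event has probability $\asymp d\,n^{-\alpha}\asymp n^{1-\alpha}$. On it, $g_{12}\approx M_{kk}b_{1k}b_{2k}$, $g_{11}$ and $g_{22}$ are all of order one, so the correction is of order one as well. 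Hence $\mathbb{E}|L_{12}-g_{12}/P|\gtrsim n^{1-\alpha}$, the same order as your main term $\mathbf v^\top\mathbf M\mathbf v\asymp n^{1-\alpha}$. No size estimate can make these errors $o(n^{1-\alpha})$; the step you flag as hardest cannot be done that way. You need their sign, and the proposal only hopes for it.

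The missing idea is that the sign is exact. Since $g_{12}^2\le g_{11}g_{22}<P$, you may expand $L_{12}=\sum_{l\ge0}g_{12}^{2l+1}/P^{l+1}$. The absolute series is at most $\sqrt{g_{11}g_{22}}/(1+g_{11}+g_{22})\le 1/2$, so expectation and summation commute. Moreover $g_{12}^{2l+1}=\langle(\mathbf M^{1/2}\mathbf b_1)^{\otimes(2l+1)},(\mathbf M^{1/2}\mathbf b_2)^{\otimes(2l+1)}\rangle$, and $P^{l+1}$ factorizes over the two columns, which are i.i.d.\ and independent of $\mathbf M$. Thus the conditional expectation of the $l$-th term is $\|\mathbb{E}_{\mathbf b}h_l(\mathbf b)\|^2\ge0$, where $h_l(\mathbf b)=(\mathbf M^{1/2}\mathbf b)^{\otimes(2l+1)}/(1+\mathbf b^\top\mathbf M\mathbf b)^{l+1}$. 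Discarding $l\ge1$ gives $\mathbb{E}[L_{12}\mid\mathbf M]\ge\mathbf v^\top\mathbf M\mathbf v$ with no error term.

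Your final lower bound also needs one repair. The claim $\lambda_{\max}(\mathbf M^{-1})\le Cn$ with high probability is false: $\lambda_{\max}(\mathbf M^{-1})\ge\max_{i,j\ge3}Z_{ij}^2$, and among $\asymp n^2$ entries some exceed $\tau_n/2\gg\sqrt n$ with high probability. Use instead $\mathbf v^\top\mathbf M\mathbf v\ge(\mathbf 1^\top\mathbf v)^2/(\mathbf 1^\top\mathbf M^{-1}\mathbf 1)$, with $\mathbb{E}\,\mathbf 1^\top\mathbf M^{-1}\mathbf 1\le Cdn$ and Markov's inequality. Work on an event of probability at least $1/2$ that also guarantees $(\mathbf M^{-1})_{ii}\le 2n$, hence $M_{ii}\ge(2n)^{-1}$, for at least $d/2$ indices. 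On that event, summing your coordinatewise bound gives $-\mathbf 1^\top\mathbf v\ge c\,d\,n^{(1-\alpha)/2}$ without needing every coordinate to drift.
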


\begin{proof}
Denote the columns of $\mathbf{Z}$ as
$\mathbf{z}_1,\ldots,\mathbf{z}_n$, and set
\[
 \mathbf{A}=\sum_{j=3}^n\mathbf{z}_j\mathbf{z}_j^\top,
 \qquad \mathbf{M}=\mathbf{A}^{-1}.
\]
The matrices have full row rank almost surely by Lemma \ref{lem:rank}. Let $\mathbf{L}=\mathbf{Z}^{\top}(\mathbf{Z}\mathbf{Z}^{\top})^{-1}\mathbf{Z}$. Conditional on
$\mathbf{M}$, let $\mathbf{x}=\mathbf{z}_1$, $\mathbf{y}=\mathbf{z}_2$,
$q_x=\mathbf{x}^\top\mathbf{M}\mathbf{x}$, $q_y=\mathbf{y}^\top\mathbf{M}\mathbf{y}$ and $q_{xy}=\mathbf{x}^\top\mathbf{M}\mathbf{y}$. The Cauchy--Schwarz inequality gives $q_{xy}^2\le q_xq_y$, which further implies $|L_{12}|\le1/2$. 
The Woodbury formula gives
\[
 L_{12}=\frac{q_{xy}}{(1+q_x)(1+q_y)-q_{xy}^2}
 =\sum_{l\ge0}\frac{q_{xy}^{2l+1}}{(1+q_x)^{l+1}(1+q_y)^{l+1}}.
\]
For
$h_l(\mathbf{x})=(\mathbf{M}^{1/2}\mathbf{x})^{\otimes(2l+1)}
/(1+\mathbf{x}^\top\mathbf{M}\mathbf{x})^{l+1}$,
the conditional expectation of the $l$-th summand is given by
$\|\mathbb{E}_{\mathbf{x}}h_l(\mathbf{x})\|^2$. In particular, we have
\begin{equation}\label{lp:eq:positive-identity}
 \mathbb{E}(L_{12}\mid\mathbf{M})
 \ge\mathbf{v}^\top\mathbf{M}\mathbf{v}\ge0,
 \qquad
 \mathbf{v}=:\mathbf{v}(\mathbf{M})
 =\mathbb{E}_{\mathbf{x}}\frac{\mathbf{x}}{1+\mathbf{x}^\top\mathbf{M}\mathbf{x}}.
\end{equation}
By the column exchangeability and $\operatorname{tr}\mathbf{L}=d$, we get
\begin{equation}\label{lp:eq:bias-trace}
 B_{n,d}=n(n-1)\mathbb{E} L_{12}.
\end{equation}
Set $\ell_n=n^{(3-\alpha)/2}$. For any fixed $C_1>0$, uniformly over
$0<t\le C_1/n$, we have
\begin{equation}\label{lp:eq:conditional-H}
 H_n(t):=\mathbb{E}\frac{\zeta_n}{1+t\zeta_n^2}
 \le-c\ell_n t.
\end{equation}
To see this, recalling \eqref{lp:eq:conditional-moments}, let $M_0$ bound the negative part of $\xi$ and notice the identity
\[
 H_n(t)=\mu_n-t\mathbb{E}\frac{\zeta_n^3}{1+t\zeta_n^2},
 \qquad \mu_n\le0.
\]
Therefore
\begin{align*}
 \mathbb{E}\frac{\zeta_n^3}{1+t\zeta_n^2}\ge\frac{n^{3/2}}{8(1+C_1)}
       \mathbb{P}\{\sqrt n/2\le\zeta_n\le\sqrt n\}-M_0^3\ge c\ell_n
\end{align*}
for large $n$, since $\alpha<3$ and $\tau_n/\sqrt n\to\infty$, which further gives \eqref{lp:eq:conditional-H}.

Now fix a positive definite $\mathbf{M}$ with $\|\mathbf{M}\|\le C_1/n$.
For coordinate $i$, write
\[
 1+\mathbf{x}^\top\mathbf{M}\mathbf{x}
 =D_i+a_ix_i^2+2r_ix_i,
 \quad a_i=M_{ii},\quad
 D_i=1+\mathbf{x}_{-i}^\top\mathbf{M}_{-i,-i}\mathbf{x}_{-i},
 \quad r_i=\sum_{j\ne i}M_{ij}x_j.
\]
Since $x_i$ and $r_i$
are independent, $\mathbb{E}x_i^2\le C$, and
$\sqrt n|\mu_n|\le C$, we have
\begin{align*}
 \left|v_i(\mathbf{M})-\mathbb{E}\frac{x_i}{D_i+a_ix_i^2}\right|
 \le 2\mathbb{E}|r_i|\,\mathbb{E}x_i^2\le C\left\{\left(\sum_{j\ne i}M_{ij}^2\right)^{1/2}
       +|\mu_n|\left|\sum_{j\ne i}M_{ij}\right|\right\}
 \le C/n
\end{align*}
and
$\mathbb{E}D_i=1+\omega_n^2\operatorname{tr}\mathbf{M}_{-i,-i}
+\mu_n^2\mathbf{1}^\top\mathbf{M}_{-i,-i}\mathbf{1}\le C$.
Using \eqref{lp:eq:conditional-H} and Jensen's inequality gives
\begin{equation}\label{lp:eq:conditional-v}
 v_i(\mathbf{M})
 \le-c\ell_n M_{ii}\mathbb{E}D_i^{-2}+C/n
 \le-c\ell_n M_{ii}+C/n.
\end{equation}
Next, choose sufficiently large fixed constants $C_1,C_2$ and consider the event
\[
 \mathcal{H}_n=\left\{
 \|\mathbf{M}\|\le C_1/n,\quad
 \#\{i:A_{ii}\le2n\}\ge d/2,\quad
 \mathbf{1}_d^\top\mathbf{A}\mathbf{1}_d\le C_2dn
 \right\}.
\]
By Lemma \ref{lem:lower}, we get $\mathbb{P}(\|\mathbf{M}\|\le C_1/n)\ge 1-Ce^{-cn}$. 
By \eqref{lp:eq:conditional-moments},
$\mathbb{E}A_{ii}=(n-2)\mathbb{E}\zeta_n^2\sim n$ and
$\operatorname{Var}(A_{ii})\le Cn(k_n+1)$, which together with Chebyshev's inequality gives 
\[
\mathbb{P}(\#\{i:A_{ii}\le2n\}< d/2)=O(k_n/n)=o(1).
\]
Moreover, notice that \[
\mathbb{E}(\mathbf{1}_d^\top\mathbf{A}\mathbf{1}_d)=(n-2)(d\omega_n^2+d^2\mu_n^2)\le Cdn,
\]
which implies $\mathbb{P}(\mathbf{1}_d^\top\mathbf{A}\mathbf{1}_d\ge C_2dn)\le 1/4$ by setting $C_2$ large enough. Thus $\mathbb{P}(\mathcal{H}_n)\ge1/2$ for
large $n$. On $\mathcal{H}_n$, we have $M_{ii}\ge A_{ii}^{-1}$ and
$\sum_iM_{ii}\ge d/(4n)$. Summing
\eqref{lp:eq:conditional-v}, and using $\ell_n\to\infty$, gives
$-\mathbf{1}_d^\top\mathbf{v}\ge c d\ell_n/n$. Thus, applying the Cauchy--Schwarz inequality yields
\[
 \mathbf{v}^\top\mathbf{M}\mathbf{v}
 \ge\frac{(\mathbf{1}_d^\top\mathbf{v})^2}
          {\mathbf{1}_d^\top\mathbf{A}\mathbf{1}_d}
 \ge c\frac{d\ell_n^2}{n^3}
\]
for some constant $c>0$. 
In view of 
\eqref{lp:eq:positive-identity} and \eqref{lp:eq:bias-trace}, we get $B_{n,d}\ge c(d/n)\ell_n^2\ge c n^{3-\alpha}$ as desired.
\end{proof}

Now, we proceed to show \eqref{eq:regular-boundary-separation} in Example \ref{thm:regular-boundary}. Recalling \eqref{lp:eq:pareto-tail} and \eqref{lp:eq:beta-window}. We call a row of $\mathbf{X}$ good if
all its $n$ entries have absolute value at most $\tau_n$; a bad row if not. Let $K_n$
be the number of bad rows and let $\mathbf{X}_{\rm good}$ contain all
good rows, in their original order. Recalling \eqref{lp:eq:conditional-moments}, a union bound gives
\begin{equation}\label{lp:eq:deleted-row-count}
\mathbb{E}K_n\le pnq_n
=O(n^{2-\alpha\beta})=o(n^{3-\alpha})
\end{equation}
since $\beta>(\alpha-1)/\alpha$.  
Condition on the vector of good/bad row indicators. The retained rows
are independent, and all their entries have the conditional law
\eqref{lp:eq:conditional-law}. Define the event
\[
\mathcal{K}=\{p-K_n\ge\gamma n/2\},
\]
Since $2<\alpha<3$, \eqref{lp:eq:deleted-row-count} gives $\mathbb{P}(\mathcal{K})\to1$. On $\mathcal{K}$, Lemmas~\ref{lem:projection-moments} and \ref{lp:lem:conditional-bias} apply uniformly and give
\begin{equation}\label{lp:eq:retained-mean-variance}
\begin{gathered}
\mathbb{E}[R(\mathbf{X}_{\rm good})\mid K_n]
\le n-p+K_n-c n^{3-\alpha},~~
\operatorname{Var}[R(\mathbf{X}_{\rm good})\mid K_n]\le Cn.
\end{gathered}
\end{equation}
Since deleting rows enlarges the null space, we get
\begin{equation}\label{lp:eq:row-deletion-order}
 R(\mathbf{X})\le R(\mathbf{X}_{\rm good}).
\end{equation}
Suppose first that $2<\alpha<5/2$. Applying conditional Chebyshev's inequality to
\eqref{lp:eq:retained-mean-variance} gives
\[
 \mathbb{P}\left\{
 R(\mathbf{X}_{\rm good})-(n-p+K_n)>-\frac c2 n^{3-\alpha}
 \mid K_n\right\}
 \le C n^{2\alpha-5}=o(1)
\]
on $\mathcal{K}$. By \eqref{lp:eq:deleted-row-count},
$K_n=o_{\mathbb{P}}(n^{3-\alpha})$, which together with
\eqref{lp:eq:row-deletion-order} gives
\[
 \mathbb{P}\{R(\mathbf{X})-m\le-\frac c4 n^{3-\alpha}\}\rightarrow 1.
\]
Consequently, $(R-m)/\sqrt n\pto-\infty$.
The Gaussian reference satisfies
\begin{equation}\label{lp:eq:Gaussian-reference}
 \frac{R^{\rm G}-m}{\sqrt n}
 \xrightarrow{d}N(0,2\gamma(1-\gamma)),
\end{equation}
which further implies $\varepsilon_{n,p}=d_{K}(R,R^{\rm G})\rightarrow 1$.

At the critical point $\alpha=5/2$, define
\[
 \mathcal{R}_n=\frac{R(\mathbf{X})-m}{\sqrt n},\qquad
 \mathcal{R}_n^{\rm good}=\frac{R(\mathbf{X}_{\rm good})-(m+K_n)}{\sqrt n}.
\]
Then $\mathcal{R}_n\le \mathcal{R}_n^{\rm good}+K_n/\sqrt n$ and
$\mathbb{E}K_n=o(\sqrt n)$. By \eqref{lp:eq:retained-mean-variance}, on $\mathcal{K}$ there are fixed constants $c,C>0$ such that
\begin{equation}\label{lp:eq:critical-conditional-bounds}
 \mathbb{E}[\mathcal{R}_n^{\rm good}\mid K_n]=-b_{n,K_n},\qquad
 b_{n,K_n}\ge c>0,\qquad
 \operatorname{Var}(\mathcal{R}_n^{\rm good}\mid K_n)\le C.
\end{equation}
Let $h_T(x)=\max(-T,\min(x,T))$ and choose the fixed number $T$ so
that
\begin{equation}\label{lp:eq:clip-choice}
 T\ge\max\{4C/c,\ 2\sqrt C+c\}.
\end{equation}
For any real random variable $Y$ with $\mathbb{E}Y=-b$, $b\ge c$,
and $\operatorname{Var}(Y)\le C$, we have
\begin{equation}\label{lp:eq:clip-mean}
 \mathbb{E}h_T(Y)\le-c/2.
\end{equation}
In fact, if $b\le T/2$, upper clipping decreases the variable, and hence
\[
 \mathbb{E}h_T(Y)
 \le-b+\mathbb{E}(-T-Y)_+
 \le-b+\frac C{T-b}
 \le-c+\frac{2C}{T}\le-c/2,
\]
where we used the elementary fact that $(v-a)_+\le v^2/a$ for $a>0$ with application to
$v=-(Y+b)$ and $a=T-b$. If $b>T/2$, the Lipschitz property gives
\[
 \mathbb{E}h_T(Y)
 \le h_T(-b)+\mathbb{E}|Y+b|
 \le-T/2+\sqrt C\le-c/2,
\]
which gives \eqref{lp:eq:clip-mean} uniformly in $b\ge c$. 

Apply \eqref{lp:eq:clip-mean} conditionally on $K_n$ on $\mathcal{K}$, and use $|h_T|\le T$ on its complement. Since $\mathbb{P}(\mathcal{K}^c)\to0$, we obtain $\mathbb{E}h_T(\mathcal{R}_n^{\rm good})\le-c/2+o(1)$. Monotonicity and the Lipschitz property of $h_T$ now give
\begin{equation}\label{lp:eq:full-clip-mean}
 \mathbb{E}h_T(\mathcal{R}_n)
 \le\mathbb{E}h_T(\mathcal{R}_n^{\rm good})+\mathbb{E}K_n/\sqrt n
 \le-c/2+o(1).
\end{equation}
By \eqref{lp:eq:Gaussian-reference} and symmetry of its limit, we have
\[
 \mathbb{E}h_T\left(\frac{R^{\rm G}-m}{\sqrt n}\right)\rightarrow 0.
\]
Moreover, since $h_T$ is increasing with total variation $2T$, we have 
\[
|\mathbb{E}h_T\left(\frac{R^{\rm G}-m}{\sqrt n}\right)-\mathbb{E}h_T(\mathcal{R}_n)|\le 2T d_K(R(\mathbf{X}),R^{\rm G})=2T\varepsilon_{n,p},
\]
which follows directly by integrating the
difference of the distribution functions over $[-T,T]$ for $h_T$. 
Consequently, we have 
\[
 \liminf_{n\to\infty}\varepsilon_{n,p}\ge\frac{c}{4T}>0,
\]
completing the proof of 
\eqref{eq:regular-boundary-separation}.  \qed

\subsection{Verification for Example \ref{rem:balanced-tails}}
Recalling the definition of $\xi$ in Example \ref{rem:balanced-tails}, define the signed Laplace transform $g(t)=\mathbb{E}[\xi e^{-t\xi^2}]$ for $t\ge 0$. Since the symmetric component's
contribution to the signed Laplace transform vanishes exactly and $\mathbb{E}\eta=0$, Taylor's expansion gives
\begin{equation}\label{lt:eq:balanced-mixture-g}
 g(t)=\frac1{2\sigma}\mathbb{E}[\eta e^{-t\eta^2/\sigma^2}]
 =\frac{t}{\sigma^3}+O(t^2).
\end{equation}
Next, for $12/5<\alpha\le 5/2$, choose
\begin{equation}
 \frac{3}{2\alpha}<\kappa<\frac1{4-\alpha},\qquad
 \tau_n=n^{\kappa},\qquad \epsilon_n=n^{-3}.
 \label{lt:eq:balanced-mixture-cutoff}
\end{equation}
Define
\[
 \widehat\xi_n=\xi\mathbbm{1}_{\{|\xi|\le\tau_n\}}.
\]
For all sufficiently large $n$, $\widehat\xi_n$ is a mixture of a symmetric truncated heavy-tail component and the unchanged bounded centered component. In particular, one has
\begin{equation}
 \mu_n=\mathbb{E}\widehat\xi_n=0,\qquad
 q_n=\mathbb{P}(|\xi|>\tau_n)=O(n^{-\kappa\alpha}),\qquad
 \mathbb{E}\widehat\xi_n^4=O(1+n^{\kappa(4-\alpha)})=o(n).
 \label{lt:eq:balanced-mixture-truncated-moments}
\end{equation}
Let $U_0\sim\operatorname{Unif}[-1,1]$ be independent and define
\[
 s_n^2=\mathbb{E}\widehat\xi_n^2+\epsilon_n^2/3,\qquad
 \zeta_n=\frac{\widehat\xi_n+\epsilon_nU_0}{s_n}.
\]
Then $s_n\to1$, the law of $\zeta_n$ is non-atomic, centered,
and variance one, and $k_n=\mathbb{E}\zeta_n^4=o(n)$.
Weak convergence to the nondegenerate law of $\xi$ gives the uniform small-ball condition \eqref{eq:anticoncentration} in Lemma \ref{lem:lower}.
Similarly, the signed Laplace transform of $\zeta_n$ depends only on the bounded part $H_n=(\eta/\sigma+\epsilon_nU_0)/s_n$, which is given by
\begin{equation}
\left|\mathbb{E}[\zeta_n e^{-t\zeta_n^2}]\right|
=\frac12\left|\mathbb{E}[H_n(e^{-tH_n^2}-1)]\right|
\le\frac t2\mathbb{E}|H_n|^3\le Ct
\label{lt:eq:balanced-mixture-uniform-laplace}
\end{equation}
uniformly
over sufficiently large $n$ and all $0<t\le1$.
In particular, the uniform signed-Laplace bound
\eqref{sr:eq:signed-laplace-bound} holds with
$\omega(t)=Ct^{1/4}$. The laws also satisfy $k_n=o(n)$ and the common concentration bound. Lemma~\ref{sr:lem:row-mean} therefore applies directly under its signed-transform assumptions since only the signed-transform consequence of its tail condition is required, and the row replacement in Section~\ref{sec:lindeberg} gives \eqref{sym:eq:R-clt} and $\mathbb{E}R(\mathbf{Z})=m+o(\sqrt n)$.  

It remains to transfer the conclusion to the fixed law $\xi$. By \eqref{lt:eq:balanced-mixture-cutoff},
$nq_n\to0$ and $n^2q_n=O(n^{2-\kappa\alpha})=o(\sqrt n)$.
Since $\mu_n=0$, \eqref{sat:eq:transfer} in Lemma \ref{sat:lem:truncation} yields
\[
 \mathbb{E}|R(\mathbf{X})-R(\widehat{\mathbf{X}})|
 \le Cn^2q_n+Cn^C e^{-cn}=o(\sqrt n).
\]
The bounded smoothing perturbation changes $R$ in $L^1$ by
at most $C\epsilon_n n^{3/2}+Cne^{-cn}=o(\sqrt n)$,
by Lemma \ref{lem:lower} and
\eqref{lt:eq:fixed-projection-perturbation}. Moreover, noting $\mu_n=0$ and rescaling by $s_n$ leaves the row space unchanged, we have
\[
 \mathbb{E}|R(\mathbf{X})-R(\mathbf{Z})|=o(\sqrt n).
\]
Therefore, the central limit theorem and mean conclusion for $\mathbf{Z}$ reduce to $\mathbf{X}$ and complete the proof of \eqref{lt:eq:balanced-mixture-F}.

\bibliographystyle{plainnat}
\bibliography{hotelling_F}

@book{Anderson2003,
  author = {Anderson, Theodore W.},
  title = {An Introduction to Multivariate Statistical Analysis},
  publisher = {Wiley},
  address = {Hoboken, NJ},
  year = {2003}
}

@book{Bingham1989,
  author = {Bingham, Nicholas H. and Goldie, Charles M. and Teugels, Jozef L.},
  title = {Regular Variation},
  publisher = {Cambridge University Press},
  address = {Cambridge},
  year = {1989}
}

@article{GineGotze2004,
  author = {Gin{\'e}, Evarist and G{\"o}tze, Friedrich},
  title = {On standard normal convergence of the multivariate {Student} {$t$}-statistic for symmetric random vectors},
  journal = {Electronic Communications in Probability},
  volume = {9},
  pages = {162--171},
  year = {2004},
}

@article{GGM1997,
  author = {Gin{\'e}, Evarist and G{\"o}tze, Friedrich and Mason, David M.},
  title = {When is the {Student} {$t$}-statistic asymptotically standard normal?},
  journal = {The Annals of Probability},
  volume = {25},
  number = {3},
  pages = {1514--1531},
  year = {1997},
}

@article{HeinyParolya2024,
  author = {Heiny, Johannes and Parolya, Nestor},
  title = {Log determinant of large correlation matrices under infinite fourth moment},
  journal = {Annales de l'Institut Henri Poincar\'{e} (B) Probability and Statistics},
  volume = {60},
  number = {2},
  pages = {1048--1076},
  year = {2024},
}

@article{Hotelling1931,
  author = {Hotelling, Harold},
  title = {The generalization of {Student}'s ratio},
  journal = {The Annals of Mathematical Statistics},
  volume = {2},
  number = {3},
  pages = {360--378},
  year = {1931}
}

@misc{LiLiuXieZhou2026,
  author = {Li, Yanpeng and Liu, Zhi and Xie, Jiahui and Zhou, Wang},
  title = {The logarithmic law of sample correlation matrices},
  year = {2026},
  note = {arXiv preprint arXiv:2603.19800},
  eprint = {2603.19800},
  archivePrefix = {arXiv},
  primaryClass = {math.PR}
}

@misc{Li2024clt,
  author = {Li, Yanpeng and Pan, Guangming and Xie, Jiahui and Zhou, Wang},
  title = {Necessary and sufficient condition for {CLT} of linear spectral statistics of sample correlation matrices},
  year = {2024},
  note = {arXiv preprint arXiv:2409.12536},
  eprint = {2409.12536},
  archivePrefix = {arXiv},
  primaryClass = {math.PR}
}

@article{PanZhou2011,
  author = {Pan, Guangming and Zhou, Wang},
  title = {Central limit theorem for {Hotelling}'s {$T^2$} statistic under large dimension},
  journal = {The Annals of Applied Probability},
  volume = {21},
  number = {5},
  pages = {1860--1910},
  year = {2011},
}

@article{RudelsonVershynin2013,
  author = {Rudelson, Mark and Vershynin, Roman},
  title = {{Hanson--Wright} inequality and sub-gaussian concentration},
  journal = {Electronic Communications in Probability},
  volume = {18},
  number = {82},
  pages = {1--9},
  year = {2013},
}

@article{Tikhomirov2016,
  author = {Tikhomirov, Konstantin E.},
  title = {The smallest singular value of random rectangular matrices with no moment assumptions on entries},
  journal = {Israel Journal of Mathematics},
  volume = {212},
  number = {1},
  pages = {289--314},
  year = {2016},
}

@article{auffinger2009poisson,
  author = {Auffinger, Antonio and Ben Arous, G{\'e}rard and P{\'e}ch{\'e}, Sandrine},
  title = {Poisson convergence for the largest eigenvalues of heavy tailed random matrices},
  journal = {Annales de l'Institut Henri Poincar\'{e} (B) Probability and Statistics},
  volume = {45},
  number = {3},
  pages = {589--610},
  year = {2009},
}

@misc{DabagiaFernandez2025,
  author        = {Dabagia, Max and Fernandez, Manuel},
  title         = {The smallest singular value of inhomogeneous random rectangular matrices},
  year          = {2025},
  note          = {arXiv preprint arXiv:2408.14389v2},
  eprint        = {2408.14389},
  archivePrefix = {arXiv},
  primaryClass  = {math.PR},
}

@book{Durrett2019,
  author    = {Durrett, Rick},
  title     = {Probability: Theory and Examples},
  edition   = {5},
  series    = {Cambridge Series in Statistical and Probabilistic Mathematics},
  volume    = {49},
  publisher = {Cambridge University Press},
  address   = {Cambridge},
  year      = {2019}
}

@article{lindeberg1922neue,
  title={Eine neue Herleitung des Exponentialgesetzes in der Wahrscheinlichkeitsrechnung},
  author={Lindeberg, Jarl Waldemar},
  journal={Mathematische Zeitschrift},
  volume={15},
  number={1},
  pages={211--225},
  year={1922},
  publisher={Springer}
}

@article{albrecher2007asymptotic,
  title={Asymptotic analysis of a measure of variation},
  author={Albrecher, Hansj{\"o}rg and Teugels, Jef},
  journal={Theory of Probability and Mathematical Statistics},
  volume={74},
  pages={1--10},
  year={2007}
}
\end{document}